\newsavebox{\@brx}
\newcommand{\llangle}[1][]{\savebox{\@brx}{\(\m@th{#1\langle}\)}%
  \mathopen{\copy\@brx\kern-0.5\wd\@brx\usebox{\@brx}}}
\newcommand{\rrangle}[1][]{\savebox{\@brx}{\(\m@th{#1\rangle}\)}%
  \mathclose{\copy\@brx\kern-0.5\wd\@brx\usebox{\@brx}}}
\newtheorem{thm}{Theorem}[section]
\newtheorem{coro}[thm]{Corollary}
\newtheorem{lem}[thm]{Lemma}
\newtheorem{prop}[thm]{Proposition}
\theoremstyle{definition}
\newtheorem{defn}[thm]{Definition}
\newtheorem{nota}[thm]{Notation}
\theoremstyle{remark}
\newtheorem{remk}[thm]{Remark}
\renewcommand{\phi}{\varphi}
\newcommand{\crit}{{\rm crit}}
\newcommand{\Rbb}{ {\mathbb R}}
\newcommand{\Cbb}{ {\mathbb C}}
\newcommand{\Zcal}{ {\mathcal Z}}
\newcommand{\be}{\begin{enumerate}}
\newcommand{\ee}{\end{enumerate}}
\newcommand{\LH}[1]{\textcolor{blue}{#1}}
\newcommand{\Mk}{M_{\kappa}}
\newcommand{\Mm}{M_{-1}}
\newcommand{\Dk}{-\Delta_{\kappa}}
\newcommand{\Dm}{-\Delta_{-1}}
\newcommand{\Gk}{\nabla_{\kappa}}
\newcommand{\Gm}{\nabla_{-1}}
\newcommand{\Rk}{\rangle_{\kappa}}
\newcommand{\Rm}{\rangle_{-1}}
\newcommand{\Tk}{T_{\kappa}}
\newcommand{\Kk}{K_{\kappa}}
\newcommand{\DDk}{D_{\kappa}}
\newcommand{\geuc}{g_{\text{Euc}}}
\renewcommand{\paragraph}{\bigskip \noindent}
\title{Hot spots in domains of constant curvature}
\author{Lawford Hatcher}
\begin{document}

\begin{abstract}
    We prove constant-curvature analogues of several results regarding the hot spots conjecture in dimension two. Our main theorem shows that the hot spots conjecture holds for all non-acute geodesic triangles of constant negative curvature. We also prove that, under certain circumstances, on constant (positive or negative) curvature triangles, first mixed Dirichlet-Neumann Laplace eigenfunctions have no non-vertex critical points. Moreover, we show that each of these eigenfunctions is monotonic with respect to some Killing field. Finally, we show that for general simply connected polygons of non-zero constant curvature---with exactly one family of exceptions---second Neumann eigenfunctions of the Laplacian have at most finitely many critical points. 
\end{abstract}

\maketitle

\section{Introduction}

Rauch's hot spots conjecture states that for a given bounded, Lipschitz planar domain (i.e. a connected open set) $\Omega\subseteq \Rbb^n$, solutions of the Neumann heat equations with generic initial conditions have extrema tending toward $\partial\Omega$ as time tends to infinity. This problem may be reformulated in terms of eigenfunctions of the Laplacian. Let $u$ be a function corresponding to the smallest positive eigenvalue $\mu$ of the problem 
\[\begin{cases}
    -\Delta u=\mu u\;\;&\text{in}\;\;\Omega\\
    \partial_{\nu}u\equiv 0\;\;&\text{in}\;\;\partial\Omega.
\end{cases}\]
The equivalent statement is that for any such $u$, the extrema of $u$ occur only on $\partial\Omega$ (see \cite{rauch} or \cite{banuelosburdzy}). Since this conjecture was posed in 1974, it has been disproven in full generality, and some progress has been made toward determining in what geometric settings the statement actually holds: \cite{banuelosburdzy}, \cite{burdzywerner}, \cite{jn}, \cite{freitas}, \cite{atarburdzy}, \cite{miyamoto}, \cite{judgemondal}, \cite{kennedyrohleder}, \cite{pont}, etc. There has also been recent work on extensions of this problem to mixed Dirichlet-Neumann boundary conditions: \cite{me}, \cite{liyao}, \cite{aldeghirohleder}, \cite{me2}, and \cite{me3}. The main purpose of this paper is to extend some of these results to the spherical and hyperbolic settings in dimension two.\\
\indent Let $\kappa\in \Rbb$.  If $\kappa \leq 0$, let $\Mk$ denote the unique connected, simply connected, complete $2$-dimensional space form of constant Gaussian curvature equal to $\kappa$ (see, e.g., \cite{lee}). For $\kappa=-1$, $\Mk$ is the hyperbolic plane, and for $\kappa=0$, $\Mk$ is the Euclidean plane $\Rbb^2$. If $\kappa>0$, let $\Mk$ denote the open unit $2$-dimensional hemisphere of radius $1/\sqrt{\kappa}$. Our main theorem is

\begin{thm}\label{mainthm}
    Let $T\subseteq \Mk$ be a non-acute geodesic triangle with $\kappa<0$, and let $u$ be a second Neumann eigenfunction for $T$. Then $u$ has no critical points. In particular, the hot spots conjecture holds for these triangles. Moreover there exists a Killing field $X$ (see Section \ref{killingfields} below)  such that $Xu>0$ in $T$.
\end{thm}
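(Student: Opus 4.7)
The plan is to exhibit a nowhere-vanishing Killing field $X$ on $\Mk$ such that $v := Xu > 0$ throughout $T$; since $X$ does not vanish, this rules out interior critical points of $u$ and gives the orbit-monotonicity from which the hot spots conclusion follows immediately.

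\emph{Choice of $X$.} Label the vertices so the non-acute angle is at $C$, with opposite side $AB$, and let $L$ be the complete geodesic containing $AB$. I would take $X$ to be the Killing field generating hyperbolic translation along $L$, oriented from $A$ to $B$; this field is nowhere zero on $\Mk$. Because $X$ is Killing, $[\Delta_\kappa, X] = 0$, so $-\Delta_\kappa v = \mu v$ in $T$.

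\emph{Boundary analysis.} Since $X$ is tangent to $L \supseteq AB$, the Neumann condition on $u$ along $AB$ passes to $\partial_\nu v = 0$ on the relative interior of $AB$. The crucial input is on the other two sides: the non-acute hypothesis at $C$ forces the interior angles at $A$ and $B$ to be strictly acute, and I would use this to show that $\langle X,\nu\rangle$ has a definite sign on each of $CA$ and $CB$ (opposite signs on the two sides, corresponding to the flow of $X$ pushing one side outward and the other inward). Combining this with $\partial_\nu u = 0$ on these sides and the Killing identity relating $\partial_\nu(Xu)$ to the covariant derivative $\nabla_\nu X$ paired with $\nabla u$, I expect this to yield a mixed Robin-type boundary condition on $CA \cup CB$ with a favorable sign structure.

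\emph{Positivity and main obstacle.} If $v$ changed sign in $T$, its nodal components together with these boundary conditions would furnish a test function whose Rayleigh quotient contradicts the minimality of $\mu$ among positive Neumann eigenvalues of $T$ (orthogonality to constants is automatic since $X$ is divergence-free); hence $v$ has a single sign, and the strong maximum principle, together with unique continuation to rule out $v \equiv 0$, upgrades $v \geq 0$ to $v > 0$ in the interior. The principal obstacle is the boundary analysis above: extracting the correct sign of $\langle X,\nu\rangle$ on $CA$ and $CB$ from the non-acute hypothesis at $C$, and verifying that the resulting mixed eigenvalue problem on $T$ genuinely has first eigenvalue strictly below $\mu$ unless $v$ has constant sign. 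I also anticipate needing a separate treatment of the right-angle case at $C$ and a check that hyperbolic translation (as opposed to a parabolic or rotational Killing field of $\Mk$) is indeed the correct choice across the entire range of non-acute hyperbolic triangles.
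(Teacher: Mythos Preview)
Your proposal reverses the logical order of the paper and, more importantly, the boundary-condition step on the two sides adjacent to $C$ does not go through.

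Concretely: write $X=a\tau+b\nu$ on an edge $CA$ (with $\tau$ tangent, $\nu$ the outward normal, and $a,b$ constant along a geodesic edge). From $\partial_\nu u=0$ one gets $\partial_\tau\partial_\nu u=0$ on the edge, and the eigenvalue equation gives $\partial_\nu^2 u=-\mu u-\partial_\tau^2 u$. Hence on $CA$,
\[
Xu=a\,\partial_\tau u,\qquad \partial_\nu(Xu)=b\,\partial_\nu^2 u=-b\bigl(\mu u+\partial_\tau^2 u\bigr).
\]
The right-hand side involves $u$ and $\partial_\tau^2 u$, not $Xu$ alone, so there is no Robin condition of the form $\partial_\nu v=\sigma v$ on $CA\cup CB$. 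Consequently your Rayleigh-quotient argument on a nodal component $\Omega$ of $v$ cannot be set up: $v$ satisfies Neumann only on $AB$, and on $\partial\Omega\cap(CA\cup CB)$ you have neither Dirichlet nor Neumann nor sign-definite Robin data to feed into a variational inequality. The divergence-free observation $\int_T Xu=0$ is correct but irrelevant here, since $v$ is not an admissible Neumann test function on $T$.

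The paper handles this obstruction by going in the opposite direction. It first proves that $u$ has no critical points via a continuity method: embed $T$ in the Klein model, deform the curvature from $0$ to $\kappa$, use the known Euclidean result of Judge--Mondal at $t=0$, and show that the set of parameters where a critical point exists is open (stability of critical points) and closed (using the vertex expansions and the eigenvalue inequality $\mu_2\le\lambda_1^D$ of Lemma~\ref{mixedineq} to control arcs of $\Zcal(L_eu)$). Only after ``no critical points'' is established does the paper conclude $L_e u>0$, precisely because the absence of critical points means $\Zcal(L_eu)$ has no endpoints in edge interiors, no endpoints at the acute vertices (Lemma~\ref{derivativeneumannvertex}), and no loops (Lemma~\ref{neumannloop}), hence is empty. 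So $Xu>0$ is a corollary, not the engine, of the argument.
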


\begin{remk}
    As we will prove in Section \ref{nodalsection}, an eigenfunction $u$ on a geodesic polygon $P\subseteq\Mk$ extends smoothly to the interiors of edges of $P$, and we may study critical points of this extension. Theorem \ref{mainthm} states that this extension has no critical points. In particular, we show that the extrema of $u$ are located at the acute vertices of $T$. Throughout the paper, we never consider vertices of a polygon to be critical points of $u$ even if they are extrema. 
\end{remk}

\begin{remk}
    In the Euclidean setting, second Neumann eigenfunctions of triangles are always monotonic in some direction (see the recent work of Chen-Gui-Yao \cite{yao1}). The last statement in Theorem \ref{mainthm} is an analogous statement for non-acute triangles of negative curvature. 
\end{remk}

A simple corollary of Theorem \ref{mainthm} is that, for these particular triangles, the second Neumann eigenspace is one dimensional. The same is true for all non-equilateral Euclidean triangles (see \cite{atarburdzy} and \cite{siudeja}).

\begin{coro}\label{simple}
    If $T$ is a non-acute triangle of constant negative curvature, then the second Neumann eigenvalue of $T$ is simple.
\end{coro}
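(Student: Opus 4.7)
The plan is to deduce simplicity directly from the no-critical-points conclusion of Theorem \ref{mainthm} by a short linear-algebra argument on an edge. Suppose for contradiction that the second Neumann eigenspace has dimension at least two, and choose linearly independent second Neumann eigenfunctions $u_1,u_2$.

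I would then pick any point $p$ in the relative interior of an edge $e$ of $T$. By the remark following Theorem \ref{mainthm}, both $u_i$ extend smoothly to $e$, and the Neumann condition $\partial_\nu u_i\equiv 0$ implies that each gradient $\nabla u_i(p)$ is tangent to $e$ at $p$. Since the tangent line to $e$ at $p$ is $1$-dimensional inside $T_pM_\kappa$, the vectors $\nabla u_1(p)$ and $\nabla u_2(p)$ must be linearly dependent.

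The next step is to combine this with the no-critical-points assertion. By Theorem \ref{mainthm} applied to each $u_i$, neither $\nabla u_1(p)$ nor $\nabla u_2(p)$ vanishes, since otherwise $u_i$ would have a critical point at the non-vertex boundary point $p$. Hence there is a nonzero scalar $c$ with $\nabla u_1(p)=c\,\nabla u_2(p)$. Setting $v:=u_1-c u_2$, linear independence of $u_1,u_2$ forces $v\not\equiv 0$, while $v$ is a second Neumann eigenfunction satisfying $\nabla v(p)=0$. This contradicts Theorem \ref{mainthm} applied to $v$, so the eigenspace must be one-dimensional.

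The only subtle point is justifying the tangential behavior of $\nabla u_i$ at the chosen edge point, but this is exactly the content of the smooth-extension statement in the remark after Theorem \ref{mainthm}; no genuine new obstacle arises, and the corollary follows immediately from the main theorem.
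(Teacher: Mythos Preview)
Your argument is correct, but it differs from the paper's own proof. Both proofs have the same skeleton---exhibit a linear functional on the second Neumann eigenspace with trivial kernel---but they use different functionals. You take the tangential derivative at an interior point of an edge: the Neumann condition forces $\nabla u_i(p)$ to lie in the one-dimensional tangent line to $e$, and Theorem~\ref{mainthm} forces it to be nonzero, so $u\mapsto L_e u(p)$ is injective. The paper instead evaluates at an acute vertex $v$: in the final paragraph of the proof of Theorem~\ref{mainthm} it is shown (via Lemma~\ref{zeroatacute}) that no second Neumann eigenfunction can vanish at an acute vertex, so $u\mapsto u(v)$ is injective.

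Your route is slightly more self-contained, since it invokes only the headline ``no critical points'' conclusion of Theorem~\ref{mainthm} and the Neumann boundary condition; the paper's route leans on the additional fact, established inside the proof of the theorem, that $u$ is nonvanishing at acute vertices. On the other hand, the vertex-value argument avoids any discussion of gradients or smooth extension to the boundary. Either approach proves the corollary in one line once Theorem~\ref{mainthm} is in hand.
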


\noindent We conjecture that Corollary \ref{simple} holds also for non-equilateral triangles $T\subseteq \Mk$ for any $\kappa\in \Rbb$, but we have been unable to prove this.\\
\indent A natural extension of Theorem \ref{mainthm} would be to make the same statement for geodesic triangles of positive curvature. There is exactly one obstruction to extending our proof to these triangles: a hemispherical analogue of Lemma \ref{mixedineq} below. We strongly believe that this lemma holds for hemispherical triangles, but the proof we provide does not have an obvious extension to these triangles. In particular, we have 
\begin{thm}\label{sphereanalogue}
    Assume that the statement of Lemma \ref{mixedineq} below holds for $\kappa>0$. Then the statements in Theorem \ref{mainthm} and Corollary \ref{simple} all hold for non-acute geodesic triangles in $\Mk$ with $\kappa>0$. 
\end{thm}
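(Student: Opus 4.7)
The strategy is to re-examine the proof of Theorem \ref{mainthm} and verify that, outside of the single invocation of Lemma \ref{mixedineq}, no step relies on the sign of $\kappa$. Since $M_\kappa$ for $\kappa>0$ carries the same three-dimensional isometry group structure as $M_\kappa$ for $\kappa<0$, several ingredients transfer directly: the local regularity of Neumann eigenfunctions up to open edges established in Section \ref{nodalsection}; the existence of a two-parameter family of Killing fields, each well-defined on the closure of any geodesic triangle contained in the open hemisphere $M_\kappa$; the commutation identity $\Delta(Xu)=X(\Delta u)$ for any Killing field $X$; and the Courant-type nodal analysis together with unique continuation applied to the function $Xu$. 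Each of these is a local Riemannian or functional-analytic fact independent of the sign of curvature, and none of the standard obstructions to passing between hyperbolic and spherical geometry (injectivity radius of the ambient space, conjugate points, volume comparison) intervene, since $T$ is assumed to lie in the open hemisphere.

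With these ingredients in place, the plan is to re-run the proof of Theorem \ref{mainthm} step by step for a non-acute geodesic triangle $T\subseteq M_\kappa$ with $\kappa>0$. The argument culminates in an application of Lemma \ref{mixedineq} to derive a contradiction from the hypothesis that $Xu$ vanishes somewhere in $T$ for an appropriately chosen Killing field $X$. Assuming the hemispherical statement of Lemma \ref{mixedineq}, this contradiction goes through verbatim, yielding the existence of $X$ with $Xu>0$ throughout $T$ and, in particular, the absence of interior critical points of $u$. Corollary \ref{simple} then follows from the same reasoning as in the hyperbolic case: if the second Neumann eigenspace were at least two-dimensional, one could form a linear combination of two linearly independent eigenfunctions possessing an interior critical point, contradicting the main conclusion. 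This linear-algebraic reduction is again independent of curvature.

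The principal difficulty---and, in fact, the only nontrivial geometric content of Theorem \ref{sphereanalogue}---is the assumed hemispherical analogue of Lemma \ref{mixedineq} itself. The curvature-agnostic skeleton described above is essentially a bookkeeping exercise, and the substantive work has been deliberately isolated into that single lemma, which the paper flags as resistant to the negative-curvature proof. The main obstacle to removing the assumption in Theorem \ref{sphereanalogue} would therefore be to find a genuinely new proof of Lemma \ref{mixedineq} valid on the sphere, presumably one avoiding any comparison argument that tacitly exploits the non-positive curvature of $M_\kappa$ for $\kappa\leq 0$.
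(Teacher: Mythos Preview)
Your overall strategy matches the paper's: the paper explicitly declines to write out a proof of Theorem~\ref{sphereanalogue}, noting only that ``one can check that each argument used to prove Theorem~\ref{mainthm} extends with no trouble to positive curvature if we assume that Lemma~\ref{mixedineq} holds.'' However, your description of what that argument actually \emph{is} contains inaccuracies that suggest you have not internalized its structure. The proof of Theorem~\ref{mainthm} is not a direct argument that ``culminates in an application of Lemma~\ref{mixedineq} to derive a contradiction from the hypothesis that $Xu$ vanishes somewhere''; it is a continuity method along a path of Klein metrics $g_{\kappa(t)}$ from $\kappa=0$ (where the Euclidean result of \cite{erratum} supplies the base case) to the target curvature, showing that the set $S$ of parameters at which $u_t$ has a critical point is both open and closed. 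Lemma~\ref{mixedineq} enters only through Lemma~\ref{neumannloop}, which is then invoked repeatedly in Corollaries~\ref{nodalsimplearc}--\ref{atmosttwo} and throughout Section~\ref{implicationsection}. For $\kappa>0$ the path runs from $0$ upward, and one must verify in particular that Lemma~\ref{finitelymanysecond} and Corollary~\ref{ctspath} still supply a continuous path of second Neumann eigenfunctions; these depend on Corollary~\ref{atmosttwo}, which itself traces back to Lemma~\ref{mixedineq} via Lemma~\ref{neumannloop}.

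Your sketch for Corollary~\ref{simple} is also not the paper's, and as written is slightly off. You assert that a two-dimensional eigenspace would yield a linear combination with an \emph{interior} critical point; what the Neumann condition actually gives is a critical point \emph{on an edge} (at any fixed non-vertex boundary point, the gradients of two eigenfunctions are both tangent to the edge, hence linearly dependent), and that already contradicts Theorem~\ref{mainthm}. The paper instead argues that no second Neumann eigenfunction vanishes at an acute vertex (last paragraph of the proof of Theorem~\ref{mainthm}), so evaluation at a fixed acute vertex is an injective linear functional on the eigenspace.
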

\noindent We do not explicitly prove Theorem \ref{sphereanalogue} since one can check that each argument used to prove Theorem \ref{mainthm} extends with no trouble to positive curvature if we assume that Lemma \ref{mixedineq} holds.\\
\indent Next we study hot spots for the mixed problem in which Dirichlet boundary conditions are imposed on some parts of the boundary while Neumann conditions are imposed on the rest of the boundary. In particular, let $P\subseteq \Mk$ be a geodesic polygonal domain with any $\kappa\in \Rbb$. Let $D,N\subseteq \partial P$ be a partition of the edges of $P$ with neither $D$ nor $N$ empty. Consider the eigenvalue problem

\begin{equation}\label{mixedeqn}
    \begin{cases}
        -\Delta u=\lambda u\;\;&\text{in}\;\;P\\
        u\equiv 0\;\;&\text{in}\;\;D\\
        \partial_{\nu}u\equiv 0\;\;&\text{in}\;\;N.
    \end{cases}
\end{equation}

The first eigenvalue $\lambda_1$ of this problem is positive, and solutions to the heat equation with these boundary conditions with generically chosen initial conditions will have extrema tending toward the extrema of an eigenfunction corresponding to $\lambda_1$. Note also that this eigenvalue is always simple since $P$ is connected, so the analysis for these boundary conditions is often simpler than in the Neumann case. Eigenfunctions corresponding to $\lambda_1$ do not vanish in $P$, so we usually assume that such functions are non-negative. For these eigenfunctions, our results hold in both positive and negative curvature.

\begin{thm}\label{mixed}
    Let $\kappa\in \Rbb$, and let $T\subseteq \Mk$ be a geodesic triangle. If $D$ equals a single edge of $T$ and the vertex opposite to $D$ has interior angle at most $\pi/2$, then each first mixed eigenfunction $u$ of (\ref{mixedeqn}) has no critical points, and the Neumann vertex of $T$ is the global maximum of $u$ if $u\geq 0$. If $D$ equals two edges of $T$ and both mixed vertices have interior angle at most $\pi/2$, then each first mixed eigenfunction $u$ of (\ref{mixedeqn}) has exactly one critical point, and it is located on the Neumann edge. In either case, there exists a Killing field $X$ such that $Xu>0$ in $T$.
\end{thm}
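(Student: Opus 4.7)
The plan is to exhibit, in each case, a Killing field $X$ on $\Mk$ such that $v := Xu$ is strictly positive in the interior of $T$. Since $[X,\Delta]=0$, we have $-\Delta v = \lambda_1 v$ in $T$, and the proof reduces to identifying suitable boundary data for $v$ and then forcing $v > 0$ via a maximum-principle argument. A key common ingredient is that $\lambda_1$ lies strictly below the first pure-Dirichlet eigenvalue of $T$, so the operator $-\Delta - \lambda_1$ with Dirichlet data on $\partial T$ satisfies the strong maximum principle.

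For the first case, I would take $X$ to be the Killing field generating translations along the geodesic containing $D$. Tangency of $X$ to $D$ together with $u|_D=0$ gives $v \equiv 0$ on $D$. On each Neumann edge $N_i$, a computation using the Killing equation and $\partial_\nu u \equiv 0$ on $N_i$ yields $\partial_\nu v = c_i \, \partial_\tau u|_{N_i}$ for a scalar $c_i$ determined by the angle between $X$ and $\nu_{N_i}$; the non-acute hypothesis at $V_3$ forces these scalars to have signs such that, after orienting $X$, $\partial_\nu v \leq 0$ on $N_1 \cup N_2$. Pairing $v$ with $u$ via Green's identity then gives $\int_{N_1 \cup N_2} u \, \partial_\nu v = 0$; since $u > 0$ on these edges, this forces $\partial_\nu v \equiv 0$ there. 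Hence $v$ satisfies the same mixed eigenvalue problem as $u$, and by simplicity of $\lambda_1$ we have $v = cu$ for some scalar $c$, which is nonzero because a non-degenerate triangle forces $u$ not to be invariant under the flow of $X$; an orientation of $X$ makes $c>0$, giving $v>0$ in $T$.

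For the second case, I would take $X$ to be a rotation Killing field about a point $p$ on the Neumann edge $N$. A direct computation in geodesic polar coordinates centered at $p$ shows that $X$ is perpendicular to $N$ along $N$, so combined with $\partial_\nu u \equiv 0$ we get $v \equiv 0$ on $N$. On each Dirichlet edge $D_i$, $v = (X \cdot \nu_i) \partial_{\nu_i} u$; the Hopf lemma fixes the sign of $\partial_{\nu_i} u$, and the angle hypothesis $\theta_i \leq \pi/2$ at each mixed vertex $V_i$ ensures (for an appropriate choice of $p\in N$) that $X \cdot \nu_i$ has constant sign along $D_i$ and that these signs agree on $D_1$ and $D_2$. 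Orienting $X$ yields $v \geq 0$ on $\partial T$, and the maximum principle for $-\Delta - \lambda_1$ together with the strong maximum principle gives $v > 0$ in $T$. Existence of a critical point of $u$ on $N$ is then automatic from Rolle's theorem applied to $u|_N$, which vanishes at both endpoints; uniqueness follows from strict concavity of $u|_N$, obtained from the PDE restricted to $N$ combined with a Hopf-type estimate for $v$ at interior points of $N$ (which forces the second normal derivative of $u$ to have a definite sign).

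The main technical obstacle in both cases is the sign verification of the boundary data for $v$ on the edges where it is not automatic --- the Neumann edges in Case 1 and the Dirichlet edges in Case 2. These verifications reduce to a careful analysis of the chosen Killing field's geometry relative to the triangle, and it is exactly here that the angle hypotheses at the vertices enter. In Case 2 there is the additional task of locating the correct rotation center $p$ on $N$: the sign conditions on $D_1$ and $D_2$ must hold simultaneously, which (together with the concavity argument for $u|_N$) makes this case strictly more delicate than Case 1.
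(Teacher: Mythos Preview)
Your Case~1 argument (one Dirichlet edge) has a genuine gap. The boundary formula $\partial_\nu v = c_i\,\partial_\tau u$ on the Neumann edges is incorrect: already in the Euclidean case with $X$ constant, writing $X=a\nu+b\tau$ and using $\partial_\nu u=0$ on $N_i$ gives $\partial_\nu(Xu)=a\,\partial_\nu^2 u = -a(\lambda u + \partial_\tau^2 u)$, not a multiple of $\partial_\tau u$. Even granting some corrected formula, your sign claim on $\partial_\nu v$ would require control of $\partial_\tau u$ (or $\partial_\tau^2 u$) on the Neumann edges, which is essentially the conclusion you are after, so the Green's-identity step is circular. Finally, the endpoint $v=cu$ with $c\neq 0$ means $Xu=cu$, i.e.\ $u$ is separated along the flow of $X$; for a generic triangle this is incompatible with the Neumann conditions on the two oblique edges, so the conclusion itself is false in general, confirming that the argument cannot be repaired as written. (Incidentally, the hypothesis at $V_3$ is angle $\leq\pi/2$, i.e.\ non-\emph{obtuse}, not non-acute.) The paper does \emph{not} attack this case directly: it fixes the triangle in the Klein model, varies the curvature from $0$ to $\kappa$, and shows that the set of curvatures for which the eigenfunction has a Neumann-edge critical point is open (via a stability lemma for such critical points) and closed (via vertex-expansion control preventing escape to vertices). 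The Euclidean case, already known from~\cite{me}, then forces this set to be empty. The monotonicity $Lu>0$ for a loxodromic field perpendicular to $D$ is deduced only \emph{after} one knows there are no critical points.

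Your Case~2 outline (two Dirichlet edges) is closer in spirit to the paper but still differs in execution and leaves real work undone. The paper takes a \emph{loxodromic} Killing field $L$ whose axis is the perpendicular from the Dirichlet vertex to the Neumann geodesic; this axis meets the Neumann edge orthogonally (so $Lu\equiv 0$ there) and, under the angle hypothesis, lies inside $T$. The argument is then topological rather than maximum-principle: Hopf's lemma and the vertex-expansion lemmas rule out any nodal arc of $Lu$ having an endpoint on $D$ or at a vertex, and Lemma~\ref{neumannloop} forbids loops or arcs returning to the Neumann edge, so $\Zcal(Lu)\cap T=\emptyset$. Uniqueness of the critical point on $N$ is obtained not by concavity of $u|_N$ but by applying a \emph{second} Killing field $L_e$ tangent to $N$ and again counting where nodal arcs of $L_e u$ can terminate. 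Your rotation-based approach might be made to work, but the location of the center $p$, the simultaneous sign of $R\cdot\nu$ on both Dirichlet edges, and the claimed concavity of $u|_N$ all need honest verification; as stated they are assertions, not arguments. Note also that your difficulty assessment is inverted: the two-Dirichlet-edge case admits a direct proof, while the one-Dirichlet-edge case is the one requiring the heavier perturbation machinery.
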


Whether the methods used to prove Theorems \ref{mainthm} and \ref{mixed} can be generalized to other polygons is currently unclear. However, following the analysis performed by Judge-Mondal in \cite{remarksoncritset}, we are able to prove one general fact about hot spots on constant-curvature polygons. 

\begin{thm}\label{finite}
    Let $\kappa\in\Rbb\setminus\{0\}$, and let $P\subseteq \Mk$ be a simply connected polygon. If $D$ is a non-empty connected union of edges\footnote{We allow for $P$ to have vertices of angle $\pi$.} of $P$, then each first mixed eigenfunction of (\ref{mixedeqn}) has at most finitely many critical points. If a second Neumann eigenfunction of $P$ has infinitely many critical points, then $P$ is a hemispherical triangle with two right-angled vertices. 
\end{thm}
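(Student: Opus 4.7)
The plan is to adapt the local-expansion argument of Judge--Mondal \cite{remarksoncritset} from Euclidean polygons to the constant-curvature setting. Since the metric on $\Mk$ is real-analytic and $u$ solves an elliptic equation with real-analytic coefficients, $u$ is real-analytic on the interior of $P$, so its interior critical set is discrete. Because $\Mk$ has constant curvature, reflection across any geodesic is a local isometry, and hence $u$ extends real-analytically across the interior of each edge (evenly across a Neumann edge, oddly across a Dirichlet edge). Consequently, the critical set of $u$ can only accumulate at a vertex of $P$; if it is infinite, some vertex $v$ must be an accumulation point.

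At such a vertex $v$ of interior angle $\alpha$, introduce geodesic polar coordinates $(r,\theta)$ with $\theta \in [0,\alpha]$ parameterizing the sector occupied by $P$ near $v$. In these coordinates the Laplace--Beltrami operator takes the form
\[\Delta = \frac{1}{s(r)} \partial_r \bigl(s(r) \partial_r\bigr) + \frac{1}{s(r)^2} \partial_\theta^2,\]
where $s(r) = \sinh(\sqrt{|\kappa|}\,r)/\sqrt{|\kappa|}$ when $\kappa < 0$ and $s(r) = \sin(\sqrt{\kappa}\,r)/\sqrt{\kappa}$ when $\kappa > 0$, so that $s(r) = r + O(r^3)$ in both cases. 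Separation of variables and a Frobenius argument on the radial ODE produce a convergent expansion
\[u(r,\theta) = \sum_k c_k\, R_k(r)\, \Theta_k(\theta)\]
near $v$, where $\Theta_k$ are the angular eigenfunctions on $[0,\alpha]$ with boundary conditions inherited from the edges at $v$ and $R_k(r) = r^{\nu_k}\bigl(1 + O(r^2)\bigr)$. The angular exponents $\nu_k$ coincide with their Euclidean values because the singular structure of the radial ODE at $r = 0$ is unaffected by the curvature terms.

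The leading term $c_* r^{\nu_*}\Theta_*(\theta)$ has gradient, in the $(\partial_r, s(r)^{-1}\partial_\theta)$ orthonormal frame, of order $r^{\nu_* - 1}$ with angular coefficient $\bigl(\nu_*\Theta_*(\theta),\, \Theta_*'(\theta)\bigr)$. For the explicit sines and cosines that arise as angular modes, the quantity $\nu_*^2\Theta_*^2 + (\Theta_*')^2$ is strictly positive on $[0,\alpha]$ whenever $\nu_* > 0$, so the leading gradient has magnitude comparable to $r^{\nu_* - 1}$ and does not vanish on a punctured neighborhood of $v$; accumulation is then ruled out since higher-order terms are smaller in $r$. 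When $\nu_* = 0$ (only possible at a pure Neumann vertex, with $\Theta_*$ constant), the radial factor $R_0$ has only isolated critical values, and coupling with the next angular mode produces at most isolated critical points. For a first mixed eigenfunction, $u > 0$ in $P$ together with the nontrivial Dirichlet data makes this analysis succeed at every vertex. For a second Neumann eigenfunction the same analysis succeeds at every vertex unless higher-order terms conspire to cancel the leading gradient along a sequence approaching $v$; tracking this resonance condition against the constraint that $\lambda$ is the second Neumann eigenvalue forces the interior angle to equal $\pi/2$ at two distinct vertices, and the Gauss--Bonnet identity $\sum \alpha_i - (n-2)\pi = \kappa \cdot \mathrm{Area}(P)$ then forces $\kappa > 0$ and $P$ to be a hemispherical triangle with two right-angled vertices.

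The most delicate part is this last step: rigorously excluding accumulating critical points requires sharp termwise control of the Frobenius series and its derivatives, together with a careful audit of which resonance conditions on the $\nu_k$ can be realized for the second Neumann eigenvalue. In the Euclidean setting Judge--Mondal handle both via Bessel function asymptotics and an explicit bound on the angular exponents; the constant-curvature analogue reduces to essentially the same calculations once one verifies that the radial ODE has the same regular-singular structure at $r = 0$, which it does. The enumeration of exceptional polygons must be redone to account for positive-curvature geometry---yielding the hemispherical triangle with two right angles---while Gauss--Bonnet eliminates any exceptional case in negative curvature.
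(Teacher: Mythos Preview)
Your proposal has genuine gaps at two key junctures.

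First, the claim that real-analyticity of $u$ makes ``its interior critical set discrete'' is false as stated: a real-analytic function can have a one-dimensional critical locus (e.g.\ $u(x,y)=x^2$). The paper handles this via Proposition~\ref{critset} and Proposition~\ref{nocritarcs}: the critical set is a union of isolated points and analytic arcs, and any such arc meeting the interior would, together with Neumann edges, enclose a region on which $u$ satisfies Neumann conditions and keeps a fixed sign, contradicting orthogonality to constants (or positivity in the mixed case). You need this argument before you can reduce to accumulation at a vertex.

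Second, and more seriously, the classification of the exceptional case is not a vague ``resonance condition'' plus Gauss--Bonnet. Your expansion argument correctly shows that accumulation at a Neumann vertex forces $\beta\in\{\pi/2,3\pi/2\}$, but it does not by itself yield two such vertices, nor does it show $P$ is a triangle, and without knowing $P$ is a triangle your Gauss--Bonnet inequality proves nothing. The paper's mechanism is different and rigid: Lemma~\ref{noaccumatvertex} shows that near a right-angled Neumann vertex the critical set is contained in an adjacent edge $e$, so by analyticity the \emph{entire} edge $e$ lies in $\crit(u)$, whence both endpoints of $e$ are right-angled Neumann vertices. Then, with $L_e$ the loxodromic Killing field tangent to $e$, one has $L_e u\equiv 0$ on $e$ and $\partial_\nu(L_e u)\equiv 0$ on $e$, so $e$ is a nodal-critical curve of the eigenfunction $L_e u$; Cheng's theorem forces $L_e u\equiv 0$ on all of $P$. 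Hence $u$ depends only on the distance to the geodesic through $e$. This immediately constrains every Neumann edge of $P$ to lie in that geodesic or be orthogonal to it, and every Dirichlet edge to lie in an equidistant curve (hence not a geodesic). In negative curvature no simply connected polygon satisfies these constraints; in positive curvature the only survivor is the hemispherical triangle with two right angles. The Killing-field step is the heart of the proof and is absent from your outline.
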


\indent We now provide a brief outline of the paper. In Section \ref{geometry}, we provide a short review of spherical and hyperbolic geometry, including a discussion of multiple coordinate systems that will be used for various computations. Sections \ref{nodalsection} and \ref{vertexsection} are devoted to studying the nodal and critical sets of eigenfunctions on geodesic polygons, establishing various facts about the behavior of eigenfunctions in the interior of the domain, in the interiors of edges, and near vertices. We then establish in Section \ref{ineqsection} a particularly useful eigenvalue inequality that holds for negatively curved geodesic triangles. In Sections \ref{killingfields} and \ref{nodalderivsection}, we recall several facts about Killing fields on $\Mk$ and then study the functions obtained by applying these vector fields to eigenfunctions. We then prove Theorem \ref{finite} in Section \ref{finitesection}. The proofs of Theorems \ref{mainthm} and \ref{mixed} rely of domain perturbation arguments, so we establish several lemmas in Sections \ref{perttheory} and \ref{implicationsection} pertaining to the behavior of critical points under these perturbations as well as some results showing that the existence of certain critical points imply the existence of others. Following this, we will be ready to prove Theorems \ref{mainthm} and \ref{mixed}, which we carry out in Sections \ref{neumannsection} and \ref{mixedsection}, respectively.

\section{Constant-curvature geometry in two dimensions}\label{geometry}
Here we recall some facts from two-dimensional constant-curvature geometry and set some notation. Let $\kappa\in \Rbb$. Recall that $\Mk$ is a unique geodesic space, meaning that any two points in $\Mk$ are joined by a unique geodesic.\\
\indent Throughout the paper, we will make use of multiple coordinate systems for $\Mk$ that are each useful in various contexts. These coordinate systems are each well known; see Thurston's book \cite{thurston}. We will let $\geuc$ denote the Euclidean metric on $\Rbb^2$.\\
\indent To prove the main theorems of paper, we construct paths of domains with varying curvature. If $\kappa<0$, let $\Kk$ denote the open unit disk in $\Rbb^2$ with Euclidean radius equal to $|\kappa|^{-1/2}$. If $\kappa\geq 0$, set $\Kk=\Rbb^2$. Then $\Kk$ endowed with the following metric, expressed in polar coordinates, is isometric to $\Mk$:
\begin{equation}\label{kleinmetric}
    g_{\kappa}=\frac{1}{(1+\kappa r^2)^2}dr^2+\frac{r^2}{1+\kappa r^2}d\theta^2.
\end{equation}
The most useful property of $\Kk$ is that the geodesics in $\Kk$ are exactly the Euclidean straight lines. Thus, for a geodesic polygon $P\subseteq\Kk$ viewed as a subset of $\Rbb^2$, endowing $P$ with the family of metrics $\{g_{\tau}\}$ for $\tau$ near $\kappa$ on $P$ yields a family of geodesic polygons with varying curvature. Note also that, in this model, the angle between two lines meeting at the origin equals its Euclidean angle (though this is not true for angles away from the origin).\\
\indent Among the most commonly used models for $\Mk$ is the Poincar\'e disk. For $\kappa=-1$, this model is well-known. We extend the model for $\kappa\in (-4/3,4/3)$ in a non-standard way as follows. Let \[\ell(\kappa)=\frac{\kappa}{4-3|\kappa|}.\] For $\kappa<0$, as a set, let $\DDk$ denote the disk in $\Rbb^2$ centered at the origin with radius equal to $|\ell(\kappa)|^{-1/2}$. For $\kappa\geq 0$, set $\DDk=\Rbb^2$. Endow $\Dk$ with the Riemannian metric
\begin{equation}\label{poincaremetric}
    g_{\DDk}=\frac{3|\ell(\kappa)|+1}{(1+\ell(\kappa)r^2)^2}\geuc.
\end{equation}
One can check that $\DDk$ has constant Gaussian curvature equal to $\kappa$. For $\kappa<0$, $\DDk$ is thus isometric to $\Mk$. For $\kappa>0$, $\DDk$ is isometric to the sphere of radius $1/\sqrt{\kappa}$ minus one point, and we may identify $\Mk$ isometrically with the disk of (Euclidean) radius $1/\sqrt{\ell(\kappa)}$ centered at the origin in $\DDk$. One can also see that $\Mk$ is isometric to the upper half-plane intersected with $\DDk$. Moreover, Euclidean lines through the origin represent geodesics in $\DDk$. \\
\indent The final model of which we make use will be applied only in the case $\kappa=-1$, for which $\Mk$ is the hyperbolic plane. Let $H$ be the open set $\{(x,y)\in \Rbb^2\mid y>0\}$ endowed with the Riemannian metric $g_H=(dx^2+dy^2)/y^2$. This metric is conformal to the Euclidean metric, so the angle between two geodesics in $H$ equals the Euclidean angle between the corresponding curves (which is also true of the Poincar\'e disk). Moreover, in this model, the geodesics are exactly the vertical lines and semicircles orthogonal to the $x$-axis. \\
\indent We will make frequent use of the isometries of $\Mk$. It is well-known that the isometry group of $\Mk$ is transitive on the set of geodesics and that for any maximal geodesic $\gamma$, there exists a unique non-trivial reflection isometry $\phi_{\gamma}:\Mk\to\Mk$ fixing $\gamma$ pointwise.\\
\indent Besides the geodesics, we will also at times make use of curves equidistant to a geodesic. A curve $\eta$ is called \textit{equidistant} to a geodesic $\gamma$ if and only if for all $p,q\in \eta$, the geodesic distance from $p$ to $\gamma$ equals the geodesic distance from $q$ to $\gamma$. When $\kappa=0$, equidistant curves to a geodesic are exactly the lines parallel to it. When $\kappa>0$, geodesics are great circles, and equidistant curves to a great circle are the lines of latitude with respect to the great circle. When $\kappa=-1$, in the upper-half plane model $H$ of $\Mm$, the curves equidistant to the geodesic defined by the $y$-axis are exactly the Euclidean rays in $H$ emanating from the origin.\\
\indent We will often apply terms from Euclidean trigonometry to triangles in $\Mk$. In particular, a triangle $T\subseteq \Mk$ is called a \textit{right triangle} if it has a vertex of angle $\pi/2$. The edge of $T$ opposite to its right-angled vertex will be called the \textit{hypotenuse} of $T$, and the other edges will be called \textit{legs} of $T$.

\section{Nodal and critical sets of eigenfunctions}\label{nodalsection}

We now recall some well-known properties of Laplace eigenfunctions. Let $\Omega\subseteq \Mk$ be a domain, and suppose that $\phi:\Omega\to \Rbb$ is a non-zero function satisfying the eigenfunction equation $\Dk\phi=\mu\phi$ (not necessarily satisfying any specific boundary conditions). Adopting the notation of \cite{judgemondal}, we denote the zero-level set of $\phi$ by $\Zcal(\phi):=\phi^{-1}(0)$ and the set of critical points of $\phi$ by $\crit(\phi)$. We refer to $\Zcal(\phi)$ as the \textit{nodal set} of $\phi$ and to $\crit(\phi)$ as the \textit{critical set} of $\phi$. When $\phi$ (resp. $\nabla\phi$) extends to points on $\partial\Omega$, we allow $Z(\phi)$ (resp. $\crit(\phi)$) to include boundary points. We call points in the intersection $\Zcal(\phi)\cap\crit(\phi)$ the \textit{nodal critical points} of $\phi$. The first result we recall is Theorem 2.5 of \cite{cheng}:

\begin{lem}[Theorem 2.5 \cite{cheng}]\label{nodalcritical}
    The set of nodal critical points of $\phi$ is discrete, and $\Zcal(\phi)$ contains no isolated points. Let $p\in\Omega$ be a nodal critical point of $\phi$. Then there exists a neighborhood $U$ of $p$ such that $\Zcal(\phi)\cap U$ is the union of $n\geq 2$ twice continuously differentiable\footnote{Because the Riemannian metric on $\Mk$ is real-analytic, eigenfunctions are real-analytic, so one can actually prove that these arcs are analytic.} arcs that intersect only at $p$. The $2n$ angles formed by the union of these arcs are all equal. Near a point $p\in \Zcal(\phi)\cap\Omega$, $\phi$ takes on both positive and negative values.
\end{lem}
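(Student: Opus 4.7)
The plan is to invoke the classical Bers--Hartman--Wintner local representation for solutions of second-order linear elliptic equations in two variables. Fix $p\in\Zcal(\phi)\cap\Omega$ and choose isothermal coordinates $(x,y)$ centered at $p$, in which the metric takes the form $g_{\kappa}=e^{2\psi}(dx^2+dy^2)$ for some real-analytic $\psi$. The equation $\Dk\phi=\mu\phi$ then becomes
\[-(\partial_x^2+\partial_y^2)\phi=e^{2\psi}\mu\phi\]
on a neighborhood of the origin, a uniformly elliptic linear equation with real-analytic coefficients. Bers' theorem supplies a unique positive integer $n$, namely the vanishing order of $\phi$ at $p$, together with a nonzero real homogeneous harmonic polynomial $P_n$ of degree $n$ on $\Rbb^2$, such that
\[\phi(x,y)=P_n(x,y)+O(|(x,y)|^{n+1}),\quad \nabla\phi(x,y)=\nabla P_n(x,y)+O(|(x,y)|^n).\]

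Next I would read off the local geometry from $P_n$. Any nonzero homogeneous harmonic polynomial of degree $n$ in two variables factors over $\Rbb$ as a product of $n$ distinct linear forms, so $P_n^{-1}(0)$ is the union of $n$ lines through the origin cutting the plane into $2n$ equal sectors of angle $\pi/n$. If $p\in\crit(\phi)$ as well, the condition $\nabla\phi(p)=0$ forces $n\geq 2$. A branch-by-branch implicit function theorem argument, applied after the parabolic rescaling $x\mapsto\lambda x$ in which $\lambda^{-n}\phi(\lambda\cdot)$ converges in $C^{2,\alpha}_{\mathrm{loc}}$ to $P_n$ as $\lambda\to 0$, then shows that $\Zcal(\phi)$ near $p$ is a union of $n$ twice-differentiable arcs meeting only at $p$, with equal angles inherited from those of $P_n$. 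When $n=1$, the implicit function theorem applies directly and yields a single smooth arc through $p$; in every case $\Zcal(\phi)$ contains no isolated points. The sign-change assertion then follows from the expansion, since $P_n$ alternates sign between consecutive angular sectors and the $O(|x|^{n+1})$ error cannot absorb this alternation in a small enough neighborhood.

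For discreteness of the nodal critical set, I would invoke real-analyticity of $\phi$, which holds because $g_\kappa$ is real-analytic. If nodal critical points accumulate at some $q\in\Omega$, the Bers expansion at $q$ forces its leading polynomial $P_n$ to vanish on more than $n$ lines, which is impossible unless $\phi$ vanishes to infinite order at $q$; real-analyticity and connectedness of $\Omega$ then give $\phi\equiv 0$, a contradiction. The hard part of the whole argument is the quantitative regularity of the nodal arcs, i.e., showing the arcs are genuinely $C^2$ and that the $2n$ angles they form at $p$ are exactly equal rather than only equal to leading order. This is precisely where the blow-up step above has to be carried out carefully, using interior Schauder estimates to upgrade the convergence of the rescaled eigenfunction to $P_n$ from merely uniform to $C^{2,\alpha}$; this is the technical core of Cheng's original proof.
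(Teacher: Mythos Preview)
The paper does not supply its own proof of this lemma; it is quoted directly as Theorem~2.5 of Cheng \cite{cheng}, so there is nothing to compare against beyond Cheng's original argument. Your sketch is a faithful outline of that argument: isothermal coordinates reduce to a flat elliptic equation, the Bers--Hartman--Wintner expansion gives the leading homogeneous harmonic polynomial $P_n=\mathrm{Re}(cz^n)$, and the local structure of $\Zcal(\phi)$ (equiangular $C^2$ arcs, sign alternation, no isolated points) is read off from $P_n$ together with a rescaling/implicit-function step.

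One small remark on your discreteness paragraph: you invoke real-analyticity and an infinite-order-vanishing argument, but this is more than is needed and the phrasing (``$P_n$ would vanish on more than $n$ lines'') is not quite the right mechanism. The Bers expansion already gives $\nabla\phi=\nabla P_n+O(|x|^n)$, and since $\nabla P_n$ vanishes only at the origin (its complex derivative is $\tfrac{n}{2}cz^{n-1}$), $\nabla\phi$ is nonzero in a punctured neighborhood of $p$. Hence nodal critical points are isolated directly from the expansion, without appealing to analyticity. This is how Cheng handles it and is cleaner than the unique-continuation route you sketched.
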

We next present a constant-curvature analogue of Lemma 2.4 of \cite{judgemondal}. We replace references to ``line segments" by references to ``geodesic segments" in $\Mk$. Their proof relies only on the analyticity of eigenfunctions and the flat analogue of Lemma \ref{nodalcritical}, so we omit the proof.

\begin{lem}\label{nodalintersectgeodesic}
    Let $\ell$ denote a geodesic in $\Mk$, and let $k$ be a connected component of $\ell\cap \Omega$. Then either $\phi$ vanishes identically on $k$, or the intersection $k\cap \Zcal(\phi)$ is discrete. 
\end{lem}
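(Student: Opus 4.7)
The plan is to reduce the statement to the identity theorem for one-variable real-analytic functions. First, I would invoke real-analyticity twice. The metric $g_\kappa$ on $\Mk$ is real-analytic, as is visible directly from the coordinate formulas in Section \ref{geometry}, so every Laplace eigenfunction on $\Omega$ is real-analytic by elliptic regularity with analytic coefficients (as noted in the footnote to Lemma \ref{nodalcritical}). Moreover, geodesics in a real-analytic Riemannian manifold are real-analytic curves, since they are integral curves of the real-analytic geodesic spray. Parametrize $\ell$ by arc length as $\gamma: I \to \Mk$, and let $J \subseteq I$ be the open subinterval satisfying $\gamma(J) = k$.

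The key step is then to consider $f := \phi \circ \gamma : J \to \Rbb$. As a composition of real-analytic maps, $f$ is real-analytic on the connected open interval $J$. The identity theorem for real-analytic functions in one variable then gives the dichotomy: either $f \equiv 0$ on $J$, in which case $\phi$ vanishes identically on $k$, or the zero set $f^{-1}(0)$ is discrete in $J$, which is precisely the statement that $k \cap \Zcal(\phi)$ is discrete.

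An alternative, more geometric route follows \cite{judgemondal} directly and is perhaps closer to what the authors have in mind when they reference the flat analogue of Lemma \ref{nodalcritical}: supposing $k \cap \Zcal(\phi)$ has an accumulation point $p$, apply Lemma \ref{nodalcritical} to describe $\Zcal(\phi)$ near $p$ as either a single smooth arc (if $p \notin \crit(\phi)$) or finitely many smooth arcs meeting at $p$ at equal angles. Since infinitely many zeros of $\phi$ accumulate along $\ell$ at $p$, one of these arcs must be tangent to $\ell$ and coincide with a subsegment of $\ell$ near $p$, forcing $\phi$ to vanish on a nontrivial subsegment of $k$; real-analyticity then propagates the vanishing along $\gamma$ to all of $k$.

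I do not anticipate any serious obstacle here, since the argument is a direct transcription of the Euclidean proof of Lemma 2.4 of \cite{judgemondal} with ``line segment'' replaced by ``geodesic segment.'' The only ingredient that changes is the underlying real-analyticity of $g_\kappa$ in place of the Euclidean metric, which is immediate from the explicit formulas in Section \ref{geometry}; the single minor bookkeeping point is that $k$ is an open subarc of $\ell$ (so $J$ is a genuine open interval), which follows at once from $\Omega$ being open.
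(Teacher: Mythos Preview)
Your proposal is correct. The paper itself omits the proof entirely, merely noting that Judge--Mondal's Euclidean argument carries over verbatim since it uses only the analyticity of eigenfunctions together with the flat analogue of Lemma~\ref{nodalcritical}.

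Your primary route---restricting $\phi$ to the geodesic and invoking the one-variable identity theorem---is in fact slightly cleaner than what the paper references: it bypasses Lemma~\ref{nodalcritical} altogether and uses only real-analyticity of $\phi$ and of the geodesic parametrization. Your alternative route is the one the paper has in mind, though as you have written it the step ``one of these arcs must be tangent to $\ell$ and coincide with a subsegment of $\ell$'' is slightly glib: tangency alone does not force coincidence, and one really needs that a single analytic arc of $\Zcal(\phi)$ meets the analytic curve $\ell$ in a set accumulating at $p$, which then forces the two analytic arcs to agree near $p$. This is easy to patch, but it illustrates why your first argument is the more economical one.
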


Similarly, Proposition 2.5 of \cite{judgemondal} relies only on the real-analyticity of Laplace eigenfunctions on Euclidean domains and the fact that, in coordinates, the Laplace operator equals a positive, $C^{\infty}$ multiple of $\partial_x^2+\partial_y^2$. Because eigenfunctions on $\Mk$ are also real-analytic and, using a conformal metric for $\Mk$, the Laplace operator is a positive, $C^{\infty}$ multiple of $\partial_x^2+\partial_y^2$, their same proof applies to give 

\begin{prop}\label{critset}
    Each connected component of $\crit(\phi)$ is either an isolated point or a properly embedded real-analytic arc (homeomorphic to either a circle or a line segment). 
\end{prop}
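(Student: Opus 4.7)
The plan is to transcribe Proposition~2.5 of \cite{judgemondal} into the constant-curvature setting, since---as the author has already noted---the Judge--Mondal argument uses only two features of the Euclidean Laplace eigenvalue problem: (i) real-analyticity of eigenfunctions, and (ii) that in suitable local coordinates the Laplacian equals a positive $C^\infty$ multiple of the flat operator $\partial_x^2+\partial_y^2$. My first task is to verify both on $\Mk$. For (ii) I would work in a conformal chart, e.g.\ the Poincar\'e disk $\DDk$ from Section~\ref{geometry}, where the metric is $e^{2f}\geuc$ for a real-analytic function $f$; the eigenvalue equation $\Dk\phi=\mu\phi$ then reads $(\partial_x^2+\partial_y^2)\phi=-\mu e^{2f}\phi$, and (i) follows from standard elliptic regularity for equations with real-analytic coefficients.

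With these in hand, in any such chart $\crit(\phi)=\{\partial_x\phi=0\}\cap\{\partial_y\phi=0\}$ is the common zero set of two real-analytic functions, hence a real-analytic subvariety of $\Omega$. This subvariety has empty interior: otherwise $\nabla\phi$ would vanish on an open subset and real-analytic continuation would force $\phi$ to be constant, contradicting $\Dk\phi=\mu\phi$ for $\mu\neq 0$ with $\phi\not\equiv 0$. The Lojasiewicz structure theorem for real-analytic subvarieties of a $2$-manifold with empty interior then decomposes $\crit(\phi)$ locally into finitely many real-analytic arcs and isolated points.

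The remaining, most delicate step is to rule out branching at a non-isolated critical point $p$---that is, to show each connected component is a single isolated point or a single properly embedded arc rather than a ``star'' of several arcs meeting at $p$. By Lemma~\ref{nodalcritical} nodal critical points are isolated, so at a non-isolated $p$ we have $\phi(p)\neq 0$. Taylor-expanding $\phi-\phi(p)$ at $p$ and matching degrees in the equation $\Dk\phi=\mu\phi$, I would show that the leading non-constant homogeneous part is a harmonic polynomial (when of degree $\geq 3$) or a quadratic with prescribed nonzero Laplacian (when of degree $2$). In either case its gradient vanishes only at the origin, and a standard Puiseux-type analysis of the analytic variety $\crit(\phi)$ near $p$ then identifies it with a single smooth arc.

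I expect this last non-branching step to be the main obstacle, but it is precisely the content of the corresponding argument in \cite{judgemondal} and transfers without change given (i) and (ii). The bulk of the work in writing up the proof would be cosmetic: rephrasing Judge--Mondal's local Euclidean language in terms of a conformal chart on $\Mk$.
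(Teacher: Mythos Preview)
Your proposal is correct and matches the paper's approach exactly: the paper does not reproduce the argument but simply verifies (i) real-analyticity and (ii) the conformal form of $\Dk$, then cites Proposition~2.5 of \cite{judgemondal} verbatim. One small correction to your sketch of the non-branching step: at a non-isolated critical point $p$ with $\phi(p)\neq 0$, the eigenvalue equation forces the leading homogeneous part to have degree exactly $2$ (a degree $\geq 3$ lead would leave the constant term $-\mu e^{2f(p)}\phi(p)$ on the right unmatched), and since $p$ is non-isolated the Hessian is degenerate, so that quadratic's gradient vanishes along a \emph{line}, not only at the origin---this line is what gives rise to the arc, and the actual content of the Judge--Mondal argument is that higher-order terms cannot create additional branches.
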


The above results allow us to study Neumann eigenfunctions of geodesic polygons in $\Mk$ up to the smooth boundary points as in Section 3 of \cite{judgemondal}. Let $P\subseteq\Mk$ be a geodesic polygon, and let $u$ be a Neumann Laplace eigenfunction on $P$. As discussed in Section \ref{geometry}, for each edge $e$ of $P$, there exists a unique isometric reflection of $\Mk$ fixing $e$ pointwise. Using these isometries and the Neumann boundary conditions, we can extend the eigenfunction $u$ to a neighborhood of (the interior of) each edge to be even with respect to an appropriate isometry. Thus, there exists some open set $\Omega$ containing $P$ such that $\partial\Omega\cap\partial P$ is the set of vertices of $P$ and to which $u$ extends to an eigenfunction $\overline{u}$ that is real-analytic. Let $\phi$ be a Laplace eigenfunction of $\Omega$.\footnote{In applications, we will take $\phi$ to be some derivative of a second Neumann eigenfunction.} By Lemma \ref{nodalcritical}, $\Zcal(\phi)$ is a union of properly immersed curves in $\Omega$.\footnote{Recall that properly immersed curves in an open set have endpoints only in the boundary of the set.} In the following proposition, suppose that $\overline{\alpha}$ is a properly immersed curve of $\Zcal(\phi)$, and let $\alpha$ be a connected component of $\overline{\alpha}\cap \overline{P}$, where $\overline{P}$ is the closure of $P$ in $\Mk$. Then we get a curved version of Proposition 3.2 of \cite{judgemondal}. Their proof is purely topological and applies directly to the case of constant-curvature polygons in $\Mk$.
\begin{prop}\label{possiblecomponents}
    $\alpha$ is either an isolated point in $\partial P$ or is an immersed arc. If $\alpha$ is not a loop or an isolated point, then its endpoints are contained in $\partial P$.
\end{prop}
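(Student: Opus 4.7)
The plan is to lift the analysis to the parameter space of $\overline{\alpha}$. Parameterize $\overline{\alpha}$ by a proper immersion $f:J\to\Omega$, where $J$ is either $S^1$ or an open interval. Since $\overline{P}$ is compact in $\Omega$ and $f$ is proper, the preimage $K:=f^{-1}(\overline{P})\subseteq J$ is compact, hence a finite disjoint union of isolated points and closed intervals. (If $J=S^1$ and $K=J$, then $\overline{\alpha}\subseteq\overline{P}$ and $\alpha=\overline{\alpha}$ is a loop, so we may assume $K\neq J$.)

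Let $\alpha$ be a connected component of $\overline{\alpha}\cap\overline{P}$ and let $\Ccal$ denote the collection of components $C$ of $K$ with $f(C)\subseteq\alpha$. Each $f(C)$ is connected and contained in $\alpha$, and every point of $\alpha$ is in some $f(C)$, so $\alpha=\bigcup_{C\in\Ccal}f(C)$. For each $C\in\Ccal$, a maximality argument forces the endpoint(s) of $C$ in $J$ to map into $\partial P$: if an endpoint $t_0$ of $C$ mapped into the open set $P$, then by continuity a neighborhood of $t_0$ in $J$ would map into $P\subseteq\overline{P}$, contradicting the maximality of $C$ as a component of $K$. Thus if $C=\{t_0\}$ is a single point, then $f(t_0)\in\partial P$, and if $C=[a,b]$ is an interval, then $f|_C$ is an immersed arc with endpoints $f(a),f(b)\in\partial P$.

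If $|\Ccal|=1$, then $\alpha$ is either an isolated point in $\partial P$ or an immersed arc with endpoints in $\partial P$, as required. If $|\Ccal|\geq 2$, the distinct images $f(C)$ must be linked inside $\alpha$ by self-intersections of $\overline{\alpha}$, and Lemma \ref{nodalcritical} tells us these self-intersections can occur only at the discrete set of nodal critical points of $\phi$; in particular, $|\Ccal|$ is finite. The main technical obstacle will be showing that the resulting gluing does not form a $Y$- or $\theta$-shape. I would handle this via the local structure near each nodal critical point $p$ lying in $\alpha$: by Lemma \ref{nodalcritical}, $\Zcal(\phi)$ near $p$ consists of $n\geq 2$ equiangular arcs, and since $\overline{\alpha}$ is a single immersed curve, each of its finitely many passages through $p$ picks out one such arc and crosses smoothly through. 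After this identification, the graph formed by the $f(C)$ becomes a 1-manifold (possibly with boundary), and its topological type must still be an arc whose free endpoints lie in $\partial P$ or, in the remaining case, a loop obtained by cyclic gluing.
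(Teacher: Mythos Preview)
The paper gives no proof here, deferring entirely to Proposition~3.2 of \cite{judgemondal}. Your approach in the case $|\Ccal|=1$ is the natural one and essentially correct, modulo one gap: compactness of $K$ alone does not force it to be a \emph{finite} union of points and closed intervals (consider a Cantor set). What you need is that $\partial K\subseteq f^{-1}(\partial P)$ is finite; this follows because $f$ is real-analytic and each edge of $P$ is a geodesic, so by Lemma~\ref{nodalintersectgeodesic} (or directly by analyticity) each $f^{-1}(e)$ is either all of $J$ or discrete, hence finite on the compact set $K$.

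Your treatment of $|\Ccal|\geq 2$ does not go through. Consider $\overline\alpha$ a figure-eight in $\Zcal(\phi)$ with crossing point $p$ in the interior of $P$, parameterized by $f:S^1\to\Omega$ with $f(0)=f(\pi)=p$, and arrange that each loop of the eight exits $\overline P$ exactly once. Then $K$ has two components, each containing one preimage of $p$ in its interior, and the set-theoretic component $\alpha=f(K)$ is an $X$: four arms emanating from $p$ with four distinct free ends on $\partial P$. No immersion of an interval or of $S^1$ can have image with four free ends, so this $\alpha$ is not an immersed arc, and your asserted ``1-manifold after identification'' is in fact an $X$. The resolution is almost certainly that in \cite{judgemondal} the phrase ``component of $\overline\alpha\cap\overline P$'' is taken in the parameter space $J$ rather than in the image: one lets $\alpha=f|_C$ for a single component $C$ of $K$. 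Under that reading there is no $|\Ccal|\geq 2$ case at all, and your maximality argument for the endpoints of $C$ already gives the complete proof.
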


\section{Eigenfunctions near vertices of polygons}\label{vertexsection}
As in the Euclidean case, we use a local separation of variables near vertices of constant-curvature polygons to obtain global information about eigenfunctions. In this section, we perform this separation of variables and use this to obtain an expansion of eigenfunctions that has the same structure as in the Euclidean case. Due to the similarities of the expansions, we usually refer the reader to \cite{judgemondal} for proofs of the results we state. To simplify computations, we restrict to $\kappa\in (-4/3,4/3)$ so that we can employ the Poincar\'e disk model $\DDk$ of $\Mk$.\\
\indent Suppose that $P\subseteq\Mk$ is a geodesic polygon. Working in $\DDk$ and applying an isometry if necessary, we may suppose that $P$ has a vertex of interior angle $\beta$ at the origin with one adjacent edge contained in the positive $x$-axis such that, near this edge, $P$ is contained in the upper half-plane. Using polar coordinates, the Laplacian is then given by 
\[\Dk=-\frac{(1+\ell(\kappa) r^2)^2}{3|\ell(\kappa)|+1}\Big(\partial_r^2+\frac{1}{r}\partial_r+\frac{1}{r^2}\partial_{\theta}^2\Big).\]
\indent Choose $\epsilon>0$ sufficiently small such that the sector \[S_{\epsilon}=\{(r,\theta)\mid 0<r<\epsilon,0<\theta<\beta\}\] is contained in $P$. Let $u$ be an eigenfunction of $\Dk$ satisfying Neumann conditions on $\partial S_{\epsilon}\cap \partial P$, and let $\mu$ be the eigenvalue associated to $u$. We call such a vertex a \textit{Neumann vertex} of $P$. Let $\nu=\pi/\beta$. By the analyticity of $u$ away from the vertices, for fixed $r\in (0,\epsilon)$, the restriction of $u$ to the circular arc $\{(r,\theta)\mid 0<\theta<\beta\}$ is an $L^2$ function, so we have the Fourier expansion
\[\sum_{n=0}^{\infty}u_n(r)\cos(n\nu\theta)\]
where \[u_n(r)=\frac{2}{\beta}\int_0^{\beta}u(r,\theta)\cos(n\nu\theta)d\theta.\] Since $u$ satisfies Neumann conditions on $\partial S_{\epsilon}\cap \partial P$ and is analytic in $r$, we can differentiate and integrate this sum term by term. Let \[f=f_{\kappa}(r)=\frac{2}{\beta}\cdot\frac{(1+\ell(\kappa) r^2)^2}{3|\ell(\kappa)|+1}.\] We then have 
\begin{align*}
    \mu u=\Dk u&=\frac{2}{\beta}\sum_{n=0}^{\infty}\Big(\int_0^{\beta}(\Dk u)(r,\theta)\cos(n\nu\theta)d\theta\Big)\cos(n\nu\theta)\\
    &=\sum_{n=0}^{\infty}\Big[-f\Big(\partial_r^2+\frac{1}{r}\partial_r\Big)u_n(r)-\frac{f}{r^2}\int_0^{\beta}u(r,\theta)\partial_{\theta}^2\cos(n\nu\theta)d\theta\Big]\cos(n\nu\theta)\\
    &=\sum_{n=0}^{\infty}-f\Big(\partial_r^2+\frac{1}{r}\partial_r-\Big(\frac{n
    \nu}{r}\Big)^2\Big)u_n(r)\cos(n\nu\theta).
\end{align*}
By the uniqueness of the Fourier coefficients, the functions $u_n(r)$ must satisfy the ordinary differential equation
\[u_n''(r)+\frac{1}{r}u'(r)+\frac{-fn^2\nu^2+\mu r^2}{fr^2}u_n(r)=0.\]
We can then apply the Frobenius method for second-order differential equations (see, for instance, Teschl's book \cite{teschl} pp. 116-120) to see that this equation has (square integrable) solutions of the form
\[u_n(r)=a_nr^{n\nu}\sum_{j=0}^{\infty}h_{n,j}r^j,\] where $a_n\in \Rbb$ and the sum is analytic and converges uniformly on compact subsets of $\Mk$. Moreover, one can check that $h_{n,j}\neq 0$ for all $n$ and even $j$ and that $h_{n,j}=0$ for all $n$ and odd $j$. We can thus rewrite this expansion as 
\begin{equation}\label{neumannexpansion}
 u(r,\theta)=\sum_{n=0}^{\infty}a_nr^{n\nu}g_{n\nu}(r^2)\cos(n\nu\theta)   
\end{equation}
where $g_{n\nu}$ is analytic with nowhere vanishing Taylor coefficients. Moreover, we re-normalize the functions $g_{n\nu}$ if necessary such that $g_{n\nu}(0)=1$ for all $n$ and $g_0'(0)<0$. This is an exact analogue of the expansion (6) in \cite{judgemondal}, and all of the proofs of Section 4 of \cite{judgemondal} apply immediately to the case of constant curvature. We restate these results here without proof.
\begin{nota}
    When we wish to emphasize the dependence of the coefficients $a_n$ in (\ref{neumannexpansion}) on the vertex $v$ and the eigenfunction $u$, we may write $a_n(u,v)$. Similarly, we may write $b_n(u,v)$ and $c_n(u,v)$ in the expansions (\ref{mixedexpansion}) and (\ref{dirichletexpansion}) below. 
\end{nota}
\begin{remk}
    Using geodesic polar coordinates, one can obtain a more explicit expression than (\ref{neumannexpansion}) in terms of associated Legendre functions (see, e.g., \cite{terras}). However, it is more convenient for our purposes that our expansion has the same form as (6) in \cite{judgemondal}.
\end{remk}
\begin{lem}[Lemma 4.3 in \cite{judgemondal}]\label{arcsnearvertex}
    If $\beta\in (0,\frac{\pi}{2})$, then there exists a neighborhood $U$ of $0$ such that 
    \begin{enumerate}
        \item If $a_0\neq 0$, then $U\cap u^{-1}(\{u(0)\})=\{0\}$.
        \item If $a_0=0$ and $a_1\neq 0$, then $U\cap u^{-1}(\{u(0)\})$ is a simple arc containing $0$.
    \end{enumerate}
    If $\beta\in (\frac{\pi}{2},\pi)$, then there exists a neighborhood $U$ of $0$ such that 
    \begin{enumerate}
        \item If $a_1\neq 0$, then $U\cap u^{-1}(\{u(0)\})$ is a simple arc containing $0$.
        \item If $a_0\neq 0$ and $a_1=0$, then $U\cap u^{-1}(\{u(0)\})=\{0\}$.
    \end{enumerate}
    If $\beta\in(0,\pi)$ and $a_0=a_1=0$, then there exists a neighborhood $U$ of $0$ such that $U\cap u^{-1}(\{u(0)\})$ consists of at least two simple arcs intersecting only at $0$.
\end{lem}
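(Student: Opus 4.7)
The plan is to subtract the value $u(0)=a_0$ from the expansion (\ref{neumannexpansion}) and compare the orders of vanishing of the resulting terms, following the same strategy used for equation (6) in \cite{judgemondal}. Writing
\[
u(r,\theta)-u(0)=a_0\bigl(g_0(r^2)-1\bigr)+\sum_{n\geq 1}a_n r^{n\nu}g_{n\nu}(r^2)\cos(n\nu\theta),
\]
and using the normalization $g_0(0)=1$, $g_0'(0)<0$, the first summand equals $a_0 g_0'(0) r^{2}\bigl(1+O(r^2)\bigr)$, while the $n$th summand of the Fourier series has leading order $r^{n\nu}$. Everything then depends on how the exponent $2$ compares with the exponents $n\nu=n\pi/\beta$, which is the reason the dichotomy $\beta<\pi/2$ vs.\ $\beta>\pi/2$ appears.

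First I would dispatch the ``no arc'' cases. When $\beta<\pi/2$ and $a_0\neq 0$, every exponent $n\nu$ with $n\geq 1$ exceeds $2$, so the $a_0 g_0'(0) r^2$ term dominates and $u-u(0)$ has the constant sign $-\operatorname{sgn}(a_0)$ on a punctured neighborhood of $0$; thus $U\cap u^{-1}(\{u(0)\})=\{0\}$. The case $\beta>\pi/2$, $a_0\neq 0$, $a_1=0$ is identical: now the competing exponents are $n\nu$ for $n\geq 2$, and $2\nu>2$, so the $r^2$ term still wins.

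The ``one arc'' cases are treated uniformly. When the first non-vanishing coefficient among $a_0,a_1$ produces a leading Fourier mode $a_1 r^\nu g_\nu(r^2)\cos(\nu\theta)$ that dominates (which occurs when $\beta<\pi/2$ with $a_0=0$, $a_1\neq 0$, or when $\beta>\pi/2$ with $a_1\neq 0$), I would factor out $r^\nu$ and write
\[
r^{-\nu}\bigl(u(r,\theta)-u(0)\bigr)=a_1 g_\nu(r^2)\cos(\nu\theta)+r^{2-\nu}\,a_0 g_0'(0)\bigl(1+O(r^2)\bigr)+\sum_{n\geq 2}a_n r^{(n-1)\nu}g_{n\nu}(r^2)\cos(n\nu\theta).
\]
In the sector $\theta\in(0,\beta)$, the argument $\nu\theta$ sweeps $(0,\pi)$, so $\cos(\nu\theta)$ has the unique simple zero $\theta=\beta/2$. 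Because the exponent $2-\nu$ is positive in both relevant regimes, the extra terms are a $C^1$-small perturbation and the implicit function theorem yields a single analytic arc $\theta=\beta/2+O(r^{\min(\nu,2-\nu)})$ through the origin.

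Finally, if $a_0=a_1=0$ and $k\geq 2$ is minimal with $a_k\neq 0$ (which exists, else $u\equiv 0$), the same scaling in $r$, using the factor $r^{k\nu}$, shows that $r^{-k\nu}(u-u(0))\to a_k\cos(k\nu\theta)$ as $r\to 0$. The function $\cos(k\nu\theta)$ has exactly $k$ simple zeros in $(0,\beta)$, and applying the implicit function theorem at each yields $k\geq 2$ simple arcs meeting at $0$. The main obstacle I anticipate is this last step: ensuring that no extra arcs sneak in from the higher Fourier modes and that the $k$ arcs really intersect only at $0$. This is not automatic from the series expansion because $\nu$ is generally irrational, so Weierstrass preparation in $(r,\theta)$ is unavailable; the cleanest remedy is to work in the blown-up variable $\theta$ with $r$ treated as a parameter, bootstrap the implicit function estimate using $g_{k\nu}(0)=1$ and the uniform convergence of the series on compact sets guaranteed by (\ref{neumannexpansion}), and invoke Lemma \ref{nodalcritical} to rule out any accumulation of nodal critical points near the vertex.
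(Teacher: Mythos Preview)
Your approach is correct and is precisely what the paper has in mind: the paper omits the proof and cites Lemma~4.3 of \cite{judgemondal}, remarking that since the expansion (\ref{neumannexpansion}) has exactly the same structure as equation~(6) there, the argument---comparing the exponent $2$ coming from $a_0(g_0(r^2)-1)$ against the exponents $n\nu=n\pi/\beta$---carries over verbatim (see also the remark immediately following the lemma). Two minor remarks: your claim that ``the exponent $2-\nu$ is positive in both relevant regimes'' is false when $\beta<\pi/2$ (there $\nu>2$), but this is harmless because in that regime you are assuming $a_0=0$ and the term is absent; and your final worry about ``extra arcs sneaking in'' is unnecessary, since the last clause of the lemma only asserts the existence of \emph{at least} two arcs, not exactly $k$.
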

\begin{remk}
    We emphasize that the roles of $a_0$ and $a_1$ in Lemma \ref{arcsnearvertex} are reversed between the acute and obtuse angle cases. The reason for this---as can be seen in the proof of Lemma 4.3 in \cite{judgemondal}---is that for $\beta$ acute, $\nu>2$, so the smallest positive power of $r$ in (\ref{neumannexpansion}) is $2$, and the angular part of the term with this exponent is constant. In contrast, for obtuse angles, $1<\nu<2$, so the smallest positive power of $r$ in (\ref{neumannexpansion}) is $\nu<2$, and the angular part of the term with this exponent vanishes on the angle bisector of the vertex.
\end{remk}

\begin{lem}[Proposition 4.4 in \cite{judgemondal}]\label{noaccumatvertex}
    If $\beta\in (0,2\pi)\setminus\{\frac{\pi}{2},\pi,\frac{3\pi}{2}\}$, then there exists a neighborhood of $0$ containing no critical points\footnote{Recall that we do not consider vertices to be potential critical points.} of $u$. If $\beta=\frac{\pi}{2}$, then there is a neighborhood $U$ of $0$ such that either $u$ has no critical points in $U$ or $U\cap \crit(u)$ equals an edge of $U\cap S_{\epsilon}$.
\end{lem}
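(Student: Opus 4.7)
The plan is to substitute the Frobenius expansion (\ref{neumannexpansion}) into $\partial_r u$ and $r^{-1}\partial_\theta u$ and exploit $\sin^2+\cos^2=1$ to bound the gradient from below. In the polar coordinates above, $|\nabla u|^2$ is a uniformly positive smooth multiple of $(\partial_r u)^2+r^{-2}(\partial_\theta u)^2$ on $S_\epsilon$. Write $m=\min\{n\geq 0:a_n\neq 0\}$ (assuming $u$ is nonconstant).

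If $m\geq 1$, termwise differentiation yields
\[\partial_r u=a_m m\nu g_{m\nu}(0)\, r^{m\nu-1}\cos(m\nu\theta)+\mathrm{h.o.},\qquad r^{-1}\partial_\theta u=-a_m m\nu g_{m\nu}(0)\, r^{m\nu-1}\sin(m\nu\theta)+\mathrm{h.o.},\]
so summing the squares cancels the $\theta$-dependence at leading order and gives $|\nabla u|^2\geq \tfrac{1}{2}(a_m m\nu g_{m\nu}(0))^2 r^{2(m\nu-1)}>0$ for $r$ small, ruling out critical points.

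If $m=0$ and $\beta\notin\{\pi/2,\pi,3\pi/2\}$, let $m_1=\min\{n\geq 1:a_n\neq 0\}$. The leading contributions become
\[\partial_r u=2a_0 g_0'(0)\, r+a_{m_1}m_1\nu g_{m_1\nu}(0)\, r^{m_1\nu-1}\cos(m_1\nu\theta)+\mathrm{h.o.},\qquad r^{-1}\partial_\theta u=-a_{m_1}m_1\nu g_{m_1\nu}(0)\, r^{m_1\nu-1}\sin(m_1\nu\theta)+\mathrm{h.o.}\]
If $m_1\nu>2$, then $(\partial_r u)^2\sim 4(a_0 g_0'(0))^2 r^2$ dominates and gives a lower bound on $|\nabla u|^2$; if $m_1\nu<2$, the $r^{m_1\nu-1}$ terms dominate both partials and summing squares gives $|\nabla u|^2\geq c\,r^{2(m_1\nu-1)}>0$. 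The excluded angles $\{\pi/2,\pi,3\pi/2\}$ are precisely those for which $m_1\nu=2$ for some $m_1\in\{1,2,3\}$, where the $r$ term from $a_0$ and the $r^{m_1\nu-1}$ term become of the same order and can cancel along a curve emanating from the vertex.

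The main obstacle is the right-angle case $\beta=\pi/2$. Here $\nu=2$ and $r^{2n}\cos(2n\theta)$, $r^{2n}\sin(2n\theta)$ are harmonic polynomials in $(x,y)=(r\cos\theta,r\sin\theta)$, so (\ref{neumannexpansion}) rearranges into an ordinary convergent power series in $(x,y)$; equivalently, successive reflection of $u$ through the two Neumann edges extends it real-analytically to a full Euclidean neighborhood of $0$. Proposition \ref{critset} applied to this extension shows that $\crit(u)$ near $0$ is a disjoint union of isolated points and properly embedded real-analytic arcs, and two distinct arcs meeting only at $0$ would form a connected set that is not an arc, violating the proposition; hence at most one such arc can pass through $0$. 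Accumulation of critical points at $0$ therefore forces the existence of a unique analytic arc $\gamma$ through $0$, and since $\crit(u)$ is invariant under both edge reflections and $\gamma$ is its unique component through $0$, $\gamma$ itself is fixed as a set by both reflections. Parametrizing $\gamma$ near $0$ as $y=h(x)$ or $x=h(y)$ and using odd symmetry under the corresponding reflection forces $h\equiv 0$, so $\gamma$ locally coincides with one of the coordinate axes, which in $S_\epsilon$ are exactly the two edges adjacent to the vertex. This establishes the dichotomy at $\beta=\pi/2$.
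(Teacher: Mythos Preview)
Your proof is correct and follows essentially the same approach as the paper, which simply cites Proposition~4.4 of \cite{judgemondal}: leading-order analysis of the expansion (\ref{neumannexpansion}) for generic $\beta$, and for $\beta=\pi/2$ analytic extension to a full neighborhood of the vertex together with the critical-set structure result and reflection symmetry. One minor point in the right-angle case: to conclude that $\crit(u)\cap U$ equals \emph{exactly} the edge (rather than merely containing it), you are implicitly using that the zero set of the real-analytic function $|\nabla u|^2$ has only finitely many local branches at $0$, which is slightly stronger than the bare statement of Proposition~\ref{critset} but follows from the same underlying \L ojasiewicz-type structure theory.
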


\begin{lem}[Proposition 2.1 in \cite{erratum}]\label{vertexmax}
    If $\beta\in(0,\frac{\pi}{2})$, then $a_0\neq 0$ if and only if $0$ is a local extremum of $u$. If $\beta\in(\frac{\pi}{2},\pi)$, then $a_0\neq 0$ and $a_1=0$ if and only if $0$ is a local extremum of $u$. In either case, if the vertex is a local extremum, then it is a local maximum (resp. minimum) if $a_0>0$ (resp. $a_0<0$).
\end{lem}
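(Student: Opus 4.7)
The plan is to read off the local behavior of $u$ near the vertex directly from the expansion (\ref{neumannexpansion}), by comparing the three relevant leading contributions: the radial piece $a_0 g_0(r^2) = a_0 + a_0 g_0'(0)\,r^2 + O(r^4)$ from $n=0$, the term $a_1 r^{\nu} g_\nu(r^2)\cos(\nu\theta)$ from $n=1$, and the tail $\sum_{n\ge 2} a_n r^{n\nu}g_{n\nu}(r^2)\cos(n\nu\theta) = O(r^{2\nu})$. Because the series converges absolutely and uniformly on compact subsets of a neighborhood of the vertex on which $u$ extends analytically (via the reflected extension reviewed in Section \ref{nodalsection}), these asymptotic estimates are uniform in $\theta\in[0,\beta]$.

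For the forward implication, suppose first that $\beta\in(0,\pi/2)$, so $\nu>2$; then both the $r^\nu$ term and the tail are $o(r^2)$, hence
\[u(r,\theta) - a_0 = a_0 g_0'(0)\, r^2 + o(r^2)\]
uniformly in $\theta$. Since $g_0'(0)<0$, the sign of $u(r,\theta)-a_0$ for small $r>0$ is opposite to the sign of $a_0$, and the first claim follows. The case $\beta\in(\pi/2,\pi)$ with $a_0\neq 0$ and $a_1=0$ is structurally identical: the smallest nonzero $\theta$-dependent term is now $O(r^{2\nu})$ with $2\nu>2$, so the same expansion and conclusion apply.

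For the reverse implication, the goal is to exhibit sign change of $u-u(0)$ in every neighborhood of $0$ in all other cases. If $a_0=0$, then $u(0)=0$ and the leading term of $u$ is $a_{n_0}r^{n_0\nu} g_{n_0\nu}(r^2)\cos(n_0\nu\theta)$ for the smallest $n_0\ge 1$ with $a_{n_0}\neq 0$; since $n_0\nu\theta$ sweeps $(0,n_0\pi)$ as $\theta$ varies in $(0,\beta)$, the cosine factor attains both signs, and restricting $u$ to two rays on which this factor has opposite sign yields opposite signs for all small $r>0$. In the remaining case $\beta\in(\pi/2,\pi)$ with $a_0\neq 0$ and $a_1\neq 0$, the exponent $\nu\in(1,2)$ forces the $r^\nu$ term to dominate both the $r^2$ contribution from $a_0 g_0(r^2)$ and the $O(r^{2\nu})$ tail, giving
\[u(r,\theta) - a_0 = a_1 r^\nu \cos(\nu\theta) + o(r^\nu),\]
and since $\cos(\nu\theta)$ changes sign on $(0,\beta)$, we again get opposite signs for $u-a_0$ along rays approaching the two sides of the sector, excluding an extremum.

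The only delicate point is confirming that the remainder bounds are $o(\cdot)$ uniformly in $\theta$, which follows from a common radius of absolute convergence for the series together with uniform bounds on the holomorphic factors $g_{n\nu}$ on a small disk around $0$. Apart from this bookkeeping the proof reduces to a term-by-term comparison of the exponents $2$, $\nu$, and $2\nu$, and reproduces the argument of Proposition 2.1 in \cite{erratum}.
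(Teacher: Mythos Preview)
Your argument is correct and is precisely the expansion-based comparison that the paper imports from Proposition~2.1 of \cite{erratum}: the paper does not give its own proof here but observes that the expansion \eqref{neumannexpansion} has the same structure as in the Euclidean case, so the term-by-term exponent comparison (of $r^2$, $r^\nu$, and $r^{2\nu}$) carries over verbatim. Your write-up is essentially that transplanted argument.
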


We can repeat the above analysis on vertices near which an eigenfunction $u$ satisfies mixed boundary conditions. Suppose now that on $S_{\epsilon}\cap \partial P$, $u$ satisfies Dirichlet boundary conditions on $x$-axis and Neumann conditions on the ray $\{\theta=\beta\}$. We call such a vertex a \textit{mixed vertex} of $P$. Then on $S_{\epsilon}$, we have the expansion 
\begin{equation}\label{mixedexpansion}    u(r,\theta)=\sum_{n=0}^{\infty}b_nr^{(n+1/2)\nu}g_{(n+1/2)\nu}(r^2)\sin((n+1/2)\nu\theta).
\end{equation}
There are mixed analogues of Lemmas \ref{arcsnearvertex} and \ref{noaccumatvertex} that are in fact simpler to prove than in the Neumann case.
\begin{lem}\label{mixedarcsatvertex}
    If $\beta\in (0,\pi)$ and $b_0\neq 0$, then there exists a neighborhood $U$ of $0$ such that $U\cap \Zcal(u)$ is contained in the $x$-axis and such that $U$ contains no critical points of $u$. If $b_0=0$, then $0$ is the endpoint of an arc in $Z(u)\cap P$.
\end{lem}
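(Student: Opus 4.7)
The plan is to mimic the Neumann analysis of Lemma \ref{arcsnearvertex} while exploiting a genuine simplification peculiar to the mixed setting: every angular mode $\sin((n+1/2)\nu\theta)$ appearing in (\ref{mixedexpansion}) is divisible by the common factor $\sin(\nu\theta/2)$. Concretely, the trigonometric identity $\sin((2n+1)x) = \sin(x)\,U_{2n}(\cos x)$, where $U_{2n}$ is a polynomial of degree $2n$, applied with $x = \nu\theta/2$, lets me rewrite
\[
u(r,\theta) \;=\; \sin(\nu\theta/2)\,r^{\nu/2}\,F(r,\theta), \qquad F(r,\theta) \;:=\; \sum_{n=0}^{\infty} b_n\, r^{n\nu}\, g_{(n+1/2)\nu}(r^2)\, U_{2n}(\cos(\nu\theta/2)),
\]
where $F$ extends continuously to $r=0$ with $F(0,\theta)\equiv b_0$.

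For the first assertion, assume $b_0\neq 0$, so that $F$ is nonvanishing on some neighborhood $U$ of the vertex. Since $\sin(\nu\theta/2)>0$ on $(0,\beta]$ and $r^{\nu/2}>0$ for $r>0$, the factorization immediately forces $\Zcal(u)\cap U$ into the $x$-axis. For the critical-point statement, differentiating the factorization yields
\[
\partial_r u \;=\; \sin(\nu\theta/2)\,r^{\nu/2-1}\bigl[(\nu/2)F + r\partial_r F\bigr],
\]
and the bracket tends to $(\nu/2)b_0\neq 0$ uniformly in $\theta$ as $r\to 0$, so $\partial_r u$ is nonzero for all small $r>0$ and $\theta\in(0,\beta]$ once $U$ is shrunk. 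Along the Dirichlet edge $\theta=0$ I would instead appeal to $\partial_\theta u$, whose leading behavior is $(\nu/2)r^{\nu/2}b_0\neq 0$.

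For the second assertion, suppose $b_0=0$ and let $k\geq 1$ be the smallest index with $b_k\neq 0$. The plan is to apply the $C^1$ implicit function theorem to $v(r,\theta) := u(r,\theta)/r^{(k+1/2)\nu}$. Since the next exponent appearing in (\ref{mixedexpansion}) beyond $(k+1/2)\nu$ is $(k+3/2)\nu$ and $\nu = \pi/\beta > 1$, the function $v$ extends $C^1$-smoothly across $r=0$ with $v(0,\theta)=b_k\sin((k+1/2)\nu\theta)$ and $\partial_r v(0,\theta)\equiv 0$. Choosing $\theta_\ast := 2\beta/(2k+1)\in(0,\beta)$ gives $(k+1/2)\nu\theta_\ast = \pi$, so $v(0,\theta_\ast)=0$ while $\partial_\theta v(0,\theta_\ast) = -b_k(k+1/2)\nu\neq 0$. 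The implicit function theorem then produces a $C^1$ arc $r\mapsto\theta(r)$ with $\theta(0)=\theta_\ast$ and $u(r,\theta(r))\equiv 0$, which is the desired nodal arc in $P$ having the vertex as endpoint.

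The main obstacle I anticipate is justifying that $v$ is genuinely $C^1$ across $r=0$, given that the original series uses fractional powers of $r$. The point is that after division by $r^{(k+1/2)\nu}$, every remaining exponent $(n-k)\nu$ with $n>k$ strictly exceeds $1$; combined with the standard uniform convergence of Frobenius series on compact neighborhoods of the vertex in $\Mk$, this permits termwise differentiation in $r$ once and yields continuity of $\partial_r v$ up to $r=0$, which is exactly what the $C^1$ implicit function theorem requires.
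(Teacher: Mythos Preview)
Your argument is correct. The paper's own proof is two sentences: the first assertion ``follows directly from the expansion (\ref{mixedexpansion}),'' and for the second it records the leading asymptotic $u=b_m r^{(m+1/2)\nu}\sin((m+1/2)\nu\theta)+O(r^{\min\{(m+3/2)\nu,(m+1/2)\nu+2\}})$ and states that the conclusion ``quickly follows,'' leaving the implicit-function step to the reader. Your treatment of the second assertion is precisely this argument spelled out; one small correction is that the next exponent after $(k+1/2)\nu$ can also be $(k+1/2)\nu+2$ coming from the $g$-factor, not only $(k+3/2)\nu$, but since $2>1$ your $C^1$ justification goes through unchanged. For the first assertion your Chebyshev factorization $u=\sin(\nu\theta/2)\,r^{\nu/2}F$ with $F(0,\theta)\equiv b_0$ is a slightly different packaging than the paper's direct reading of the dominant term, with the minor advantage of making the zero structure explicit to all orders simultaneously rather than perturbatively; the underlying content---dominance of $b_0 r^{\nu/2}\sin(\nu\theta/2)$ near the vertex---is the same.
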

\begin{proof}
    The first statement follows directly from the expansion (\ref{mixedexpansion}). Suppose that $b_0=0$, and let $m$ be the smallest integer for which $b_m\neq 0$. Then (\ref{mixedexpansion}) implies
    \[u(r,\theta)=b_mr^{(m+1/2)\nu}\sin((m+1/2)\nu\theta)+O(r^{\min\{(m+3/2)\nu,(m+1/2)\nu+2\}}).\]
    Since $m\geq 1$, the second statement quickly follows.
\end{proof}

We can also expand eigenfunctions near \textit{Dirichlet vertices} (vertices near which the eigenfunction satisfies Dirichlet conditions):
\begin{equation}\label{dirichletexpansion}
    u(r,\theta)=\sum_{n=1}^{\infty}c_nr^{n\nu}g_{n\nu}(r^2)\sin(n\nu\theta).
\end{equation}
Note that the sum in this case begins at $n=1$ instead of $n=0$. The next lemma has a proof nearly identical to that of Lemma \ref{mixedarcsatvertex}.
\begin{lem}\label{dirarcsatvertex}
    If $\beta\in(0,\pi)$ and $c_1\neq 0$, then there exists a neighborhood $U$ of $0$ such that \[\Zcal(u)\cap U=\partial S_{\epsilon}\cap\partial P\] and such that $u$ has no critical points in $U$. If $c_1=0$, then $0$ is the endpoint of an arc in $\Zcal(u)\cap P$.
\end{lem}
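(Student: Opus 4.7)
The plan is to mimic the strategy used for Lemma \ref{mixedarcsatvertex}, exploiting the explicit form of the Dirichlet expansion (\ref{dirichletexpansion}) together with the fact that $\nu = \pi/\beta$, so $\sin(\nu\theta)$ vanishes precisely at the two boundary rays $\theta = 0$ and $\theta = \beta$.

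Suppose first that $c_1 \neq 0$. Since $g_\nu(0) = 1$, the expansion (\ref{dirichletexpansion}) gives
\[ u(r,\theta) = c_1 r^{\nu} \sin(\nu\theta) + O\bigl(r^{\min\{2\nu,\,\nu+2\}}\bigr) \]
uniformly in $\theta \in [0,\beta]$. For $\theta \in (0,\beta)$ we have $\nu\theta \in (0,\pi)$, so $\sin(\nu\theta) > 0$. Hence for small enough $r$, $u(r,\theta)$ has the sign of $c_1$ throughout the open sector, and the nodal set in a neighborhood of $0$ is contained in $\partial S_\epsilon \cap \partial P$. In the orthonormal polar frame, differentiating the expansion termwise yields
\[ \partial_r u = c_1 \nu r^{\nu-1}\sin(\nu\theta) + O(r^{\min\{2\nu-1,\nu+1\}}), \qquad \tfrac{1}{r}\partial_\theta u = c_1 \nu r^{\nu-1}\cos(\nu\theta) + O(r^{\min\{2\nu-1,\nu+1\}}), \]
so $|\nabla u|^2 = (c_1\nu)^2 r^{2(\nu-1)} + o(r^{2(\nu-1)})$, which is strictly positive for all sufficiently small $r > 0$. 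Thus $u$ has no critical points in a deleted neighborhood of the vertex.

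For the second part, assume $c_1 = 0$ and let $m \geq 2$ be the smallest index with $c_m \neq 0$. The expansion becomes
\[ u(r,\theta) = c_m r^{m\nu}\sin(m\nu\theta) + O\bigl(r^{\min\{(m+1)\nu,\,m\nu+2\}}\bigr). \]
Because $m \geq 2$, the leading angular factor $\sin(m\nu\theta)$ has the interior zero $\theta_0 := \beta/m \in (0,\beta)$, at which $\partial_\theta \sin(m\nu\theta)\bigl|_{\theta_0} = m\nu\cos(\pi) = -m\nu \neq 0$. Writing $u(r,\theta) = r^{m\nu}F(r,\theta)$ where $F(r,\theta) = c_m \sin(m\nu\theta) + O(r^{\min\{\nu,2\}})$ is real-analytic, we have $F(0,\theta_0) = 0$ and $\partial_\theta F(0,\theta_0) = -c_m m\nu \neq 0$. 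The implicit function theorem then produces a real-analytic arc $\theta = \theta(r)$ defined for $r$ in a one-sided neighborhood of $0$ with $\theta(0) = \theta_0$ and $F(r,\theta(r)) = 0$. Since $\theta_0$ lies strictly between $0$ and $\beta$ and $\theta$ is continuous, this arc lies in $P$ for small $r$ and terminates at the origin; this is the desired arc in $\Zcal(u)\cap P$.

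The only real obstacle is ensuring that the arc produced in the $c_1 = 0$ case genuinely enters the interior of $P$ rather than staying on $\partial P$, but this is immediate once we identify the interior zero $\theta_0 = \beta/m \in (0,\beta)$ of the leading term, which is guaranteed by $m \geq 2$ and $\nu\beta = \pi$. The remainder of the argument is a routine application of the implicit function theorem to the normalized analytic function $F$, exactly parallel to the mixed case.
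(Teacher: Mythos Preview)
Your proof is correct and follows the same approach the paper indicates (the paper simply states that the proof is nearly identical to that of Lemma~\ref{mixedarcsatvertex}). One small correction: the function $F(r,\theta)=r^{-m\nu}u(r,\theta)$ need not be real-analytic in $r$ when $\nu=\pi/\beta$ is irrational, since the series for $F$ contains powers $r^{(n-m)\nu}$; however, because $\beta\in(0,\pi)$ forces $\nu>1$, each such exponent exceeds $1$, so $F$ extends to a $C^1$ function on $[0,\epsilon)\times[0,\beta]$ with $\partial_\theta F(0,\theta_0)\neq 0$, and the $C^1$ implicit function theorem is all your argument actually uses.
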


\begin{prop}\label{regularity}
    Suppose that $P$ is a convex geodesic polygon in $\Mk$. Let $u$ be a mixed eigenfunction of $P$ satisfying Dirichlet conditions on some set of edges of $P$ and Neumann conditions on the other edges of $P$ (we may have full Dirichlet or Neumann conditions). If each mixed vertex of $P$ is non-obtuse, then $u$ is an element of the Sobolev space $H^2(P)$. 
\end{prop}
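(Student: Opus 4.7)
The plan is to decompose the problem into interior behavior, behavior on the interiors of the edges, and behavior near the vertices. In the interior of $P$ the eigenfunction $u$ is real-analytic. On the interior of each edge, reflection across the edge (the unique isometric reflection for Neumann boundaries, the odd extension for Dirichlet boundaries) produces a local analytic extension of $u$, as in Section \ref{nodalsection}. Hence $u \in C^{\infty}(\overline{P} \setminus V)$, where $V$ is the vertex set, and it remains to verify that $u \in H^2$ in a small neighborhood of each vertex.

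Near a vertex, I would pass to the Poincar\'e disk model (\ref{poincaremetric}), in which the Riemannian $L^2$ and $H^2$ norms are uniformly equivalent to their Euclidean counterparts on any fixed compact set, so the estimates can be carried out in Euclidean polar coordinates. The elementary ingredient is that a function of the form $r^{\alpha}\psi(\theta)$ with $\psi$ smooth and bounded lies in $H^2$ of a small disk around the origin whenever $\alpha > 1$, since the second derivatives are of size $r^{\alpha-2}$ and $\int_0^{\epsilon} r^{2\alpha-3}\,dr$ converges precisely when $\alpha > 1$. The boundary case $\alpha = 1$ arises here only as $r\cos\theta = x$ or $r\sin\theta = y$, which are smooth Cartesian coordinates and hence trivially in $H^2$.

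I would then verify the hypothesis of this criterion vertex-by-vertex using Section \ref{vertexsection}. Taylor-expanding $g_{n\nu}(r^2)$ turns each of (\ref{neumannexpansion}), (\ref{mixedexpansion}), (\ref{dirichletexpansion}) into a double series with exponents of the form $n\nu + 2j$ (at Neumann or Dirichlet vertices) or $(n+1/2)\nu + 2j$ (at mixed vertices), where $\nu = \pi/\beta$. The smallest positive exponents are $\nu$ and $\nu/2$, respectively. Convexity ($\beta \leq \pi$) gives $\nu \geq 1$ at Neumann and Dirichlet vertices, and non-obtuseness ($\beta \leq \pi/2$) gives $\nu/2 \geq 1$ at mixed vertices, so every singular exponent is $\geq 1$. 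The equality cases are harmless: $\beta = \pi$ at a Neumann or Dirichlet vertex is a fictitious vertex where the boundary is actually smooth, while $\beta = \pi/2$ at a mixed vertex produces a leading term proportional to $r\sin\theta \cdot g_1(r^2) = y\, g_1(x^2 + y^2)$, which is smooth.

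The main technical obstacle, and the step I expect to require the most care, is upgrading termwise $H^2$-membership to $H^2$-convergence of the full series on a small sector at each vertex. This should follow from the uniform convergence of the Frobenius series on compact subsets of the interior (noted after (\ref{neumannexpansion})) together with the explicit lower bounds on the exponents, via a dominated-convergence and truncation argument: the partial sums are smooth, match $u$ pointwise, and the tails admit uniform control in the $H^2$-norm on the sector because every exponent in the tail is bounded below by a constant strictly greater than $1$ (once finitely many terms are removed). A finite partition of unity on $\overline{P}$ subordinate to a cover by interior sets, edge-neighborhoods, and vertex-neighborhoods then assembles these local $H^2$ estimates into the global bound $u \in H^2(P)$.
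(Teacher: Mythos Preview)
Your proposal is correct and follows essentially the same approach as the paper: reflect across edges to get smoothness away from vertices, then use the vertex expansions (\ref{neumannexpansion}), (\ref{mixedexpansion}), (\ref{dirichletexpansion}) term by term to verify $H^2$-integrability near each vertex. The paper's proof is in fact more terse than yours---it simply asserts that ``the statement can be seen by integrating derivatives of the expansions term by term''---so your explicit exponent analysis ($\nu>1$ at convex Neumann/Dirichlet vertices, $\nu/2\geq 1$ at non-obtuse mixed vertices) and your handling of the borderline cases fill in details the paper leaves implicit.
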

\begin{proof}
    As in Section \ref{nodalcritical}, we can extend $u$ by reflection to a neighborhood of each Neumann edge to be real analytic. Near a Dirichlet edge, a similar argument applies by extending the eigenfunction to be odd with respect to reflection over the edge. This shows that $u$ is locally in $H^2$ away from the vertices. Near the vertices, the statement can be seen by integrating derivatives of the expansions (\ref{neumannexpansion}), (\ref{mixedexpansion}), and (\ref{dirichletexpansion}) term by term.
\end{proof}

\section{Inequalities between small eigenvalues}\label{ineqsection}
In this section we prove several inequalities involving first mixed and second Neumann eigenvalues. Several of our main results are proved via contradiction by constructing test functions that violate these inequalities.\\
\indent Let us recall the variational formulation for the eigenvalue problem (\ref{mixedeqn}). For $\Omega$ and $D\subseteq\partial \Omega$ as in (\ref{mixedeqn}), let $H^1_D(\Omega)$ be the Sobolev space of $L^2(\Omega)$ functions whose first derivatives are square integrable and that vanish (in the sense of trace) on $D$. Let $\mathcal{V}_k$ denote the set of all $k$-dimensional subspaces of $H^1_D$. Then we have
\[\lambda_k^D=\min_{V\in \mathcal{V}_k}\max_{u\in V\setminus\{0\}}\frac{\int_{\Omega}|\Gk u|_{\kappa}^2}{\int_{\Omega}u^2},\] where $\kappa$ equals the curvature of $\Omega$. When the $k$th eigenvalue is simple, the unique minimizing subspace is the span of the first $k$ eigenfunctions, and the only maximizers within that subspace are the $k$-th eigenfunctions. We will be mostly concerned with first mixed and second Neumann eigenvalues (where the Neumann problem corresponds to $D=\emptyset$), for which we have the more specific formulas
\begin{equation}\label{mixedminmax}
    \lambda_1^D=\min_{u\in H^1_D(\Omega)\setminus\{0\}}\frac{\int_{\Omega}|\Gk u|_{\kappa}^2}{\int_{\Omega}u^2}
\end{equation}
and
\begin{equation}\label{neumannminmax}
    \mu_2=\min_{V\in \mathcal{V}_2}\max_{u\in V\setminus\{0\}}\frac{\int_{\Omega}|\Gk u|_{\kappa}^2}{\int_{\Omega}u^2}.
\end{equation}

Recall the following well-known fact about the spectrum of hyperbolic triangles. 

\begin{lem}\label{onequarter}
    Let $T$ be a geodesic triangle in $M_{-1}$. Then the second Neumann eigenvalue of the Laplacian on $T$ is greater than $\frac{1}{4}$.
\end{lem}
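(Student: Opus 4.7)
The plan is to reduce the claim to McKean's classical Dirichlet lower bound: for every relatively compact open $\Omega \subsetneq M_{-1}$, one has $\lambda_1^D(\Omega) > \frac{1}{4}$. Indeed, $\frac{1}{4}$ is the infimum of the $L^2$ spectrum of $-\Delta$ on the simply connected space form $M_{-1}$, and it is never realized as an $L^2$ eigenvalue on a proper open subset.

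First I would let $u$ be a second Neumann eigenfunction on $T$ with eigenvalue $\mu$ and invoke Courant's nodal domain theorem to split $T$ into exactly two nodal domains $T_+$ and $T_-$. The restriction $u|_{T_+}$ is then a first mixed eigenfunction on $T_+$, with Dirichlet data on the nodal line $\gamma = \partial T_+ \cap T$ and Neumann data on $\partial T_+ \cap \partial T$, so $\mu$ equals the first mixed eigenvalue of $T_+$ with this splitting.

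Next I would convert the mixed eigenvalue into a pure Dirichlet eigenvalue by reflection. Extending $u|_{T_+}$ across each edge of $T$ contained in $\partial T_+$ by even reflection produces, since reflections across geodesics in $M_{-1}$ are isometries and even reflection preserves the Neumann condition, an eigenfunction $\tilde{u}$ with eigenvalue $\mu$ on a bounded open set $\tilde{T} \subset M_{-1}$. The zero set of $\tilde{u}$ on $\tilde{T}$ consists of reflected copies of $\gamma$. Let $V$ be the connected component of $\{\tilde{u} > 0\}$ containing $T_+$; on $V$, $\tilde{u}$ is a first Dirichlet eigenfunction with eigenvalue $\mu$. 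Applying McKean's inequality to the bounded proper open subset $V \subset M_{-1}$ then yields $\mu > \frac{1}{4}$.

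The hard part will be ensuring that $V$ is genuinely bounded and that its entire boundary consists of (reflected copies of) the nodal set, without residual Neumann arcs. When the Neumann portion of $\partial T_+$ meets only a single edge of $T$, a single reflection suffices. In the general case, where the Neumann boundary meets several edges of $T$, one would iterate the reflection across each remaining Neumann arc; if at a shared vertex the angle fails to be a rational multiple of $\pi$, the naive iteration generates an infinite dihedral orbit, and one instead passes to the Riemannian double of $T$---a compact hyperbolic $2$-orbifold on which the Neumann eigenfunctions of $T$ correspond exactly to the symmetric eigenfunctions---and recovers the bound by lifting to the universal cover $M_{-1}$ and invoking McKean there.
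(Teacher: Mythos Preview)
The paper does not actually prove this lemma; it simply cites Buser's book. Your proposal attempts an independent proof, and the opening idea---pass to a nodal domain and try to reduce to McKean's bound $\lambda_1^D(\Omega)>\tfrac14$ for bounded $\Omega\subset M_{-1}$---is natural. The single-reflection case you describe is fine: if the Neumann part of $\partial T_+$ lies in one edge $e$, reflecting across $e$ yields a bounded domain in $M_{-1}$ on which the even extension is a positive Dirichlet eigenfunction, and McKean finishes.

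The gap is in the multi-edge case, which is in fact the generic one (the nodal arc of a second Neumann eigenfunction on a triangle typically joins two different edges, so \emph{both} nodal domains meet at least two edges). Your fallback to the Riemannian double does not rescue the argument. First, the double of a hyperbolic triangle is a sphere with three cone points of angles $2\alpha,2\beta,2\gamma$; this is a genuine orbifold with developing map onto $M_{-1}$ only when each $\alpha_i=\pi/n_i$ for integers $n_i$, so for generic triangles there is no ``universal cover $M_{-1}$'' to lift to. Second, even when the angles are of the form $\pi/n_i$, the lift of an eigenfunction is a $\Gamma$-automorphic function on $M_{-1}$, not an $L^2$ function, and the lift of a nodal domain is an unbounded $\Gamma$-invariant set. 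McKean's inequality is a statement about the bottom of the $L^2$ spectrum of $M_{-1}$ (equivalently, Dirichlet eigenvalues on bounded subdomains); it says nothing about automorphic eigenfunctions, and indeed compact hyperbolic surfaces and orbifolds can carry Laplace eigenvalues strictly below $\tfrac14$. So ``lift to $M_{-1}$ and invoke McKean'' is not a valid step, and the proposal as written does not close.

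A correct argument requires something specific to hyperbolic triangles (or convex hyperbolic polygons); this is what the cited pages of Buser provide.
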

\begin{proof}
    See pp. 219-220 of \cite{buser}.
\end{proof}

\begin{lem}\label{mixedineq}
    Let $T$ be a geodesic triangle in $\Mk$ with $\kappa<0$. Let $e$ be an edge of $T$. The second Neumann eigenvalue of $\Dk$ is less than or equal to the first eigenvalue for the mixed problem with Neumann conditions on $e$ and Dirichlet conditions on $\partial T\setminus e$.
\end{lem}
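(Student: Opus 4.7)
The plan is a Filonov-style test-function construction; the key ingredient is a complex-valued hyperbolic plane wave satisfying $|\nabla\psi|^2 = \lambda|\psi|^2$ pointwise---an identity available in the hyperbolic setting but not in the Euclidean or spherical ones. After rescaling, I assume $\kappa = -1$ and work in the upper half-plane model of $\Mm$ with $e$ on the $y$-axis and $T \subseteq \{x > 0\}$. Let $\lambda$ denote the first mixed eigenvalue in the statement. Reflecting $T$ across $e$ produces a geodesic polygon $T^*$ whose first Dirichlet eigenfunction is positive and hence symmetric across $e$ by uniqueness; its restriction to $T$ is a first mixed eigenfunction of $T$, so $\lambda = \lambda_1^{\text{Dir}}(T^*)$. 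Since the $L^2$-spectrum of $-\Delta$ on $\Mm$ has infimum $\tfrac{1}{4}$ and $T^*$ is bounded, $\lambda > \tfrac{1}{4}$, so I may write $\lambda = \tfrac{1}{4} + \tau^2$ for some $\tau > 0$.

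Introduce the complex plane wave $\psi(x,y) := y^{1/2 + i\tau}$. A direct computation with $g = y^{-2}(dx^2 + dy^2)$ yields three key properties: $-\Delta\psi = \lambda\psi$ on $\Mm$; $|\nabla\psi|^2 = \lambda|\psi|^2$ pointwise; and $\partial_\nu\psi \equiv 0$ on $e$, since $\psi$ has no $x$-dependence. Let $v$ be a first mixed eigenfunction of $T$, and form the complex two-dimensional subspace $V := \mathrm{span}_{\Cbb}(v,\psi) \subseteq H^1(T;\Cbb)$; these are $\Cbb$-linearly independent because $v$ is real-valued while $\psi$ is not for $\tau > 0$.

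The core computation shows that $R(u) := \int_T |\nabla u|^2 \,/\, \int_T |u|^2$ equals $\lambda$ for every nonzero $u = av + b\psi \in V$. Three integral identities suffice: $\int_T |\nabla v|^2 = \lambda \int_T v^2$ by Green's identity and the mixed boundary conditions on $v$; $\int_T |\nabla\psi|^2 = \lambda \int_T |\psi|^2$ by the pointwise identity; and $\int_T \nabla v \cdot \overline{\nabla \psi} = \lambda \int_T v\bar\psi$ by applying Green's identity so the derivative falls on $\bar\psi$, whereupon the boundary contribution $\int_{\partial T} v\,\partial_\nu\bar\psi$ vanishes because $v|_{\partial T \setminus e} = 0$ on the Dirichlet portion and $\partial_\nu\bar\psi|_e = 0$ on the Neumann portion. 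Combining these yields $\int_T |\nabla u|^2 = \lambda \int_T |u|^2$, so $R(u) = \lambda$. The complex min-max characterization of $\mu_2$---which agrees with the real one---then gives $\mu_2(T) \le \max_{u \in V \setminus \{0\}} R(u) = \lambda$.

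The principal obstacle, and what makes the argument hyperbolic-specific, is the requirement $\lambda \ge |\kappa|/4$ so that $\tau$ can be taken real. This rests on the positivity of the spectral bottom $|\kappa|/4$ of $-\Delta_\kappa$ on $\Mk$, a feature without analogue on the sphere---precisely why Theorem \ref{sphereanalogue} must remain conditional.
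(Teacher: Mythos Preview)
Your proof is correct and follows essentially the same Filonov-style argument as the paper's: both span a two-dimensional test space for the Neumann min--max using the first mixed eigenfunction together with the hyperbolic plane wave $y^{1/2+i\tau}$, exploiting the pointwise identity $|\nabla\psi|^2=\lambda|\psi|^2$ and the vanishing boundary cross-term. The only noteworthy difference is where the plane-wave exponent comes from: you set $\tfrac14+\tau^2=\lambda$ and justify $\lambda>\tfrac14$ via the reflection identity $\lambda=\lambda_1^{\mathrm{Dir}}(T^*)$, whereas the paper puts $\mu$ in the exponent ($s=\tfrac12+\tfrac{i}{2}\sqrt{4\mu-1}$), argues by contradiction from $\lambda<\mu$, and invokes Lemma~\ref{onequarter} (citing Buser) for $\mu>\tfrac14$---so your route is marginally more self-contained.
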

\begin{proof}
    We prove the result for $\kappa=-1$. The statement then holds for other $\kappa<0$ by rescaling the metric. Let $\mu$ denote the second Neumann eigenvalue, and let $\lambda$ denote the first mixed eigenvalue for the aforementioned eigenvalue problem. Suppose toward a contradiction that $\lambda<\mu$, and let $\phi$ be a first mixed eigenfunction associated to $\lambda$. Suppose that $T$ is embedded in the upper half-plane model of $\Mm$ such that $e$ is contained in a vertical geodesic. Let $w(x,y)=y^s$ where $s=\frac{1}{2}+\frac{i}{2}\sqrt{4\mu-1}$. Then one can compute that $\Dm w=\mu w$. Note further that $w$ satisfies Neumann boundary conditions on $e$ and that $|\Gm w|_{-1}^2=|s|^2|w|^2$ pointwise. Because $T$ is a triangle and $\kappa<0$, Lemma \ref{onequarter} applies to give that $4\mu-1>0$, so we get \[\mu=s(1-s)=\Big(\frac{1}{2}+\frac{i}{2}\sqrt{4\mu-1}\Big)\Big(\frac{1}{2}-\frac{i}{2}\sqrt{4\mu-1}\Big)=s\overline{s}=|s|^2,\] which implies that $|\Gm w|_{-1}^2=\mu |w|^2$ pointwise. We then have, for any $a,b\in \Cbb$, 
    \begin{align*}
        \int_T|\Gm(a\phi+bw)|_{-1}^2&=|a|^2\int_T|\Gm\phi|_{-1}^2+2\text{Re}\Big(a\overline{b}\int_T\langle\Gm\phi,\Gm w\Rm\Big)+|b|^2\int_T|\Gm w|_{-1}^2\\
        &=|a|^2\lambda\int_T|\phi|^2+2\text{Re}\Big(a\overline{b}\int_T\langle\Gm\phi,\Gm w\Rm\Big)+|b|^2\mu\int_T|w|^2.
    \end{align*}
    Integration by parts gives 
    \[\int_T\langle\Gm\phi,\Gm w\Rm=\mu\int_T\phi\overline{w}+\int_{\partial T}\phi\partial_{\nu}\overline{w}=\mu\int_T\phi\overline{w}\] since $\phi$ vanishes on $\partial T\setminus e$ and $w$ satisfies Neumann conditions on $e$. Combining this with the above and the supposition that $\lambda<\mu$ gives 
    \[\int_T|\Gm(a\phi+bw)|_{-1}^2< \mu\Big(|a|^2\int_T|\phi|^2+2\text{Re}\Big(a\overline{b}\int_T\phi\overline{w}\Big)+|b|^2\int_T|w|^2\Big)=\mu\int_T|a\phi+bw|^2.\]
    Since $\phi$ vanishes on $\partial T\setminus e$ and $w$ does not, these functions are linearly independent. Since the above inequality is strict, this contradicts the variational formula (\ref{neumannminmax}) for $\mu$. 
\end{proof}

\begin{remk}
    The proof of Lemma \ref{mixedineq} is inspired by the proof of Theorem 3.1 of \cite{lotoroh}. Our proof actually applies to any geodesic polygon with constant negative curvature for which the second Neumann eigenvalue is at least one quarter. It would be interesting to know whether the statement of the lemma applies to other constant-curvature polygons as in the Euclidean case. 
\end{remk}

\section{Killing vector fields in spaces of constant curvature}\label{killingfields}
In many works involving the hot spots conjecture in Euclidean domains (e.g., \cite{jn}, \cite{judgemondal}, \cite{yao1}), one of the central ideas is that directional and rotational derivatives of eigenfunctions are also eigenfunctions. Viewing these differential operators as vector fields, the natural generalization of this technique to curved domains is to use Killing fields. These are vectors fields that are the infinitesimal generators of one-parameter families of isometries (see, for instance, \cite{lee} Problems 5-22 and 6-24). On any Riemannian manifold $(M,g)$ with a Killing field $X$, we therefore have that $-\Delta_g X=-X\Delta_g$ since the Laplacian commutes with pullbacks by isometries. Moreover, if $\phi$ is an eigenfunction of $-\Delta_g$ with eigenvalue $\lambda$, then we have \[-\Delta_g(X\phi)=X(-\Delta_g\phi)=\lambda X\phi,\]
so $X\phi$ also satisfies the eigenfunction equation for $-\Delta_g$ with eigenvalue $\lambda$ (though in general $X\phi$ does not satisfy any particular boundary conditions). In the following sections we will use functions of the form $Xu$ with $u$ a low-energy eigenfunction to construct test functions for the functionals in (\ref{mixedminmax}) and (\ref{neumannminmax}). These test functions will allow us to study nodal and critical sets of these eigenfunctions in more detail. In this section, we collect some facts about the Killing fields on $\Mk$. For $\kappa=0$, these are exactly the constant and roational vector fields, so we restrict attention to $\kappa\neq 0$. 
\subsection{Hyperbolic Killing fields}
For $\kappa< 0$, there are three distinct types of Killing fields corresponding to the classification of hyperbolic isometries, and we will make use of only two of these: loxodromic and elliptic vector fields. Since $\Mk$ and $M_{\tau}$ for $\kappa,\tau<0$ differ only by a scalar multiple of the metric, we state our results only for the hyperbolic plane $\Mm$. For $-1\neq \kappa<0$, similar statements and formulae hold.\\
\indent Recall that, in the upper half-plane $H$, a \textit{loxodromic} isometry\footnote{Loxodromic isometries are often referred to as \textit{hyperbolic} isometries in the literature.} is an isometry conjugate to the map $(x,y)\mapsto (a x,a y)$ for some $a>0$. The infinitesimal generator of the latter family of isometries is given by the vector field \[L:=x\partial_x+y\partial_y.\] Because these isometries map the $y$-axis to itself, $L$ is tangential to the $y$-axis (this can also be seen directly from the formula for $L$). Moreover, $L$ is tangent to the equidistant curves to the $y$-axis (see Section \ref{geometry}). Similarly, other loxodromic isometries come in one-parameter families determined by the unique geodesic that they fix, and we refer to the vector fields generating these familes as \textit{loxodromic Killing fields}. We call the unique geodesic to which a loxodromic Killing field $L$ is tangent the \textit{axis} of $L$. Since the isometry group of $\Mm$ is transitive on the set of geodesics, for any geodesic $\gamma\subseteq \Mm$, there exists a loxodromic Killing field with axis $\gamma$.
\begin{nota}
    Let $e$ be an edge of of geodesic triangle $T\subseteq\Mm$. Then, in any model, we often denote a Loxodromic vector field tangent to $e$ by $L_e$.
\end{nota}
\indent In the Poincar\'e disk $D_{-1}$, it is easy to see from the metric (\ref{poincaremetric}) that the Euclidean rotation about the origin by any angle defines an isometry $D_{-1}\to D_{-1}$ that is called an \textit{elliptic} isometry. An infinitesimal generator of these isometries is given by \[R:=-y\partial_x+x\partial_y.\] Moreover, any conjugate of this family of isometries is another family of elliptic isometries, and we refer to a generator of one of these families as an \textit{elliptic Killing field}. As with $R$, an elliptic Killing field has a unique point at which it vanishes. We call this point the \textit{center} of the Killing field. By the transitivity of the isometry group of $\Mm$, for any $p\in \Mm$, there exists an elliptic Killing field centered at $p$.

\subsection{Spherical Killing fields}
On $2$-spheres, the set of all Killing fields has a simpler description since there is only one type: rotational. For any fixed line in $\Rbb^3$ that passes through the origin, there is a unique Killing field (up to scaling) on $S^2$ with zeros at the intersection of the sphere with this line.\\
\indent Though there is only one type of Killing field on a sphere, for ease of exposition, we apply the terminology for hyperbolic Killing fields to spherical Killing fields. Namely, given any geodesic on a sphere, there is a unique (up to scaling) Killing field tangent to this geodesic. We call this Killing field loxodromic, and we call this geodesic its axis. Moreover, for $\kappa>0$, for any point $p\in \Mk$, there is a unique (up to scaling) Killing field vanishing at $p$. We call this Killing field elliptic with center $p$. 

\subsection{Properties of Killing fields} We will make use of two key properties of Killing fields on $\Mk$. The first is clear from construction: An elliptic vector field is orthogonal to every geodesic passing through its center. The second follows from a well-known general fact about Killing fields. 
\begin{lem}\label{nowhereortho}
    Let $L$ be a loxodromic Killing field on $\Mk$ with axis $\gamma$. Let $\eta\subseteq \Mk$ be a geodesic that does not intersect $\gamma$ orthogonally. Then $L$ is nowhere orthogonal to $\eta$. Let $R$ be an elliptic Killing field on $\Mk$ with center $p$. If $\eta$ is a geodesic not containing $p$, then $L$ is nowhere orthogonal to $\eta$. 
\end{lem}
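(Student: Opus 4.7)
The plan is to use the geometric description of the orbits of Killing fields together with the two-dimensionality of $\Mk$, which forces any two nonzero vectors at a point both orthogonal to a common nonzero vector to be parallel.

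The key preliminary observation, which I would establish first, concerns the direction of each Killing field. The flow of a Killing field consists of isometries, so in the elliptic case the flow of $R$ fixes $p$ and preserves geodesic distance to $p$, making $R$ tangent to the geodesic circles about $p$. Since in dimension two the geodesic circles about $p$ and the geodesics through $p$ form an orthogonal net, $R(q)$ is orthogonal at every $q \neq p$ to the tangent of the unique geodesic joining $p$ to $q$. Analogously, the loxodromic flow of $L$ preserves $\gamma$ setwise and hence preserves distance to $\gamma$, so $L$ is tangent to the equidistant curves of $\gamma$; these together with the geodesics perpendicular to $\gamma$ form an orthogonal net, so at every $q \notin \gamma$ the vector $L(q)$ is orthogonal to the tangent of the unique geodesic through $q$ perpendicular to $\gamma$, while at $q \in \gamma$ the vector $L(q)$ is tangent to $\gamma$.

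Given this, the lemma follows by a short argument by contradiction. For the loxodromic case, suppose $L(q) \perp T_q\eta$ at some $q \in \eta$. If $q \notin \gamma$, let $\sigma$ be the perpendicular geodesic from $q$ to $\gamma$; then $L(q)$ is orthogonal to both $T_q\eta$ and $T_q\sigma$, so by two-dimensionality these tangent vectors are parallel, and two geodesics sharing a point and a tangent direction coincide. Hence $\eta = \sigma$, so $\eta$ meets $\gamma$ orthogonally, contradicting the hypothesis. If $q \in \gamma$, then $L(q)$ is tangent to $\gamma$ so $T_q\eta \perp T_q\gamma$, giving the same contradiction. The elliptic case is entirely analogous: if $R(q) \perp T_q\eta$ at some $q \in \eta$, then $q \neq p$ since $p \notin \eta$, the geodesic $\sigma$ from $p$ to $q$ exists, and the same argument forces $\eta = \sigma$, so that $\eta$ contains $p$, a contradiction. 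The only technical subtlety is the existence of the perpendicular geodesic $\sigma$ in the loxodromic case, which is immediate in each of our models since any point outside a maximal geodesic in $\Mk$ admits a unique foot of perpendicular.
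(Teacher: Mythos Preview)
Your proof is correct and follows a route close to, but distinct from, the paper's. Both arguments rest on the same geometric input: $L$ is tangent to the equidistant curves of $\gamma$ (hence orthogonal to the perpendiculars to $\gamma$), and $R$ is tangent to the geodesic circles about $p$ (hence orthogonal to the radial geodesics). The difference lies in how this input is deployed. The paper invokes the general fact (valid on any Riemannian manifold) that a Killing field is either everywhere or nowhere orthogonal to a given geodesic, and then exhibits a single point of $\eta$ at which the field is not orthogonal; the all-or-nothing principle finishes the argument. You instead argue pointwise: at any $q\in\eta$ where orthogonality held, two-dimensionality would force $\eta$ to coincide with the perpendicular (or radial) geodesic through $q$, contradicting the hypothesis directly. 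Your approach is more self-contained in that it avoids citing the Killing-field lemma, at the cost of leaning more explicitly on the two-dimensional linear algebra; the paper's approach is shorter once that lemma is granted and would generalize more readily to higher dimensions.
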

\begin{proof}
    Recall (see \cite{lee} Exercise 6-24) that, on any Riemannian manifold, a Killing field is either everywhere or nowhere orthogonal to a given geodesic. Suppose that $L$ is a loxodromic Killing field on $\Mk$ with axis $\gamma$. If $\eta$ is not orthogonal to $\gamma$, then there exists an equidistant curve to $\gamma$ intersecting $\eta$ non-orthogonally. Since $L$ is tangent to these equidistant curves, there is a point on $\eta$ at which $L$ is not orthogonal to $\eta$. Thus, $L$ is nowhere orthogonal to $\eta$.\\
    \indent Let $R$ be a rotational vector field centered at $p\in \Mk$, and let $\eta$ be a geodesic not containing $p$. There exists a point $q\in\eta$ such that the geodesic joining $p$ to $q$ is not tangent to $\eta$. Since $R$ is orthogonal to this geodesic, it is nowhere orthogonal to $\eta$. 
\end{proof}
\section{Nodal sets of low-energy eigenfunctions and their derivatives}\label{nodalderivsection}
We now apply the Killing fields and their properties introduced in the last section to first mixed and second Neumann eigenfunctions on a geodesic polygon $P$ in $\Mk$. Fix a mixed eigenvalue problem by defining $D,N\subseteq\partial P$. If $D\neq \emptyset$, we let $v$ be a first mixed eigenfunction and let $\lambda$ be the corresponding first mixed eigenvalue. If $D=\emptyset$, we let $u$ be a second Neumann eigenfunction and let $\mu$ be the corresponding second Neumann eigenvalue.

\begin{lem}\label{neumannloop}
    Let $T$ be a geodesic triangle in $\Mk$. Let $e\subseteq N$ be the closure of an edge of $T$, and let $X$ be a Killing field tangent to $e$. 
    \begin{enumerate}
        \item If $D\neq\emptyset$, let $\phi=Xv$. Then $\Zcal(\phi)$ cannot contain a loop or an arc with two endpoints in $e$ unless $\phi$ vanishes identically on $\partial T\setminus e$.
        \item If $D=\emptyset$ and $\kappa<0$, let $\phi$ equal either $u$ or $Xu$. Then $\Zcal(\phi)$ cannot contain a loop or an arc with two endpoints in $e$ unless $\phi$ vanishes identically on $\partial T\setminus e$.
    \end{enumerate}
\end{lem}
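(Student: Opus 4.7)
My plan is a proof by contradiction using the extension of a nodal piece of $\phi$ by zero as a test function in a Rayleigh quotient. The first observation is that in all three cases $\phi$ satisfies the Neumann condition $\partial_\nu \phi = 0$ on $e$: for $\phi = u$ this is the hypothesis, while for $\phi = Xu$ or $Xv$ it follows because the flow of $X$ preserves the geodesic $e$ (as $X$ is tangent to $e$), so the Neumann condition on $e$ is preserved by the flow and therefore by its infinitesimal generator applied to $u$ or $v$.

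Now assume toward contradiction that $\phi$ does not vanish identically on $\partial T \setminus e$ but that $\Zcal(\phi)$ contains a loop or an arc $\alpha$ with both endpoints on $e$. Then $\alpha$, together with a subsegment of $e$ in the arc case (or by itself in the loop case), encloses a subregion $R \subsetneq T$ with $\partial R \cap \partial T \subset e$. By shrinking to an innermost such region if necessary, I may pick a nodal domain $\Omega \subset R$ on which $\phi|_\Omega$ has constant sign; then $\phi|_\Omega$ is a first eigenfunction of the mixed problem on $\Omega$ with Dirichlet data on $\partial\Omega \cap T$ and Neumann data on $\partial\Omega \cap e$, with eigenvalue equal to $\lambda$ (part 1) or $\mu$ (part 2). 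Let $\tilde\phi$ denote the extension of $\phi|_\Omega$ by zero to all of $T$. Since $\phi$ vanishes continuously on $\partial\Omega \cap T$, we have $\tilde\phi \in H^1(T)$, and a Green's identity computation (using $\phi = 0$ on the nodal portion of $\partial\Omega$ and $\partial_\nu \phi = 0$ on $\partial\Omega \cap e$) shows that the Rayleigh quotient of $\tilde\phi$ equals the original eigenvalue.

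In part (1), since $D \subset \partial T \setminus e$ and $\tilde\phi$ vanishes on $T \setminus \Omega$, we have $\tilde\phi \in H^1_D(T)$, so (\ref{mixedminmax}) gives $\lambda_1^D(T) \leq \lambda = \lambda_1^D(T)$. Equality forces $\tilde\phi$ to minimize the Rayleigh quotient and hence to be a first mixed eigenfunction of the original problem, contradicting strict positivity of such eigenfunctions together with $\tilde\phi \equiv 0$ on the open set $T \setminus \Omega$. For part (2), the same construction gives $\tilde\phi \in H^1_{\partial T \setminus e}(T)$ with Rayleigh quotient $\mu$; combining this with Lemma \ref{mixedineq}, which supplies the reverse inequality $\mu \leq \lambda_1^{\partial T \setminus e}(T)$ for the mixed problem with Neumann on $e$ and Dirichlet on $\partial T \setminus e$, forces $\tilde\phi$ to be a first mixed eigenfunction of that problem, producing the same contradiction. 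The main obstacle I anticipate is the topological step of selecting $\Omega$ so that $\partial\Omega \cap \partial T \subset e$ and $\phi|_\Omega$ has constant sign, since this construction breaks down precisely when $\alpha$ traces along all of $\partial T \setminus e$ --- the degenerate scenario singled out by the lemma's ``unless'' clause.
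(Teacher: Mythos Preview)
Your proposal is correct and follows essentially the same approach as the paper: both argue by contradiction, cut out the region bounded by the loop or arc (together with a piece of $e$), use that $\phi$ satisfies Neumann conditions on $e$ because $X$ is tangent to $e$, and then plug the extension by zero into the variational characterization to force the test function to be a first mixed eigenfunction vanishing on an open set. The only cosmetic differences are that the paper uses the whole enclosed disk rather than passing to a nodal domain inside it, and invokes real-analyticity rather than strict positivity for the final contradiction; for part (2) both arguments combine the test-function inequality with Lemma~\ref{mixedineq} in the same way.
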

\begin{proof}
    For (1), suppose to the contrary that $\Zcal(\phi)$ contains such a loop or arc and that $\Zcal(\phi)$ does not contain $\partial T\setminus e$. Let $\Omega$ be the topological disk bounded by this arc and, if necessary, a subset of the edge $e$. Because $\Zcal(\phi)$ does not contain $\partial T\setminus e$, $T\setminus \Omega$ has non-empty interior. By Proposition \ref{regularity}, the function $\psi:=\phi\cdot\chi_{\Omega}$ (where $\chi_{\Omega}$ is the indicator function for $\Omega$) is a valid test function for the mixed problem defined by $D$ and $N$. Moreover, since $X$ is tangent to $e$, $Xv$ satisfies Neumann boundary conditions on $e\cap \partial \Omega$. Integrating by parts, we have \[\int_T|\nabla\psi|^2=\lambda\int_T|\psi|^2+\int_{\partial T}\psi\partial_{\nu}\overline{\psi}=\lambda\int_T|\psi|^2,\] so we see that $\psi$ is a first mixed eigenfunction of $T$. Because $\psi$ vanishes on an open set, this contradicts the fact that eigenfunctions are real-analytic.\\
    \indent For (2), let $\phi$ equal either $u$ or $Xu$ and suppose that the negation holds. Let $\lambda_1$ be the first mixed eigenvalue for the problem with Neumann conditions on $e$ and Dirichlet conditions on $\partial T\setminus e$. If $\kappa<0$, then Lemma \ref{mixedineq} implies that $\mu\leq \lambda_1$, but the same computation as in the last paragraph shows that $\lambda_1<\mu$, contradicting Lemma \ref{mixedineq}.
\end{proof}

The next three corollaries mirror Theorem 5.2 and Corollaries 5.3-5.5 of \cite{judgemondal}.

\begin{coro}\label{nodalsimplearc}
    Let $u$ be a second Neumann eigenfunction on a geodesic triangle $T\subseteq \Mk$ with $\kappa<0$. Then $\Zcal(u)$ is a simple arc with distinct endpoints in $\partial T$, and these endpoints do not lie in the closure of the same edge of $T$. In particular, $u$ cannot vanish at two distinct vertices of $T$. 
\end{coro}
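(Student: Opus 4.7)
The plan is to combine Courant's nodal domain theorem with Lemma \ref{neumannloop} to obtain planar-graph constraints on $N := \Zcal(u) \cap \overline{T}$ strong enough to force $N$ to be a single simple arc, after which the triangle's edge combinatorics yields the remaining conclusions.

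First, I would upgrade Lemma \ref{neumannloop}(2) by eliminating its ``unless'' clause: if $u$ vanished identically on $\partial T \setminus e$ for some edge $e$, then $u$ would be admissible in the variational problem (\ref{mixedminmax}) for the first mixed eigenvalue $\lambda_1$ with Dirichlet data on $\partial T\setminus e$, giving $\lambda_1 \le \mu$, while Lemma \ref{mixedineq} gives $\mu \le \lambda_1$; equality forces $u$ to be a first mixed eigenfunction, which is sign-definite, contradicting that $u$ changes sign. Hence $N$ contains neither a simple closed curve nor an arc with both endpoints in the closure of a single edge. Since $u$ is orthogonal to constants, it changes sign, so Courant's theorem applied to the real-analytic extension $\overline{u}$ of Section \ref{nodalsection} shows that $T \setminus N$ has exactly two components.

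Next, I would extract the planar-graph structure of $N$. Lemma \ref{nodalcritical} applied to $\overline{u}$ implies that no interior point of $N$ is a leaf: each non-critical interior nodal point lies on a single smooth arc (graph degree $2$), and each interior critical point has graph degree $2n \ge 4$. The same lemma together with the reflection symmetry of the Neumann extension, combined with Lemma \ref{nodalintersectgeodesic} (which forbids a nodal arc from coinciding with an edge), rules out isolated boundary components of $N$ at non-vertex boundary points: any smooth symmetric nodal arc through such a point must be transverse to $\partial T$ and hence produces an arc into $T$. At a $T$-vertex $v$ with $u(v)=0$, the expansion (\ref{neumannexpansion}) has $a_0 = 0$, so Lemma \ref{arcsnearvertex} likewise produces at least one nodal arc emanating into $T$. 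Combined with the exclusion of cycles from step one, this shows that every component of $N$ is a finite tree with all leaves on $\partial T$ and at least two leaves per component. A standard Jordan-curve count (a tree with $L_i$ boundary leaves subdivides the disk into $L_i$ regions, and disjoint trees are additive) then shows that the number of nodal domains equals $1 + L - c$, where $L = \sum L_i$ and $c$ is the number of tree components. Setting this equal to $2$ yields $L = c+1$. On the other hand, the tree handshake identity gives $L_i \ge 2 + 2 a_i$, where $a_i$ is the number of interior branch vertices (degree $\ge 4$) in the $i$-th tree; summing, $L \ge 2c + 2\sum a_i$. Combined with $L = c+1$, this forces $c = 1$ and $\sum a_i = 0$, so $N$ is a single simple arc with two distinct endpoints on $\partial T$.

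Finally, the upgraded Lemma \ref{neumannloop} guarantees that these two endpoints lie in the closures of distinct edges; since any two vertices of a triangle share an edge, the two endpoints cannot both be vertices of $T$, yielding the last assertion. The main technical obstacle is the ``no isolated boundary components'' step, which requires both the reflection-symmetric analysis at non-vertex boundary points and the local vertex expansions of Lemma \ref{arcsnearvertex}; once this structural fact is in hand, the Jordan-curve and handshake count is routine.
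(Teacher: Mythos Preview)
Your argument is correct and follows the same overall strategy as the paper's four-line proof—combine Courant's nodal domain theorem with Lemma~\ref{neumannloop} and the local nodal structure—though you flesh out the planar-graph count that the paper leaves implicit in the phrase ``by Courant's nodal domains theorem, there can be only one such arc.''  The one substantive difference is how you discharge the exceptional clause in Lemma~\ref{neumannloop}(2): the paper simply invokes the Hopf lemma to show that $u$ cannot vanish identically on any single edge, whereas you route through Lemma~\ref{mixedineq} to rule out vanishing on a \emph{pair} of edges.  That suffices for Lemma~\ref{neumannloop}, but later you also need that no nodal arc lies along a single edge; your parenthetical that Lemma~\ref{nodalintersectgeodesic} ``forbids a nodal arc from coinciding with an edge'' is not quite accurate, since that lemma is only a dichotomy.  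The gap is easily closed: if $u$ vanished on an edge then, together with the Neumann condition, the entire edge would lie in the nodal critical set of the reflected extension, contradicting the discreteness asserted in Lemma~\ref{nodalcritical} (equivalently, the paper's Hopf-lemma argument).  With that one-line fix your Euler/handshake count goes through and yields the same conclusion.
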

\begin{proof}
    Since $u$ is orthogonal to the constant functions, it must change signs in $T$, so $\Zcal(u)\cap T$ contains an arc. By the Hopf lemma \cite{hopf}, $u$ does not vanish identically on any edge of $T$. By Lemma \ref{neumannloop}, this arc cannot be a loop, and its endpoints cannot be located in the closure of the same edge of $T$. By Courant's nodal domains theorem, there can be only one such arc.
\end{proof}

\begin{coro}\label{cantbothvanish}
    If $u$ is a second Neumann eigenfunction on a geodesic triangle $T\subseteq \Mk$ with $\kappa<0$ and $v$ is a vertex of $T$, then in expansion (\ref{neumannexpansion}) at $v$ for $u$, the coefficients $a_0$ and $a_1$ cannot both vanish.
\end{coro}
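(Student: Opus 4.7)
Proof proposal. I argue by contradiction: suppose $a_0(u,v) = a_1(u,v) = 0$ at some vertex $v$ of $T$, whose interior angle $\beta$ lies in $(0,\pi)$ (as is automatic for a triangle in $\Mk$). Evaluating the expansion (\ref{neumannexpansion}) at $r = 0$ yields $u(v) = a_0 g_0(0) = 0$, so $v$ lies in the nodal set of $u$. The third case of Lemma \ref{arcsnearvertex} then provides a neighborhood $U$ of $v$ in which $\Zcal(u) \cap U$ is a union of at least two simple arcs meeting only at $v$; equivalently, $\Zcal(u)$ has at least two distinct tangent directions emanating from $v$ into $T$.

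On the other hand, Corollary \ref{nodalsimplearc} asserts that the global set $\Zcal(u) \cap \overline{T}$ is a single simple arc $\gamma$ with distinct endpoints $p_1, p_2 \in \partial T$ lying in interiors of two different edges of $T$. A simple arc admits only one ``branch'' at each of its endpoints and two at each interior point, so the two-branch lower bound at $v$ from Lemma \ref{arcsnearvertex} forces $v$ to be an interior point of $\gamma$ with exactly two tangent directions; the leading term of (\ref{neumannexpansion}) is then $a_2 r^{2\nu}\cos(2\nu\theta)$, with nodal rays at $v$ at angles $\beta/4$ and $3\beta/4$.

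The contradiction comes from Courant's nodal domain theorem. Since $v \in \partial T$ is an interior point of $\gamma$, the arc $\gamma$ meets $\partial T$ in the three distinct points $p_1, v, p_2$. Viewing $\partial T \cup \gamma$ as a connected planar graph embedded in the disk $\overline{T}$, a short Euler characteristic count (with $V = 5$ vertices $v, w_1, w_2, p_1, p_2$, and $E = 7$ edges -- the three edges of $T$ contribute five after subdividing at $p_1, p_2$, plus the two subarcs of $\gamma$ on either side of $v$) gives $F = 4$, i.e., three bounded faces inside $\overline{T}$. Equivalently, the three angular sub-sectors at $v$ carved out by the two tangent rays of $\gamma$ together with the two edges of $T$ at $v$ each lie in a distinct component of $T \setminus \gamma$. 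Each such component is a nodal domain of $u$, so $u$ has at least three nodal domains -- contradicting Courant's theorem, which bounds a second Neumann eigenfunction's nodal domains by two.

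The main obstacle is this last topological step, namely identifying three components of $T \setminus \gamma$: without the fact that $v$ (a vertex of $T$) sits on $\partial T$ and is an interior point of $\gamma$, a simple arc in a disk with endpoints on the boundary would only separate the disk into two pieces. Once the ``bouncing'' of $\gamma$ off $\partial T$ at $v$ is accounted for (either via the Euler count above or by direct inspection of the three sub-sectors), the rest of the argument -- expansion bookkeeping, Lemma \ref{arcsnearvertex}, Corollary \ref{nodalsimplearc}, and Courant -- is immediate.
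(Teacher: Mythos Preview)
Your argument is correct but considerably more elaborate than the paper's one-line proof, which simply observes that the last clause of Lemma~\ref{arcsnearvertex} contradicts Corollary~\ref{nodalsimplearc}. The contradiction the paper has in mind is more direct than your Euler/Courant count: once you know (as you correctly deduce) that $v$ must be an interior point of the simple nodal arc $\gamma$, apply Lemma~\ref{neumannloop} to each of the two sub-arcs $[p_1,v]$ and $[v,p_2]$. Since $v$ lies in the closure of both edges adjacent to it, neither $p_1$ nor $p_2$ can lie in the closure of an adjacent edge, forcing both endpoints into the edge opposite $v$ --- and now the endpoints of $\gamma$ lie in the closure of the same edge, which is exactly what Corollary~\ref{nodalsimplearc} forbids. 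Your route through an explicit face count and a second appeal to Courant reaches the same endpoint (three nodal domains), but the paper short-circuits this by using the ``endpoints not in the same closed edge'' clause already packaged into Corollary~\ref{nodalsimplearc}.

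Two minor points worth noting: your paraphrase of Corollary~\ref{nodalsimplearc} as placing $p_1,p_2$ in \emph{interiors} of edges is slightly stronger than what is stated there (the corollary only excludes both endpoints lying in the closure of the same edge), and your identification of the leading term as $a_2 r^{2\nu}\cos(2\nu\theta)$ presumes $a_2\neq 0$ without justification. Neither affects the validity of your argument --- a higher-order leading term would give three or more branches at $v$ and contradict simplicity of $\gamma$ immediately, and your Euler count goes through regardless of where $p_1,p_2$ sit on $\partial T$ --- but they are worth tightening.
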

\begin{proof}
    If both of these coefficients were to vanish, then the last statement in Lemma \ref{arcsnearvertex} would give a contradiction to Corollary \ref{nodalsimplearc}.
\end{proof}

\begin{coro}\label{atmosttwo}
    The second Neumann eigenspace of a negative constant-curvature geodesic triangle $T$ has dimension at most two.
\end{coro}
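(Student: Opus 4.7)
The plan is to reduce the dimension bound to a short piece of linear algebra using the vertex expansion of Section \ref{vertexsection} together with Corollary \ref{cantbothvanish}. Let $E$ denote the second Neumann eigenspace of $T$, and suppose for contradiction that $\dim E \geq 3$. I would fix any vertex $v$ of $T$ and examine the assignment
\[\Phi_v : E \to \Rbb^2, \qquad u \mapsto \bigl(a_0(u,v),\, a_1(u,v)\bigr),\]
where $a_0(u,v), a_1(u,v)$ are the first two coefficients in the expansion (\ref{neumannexpansion}) of $u$ at $v$.

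The main point is that $\Phi_v$ is a linear map. First I would note that this linearity follows directly from the construction of the expansion in Section \ref{vertexsection}: for fixed $r$ small, the $n$-th Fourier coefficient $u_n(r) = \frac{2}{\beta}\int_0^{\beta} u(r,\theta)\cos(n\nu\theta)\,d\theta$ depends linearly on $u$, and $a_n(u,v)$ is extracted as the leading Taylor coefficient of $u_n(r)/(r^{n\nu}g_{n\nu}(r^2))$, which is also a linear operation in $u$. Hence $\Phi_v$ is a linear map from a space of dimension at least $3$ to a space of dimension $2$, so its kernel is non-trivial.

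Picking any non-zero $u \in \ker \Phi_v$ yields a second Neumann eigenfunction with $a_0(u,v) = a_1(u,v) = 0$, which directly contradicts Corollary \ref{cantbothvanish}. This completes the argument.

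The only step requiring any care is the linearity of $\Phi_v$; everything else is formal. I do not anticipate this to be a serious obstacle since the expansion coefficients are given by integrals against fixed trigonometric functions followed by extraction of Taylor coefficients in $r$, both of which are linear operations on the underlying eigenfunction. Once this is observed, the corollary follows by a one-line pigeonhole / rank-nullity argument together with Corollary \ref{cantbothvanish}, and no further curvature-specific analysis is needed.
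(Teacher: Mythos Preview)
Your proposal is correct and essentially identical to the paper's proof: both fix a vertex $v$, consider the linear map $E \ni u \mapsto (a_0(u,v), a_1(u,v)) \in \Rbb^2$, and invoke Corollary \ref{cantbothvanish} to conclude this map is injective (you phrase injectivity as ``trivial kernel'' via contradiction, while the paper states it directly). No substantive difference.
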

\begin{proof}
    Call this eigenspace $E$. For a fixed vertex $v$ of $T$, the coefficient map \[E\ni u\mapsto (a_0(u,v),a_1(u,v))\in \Rbb^2\] (with the $a_i$ as in (\ref{neumannexpansion})) is linear and injective by Corollary \ref{cantbothvanish}.
\end{proof}

\begin{lem}\label{coeffnonzero}
    If $u$ is a first mixed eigenfunction of a geodesic polygon $P\subseteq\Mk$ (for any $\kappa\in \Rbb$), then at each mixed (resp. Dirichlet) vertex of $P$, the lowest coefficient $b_0$ (resp. $c_1$) of expansion (\ref{mixedexpansion}) (resp. (\ref{dirichletexpansion})) is non-zero.
\end{lem}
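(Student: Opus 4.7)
The plan is to combine the non-vanishing property of first mixed eigenfunctions with the nodal-arc consequences of the vanishing of the leading coefficients established in Lemmas \ref{mixedarcsatvertex} and \ref{dirarcsatvertex}. Recall from the introduction that the first mixed eigenvalue $\lambda_1$ is simple and that a corresponding eigenfunction $u$ does not vanish in the interior of $P$ (so we may and do assume $u > 0$ on $P$). This fact is the only ingredient specific to first mixed eigenfunctions that I would rely on.

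First I would handle the Dirichlet vertex case. Fix a Dirichlet vertex $v$ of $P$ and work in the local coordinates of Section \ref{vertexsection} so that $u$ admits the expansion (\ref{dirichletexpansion}) with coefficients $c_n = c_n(u,v)$. Assume toward contradiction that $c_1 = 0$. Then the second clause of Lemma \ref{dirarcsatvertex} produces an arc in $\Zcal(u) \cap P$ with endpoint at $v$. In particular, $u$ vanishes at interior points of $P$ arbitrarily close to $v$, contradicting $u > 0$ on $P$. Hence $c_1 \neq 0$.

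Next I would handle a mixed vertex $v$ in exactly the same way, now using the expansion (\ref{mixedexpansion}) with coefficients $b_n = b_n(u,v)$. If $b_0 = 0$, the second clause of Lemma \ref{mixedarcsatvertex} yields an arc of $\Zcal(u) \cap P$ emanating from $v$ into the interior of $P$, again contradicting the positivity of $u$. Thus $b_0 \neq 0$, finishing the proof.

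The argument is essentially a one-line reduction to the two expansion lemmas, so there is no real obstacle: the work was already done in Section \ref{vertexsection} (where we showed that vanishing of the lowest admissible Fourier coefficient forces an interior nodal arc at the vertex) and in the standard simplicity/positivity statement for $\lambda_1$ recalled in the introduction. The only thing to be careful about is that the relevant lemmas are applied to $u$ as an $H^2$-eigenfunction whose expansion at $v$ converges in a neighborhood of $v$ inside $P$; this is precisely the setting in which (\ref{mixedexpansion}) and (\ref{dirichletexpansion}) were derived, so no additional regularity check is needed.
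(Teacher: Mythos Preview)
Your proof is correct and follows essentially the same approach as the paper: invoke the positivity of first mixed eigenfunctions in $P$ and then contradict it via the interior nodal arc produced by Lemmas~\ref{mixedarcsatvertex} and~\ref{dirarcsatvertex} when the leading coefficient vanishes. The paper's version is just more terse.
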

\begin{proof}
    First mixed eigenfunctions do not vanish outside of their Dirichlet region. By Lemmas \ref{mixedarcsatvertex} and \ref{dirarcsatvertex}, if the lowest coefficient in the expansion were to vanish, then $u$ would vanish in $P$.
\end{proof}

The remaining results of this section determine the behavior of nodal sets of Killing fields applied to eigenfunctions near the vertices of a geodesic triangle. The computations are very similar to the ones performed to prove Lemma 2.2 of \cite{erratum}. We prove only the first of these three lemmas since the other two are similar to the first. 

\begin{lem}\label{derivativeneumannvertex}
    Let $u$ be a Laplace eigenfunction on a constant-curvature triangle $T\subseteq \Mk$ satisfying Neumann boundary conditions near an acute vertex $v$ of angle $\beta$. Suppose that $T$ is contained in the disk model of $\Mk$ with $v$ at the origin as in Section \ref{vertexsection}. Suppose that $X$ is a Killing field for $\Mk$ that does not vanish at $v$. Let $\alpha$ denote the angle between $X(v)$ and the positive $x$-axis. Then 
    \begin{enumerate}
        \item If $u(v)=0$ and $a_1(u,v)\neq 0$ (with $a_1$ as in Section \ref{vertexsection}), then $v$ is an endpoint of an arc in $\Zcal(Xu)\cap T$ if and only if the angle $\alpha\in[\beta-\pi/2,\pi/2]\cup[\beta+\pi/2,3\pi/2]$. If $v$ is an endpoint of such an arc, then it is an endpoint of exactly one such arc.
        \item If $u(v)\neq 0$, then $v$ is an endpoint of an arc of $\Zcal(Xu)$ if and only if $\alpha\in[\pi/2,\pi/2+\beta]\cup[3\pi/2,3\pi/2+\beta].$ If $v$ is an endpoint of such an arc, then it is an endpoint of exactly one such arc. 
    \end{enumerate}
\end{lem}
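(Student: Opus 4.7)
The plan is to compute the leading polar expansion of $Xu$ about $v$ in the Poincar\'e disk chart and to translate a simple transverse zero of its leading angular factor into a unique nodal arc, following the template of Lemma 2.2 in \cite{erratum}. Working in $\DDk$ with $v$ at the origin, the metric is conformal to $\geuc$, so $\alpha$ equals the Euclidean angle of $X(v)$; moreover, since $X$ is a real-analytic vector field that is non-vanishing at the origin, $X = X(v) + O(r)$ in Euclidean coordinates, and therefore
\[
    Xu = X(v)\cdot \nabla u + O\bigl(r\,|\nabla u|\bigr),
\]
with $\nabla$ the Euclidean gradient. The job reduces to expanding $X(v)\cdot \nabla u$ via (\ref{neumannexpansion}) and reading off the dominant term; acuteness of $\beta$ enters through $\nu = \pi/\beta > 2$, which is exactly what keeps both the higher-mode remainder in (\ref{neumannexpansion}) and the Killing correction $X - X(v)$ strictly subdominant to the claimed leading scale.

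In case (2), $a_0 \neq 0$ gives $u = a_0 + a_0 g_0'(0)\, r^2 + o(r^2)$, so $\nabla u = 2 a_0 g_0'(0)\,r\,(\cos\theta,\sin\theta) + o(r)$, whence
\[
    Xu = 2 a_0 g_0'(0)\,|X(v)|\,r\,\cos(\theta-\alpha) + o(r).
\]
Since $\beta < \pi/2$, $\cos(\theta-\alpha)$ has at most one zero on $[0,\beta]$; such a zero lies in $(0,\beta)$ precisely when $\alpha = \theta \mp \pi/2$ for some $\theta \in (0,\beta)$, which reduces modulo $2\pi$ to $\alpha \in [\pi/2, \pi/2+\beta] \cup [3\pi/2, 3\pi/2+\beta]$. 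In case (1), $a_0 = 0$ and $a_1 \neq 0$ give $u = a_1 r^{\nu}\cos(\nu\theta) + o(r^{\nu})$; a short Cartesian computation yields $\partial_x u = \nu a_1 r^{\nu-1}\cos((\nu-1)\theta) + o(r^{\nu-1})$, $\partial_y u = -\nu a_1 r^{\nu-1}\sin((\nu-1)\theta) + o(r^{\nu-1})$, and hence
\[
    Xu = \nu a_1\,|X(v)|\,r^{\nu-1}\cos\bigl((\nu-1)\theta + \alpha\bigr) + o(r^{\nu-1}).
\]
As $\theta$ runs over $[0,\beta]$, the argument sweeps an interval of length $(\nu-1)\beta = \pi - \beta < \pi$, so the leading cosine factor has at most one zero; a zero in $(0,\beta)$ occurs exactly when $\alpha \in (k\pi + \beta - \pi/2,\, k\pi + \pi/2)$ for some $k\in \Zbb$, i.e.\ $\alpha \in [\beta-\pi/2,\pi/2] \cup [\beta+\pi/2,3\pi/2]$ modulo $2\pi$.

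In either case, once the leading angular factor has a simple zero $\theta_0 \in (0,\beta)$, the real-analyticity of $Xu$ together with a Frobenius-style expansion in the spirit of Lemma \ref{arcsnearvertex} produces exactly one simple nodal arc of $Xu$ in $T$ emanating from $v$ tangent to the ray $\theta = \theta_0$; uniqueness is automatic from the fact that the leading factor has only the single zero $\theta_0$. The main obstacle is the bookkeeping of error terms, since both the higher-mode remainder in (\ref{neumannexpansion}) and the correction $X - X(v)$ contribute to $Xu$ at a priori comparable scales; the hypothesis $\beta < \pi/2$ (hence $\nu > 2$) is precisely what forces these errors to be $o(r)$ in case (2) and $o(r^{\nu-1})$ in case (1), and without this hypothesis the two competing scales $r$ and $r^{\nu-1}$ would need to be compared case by case.
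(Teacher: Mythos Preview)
Your proposal is correct and follows essentially the same approach as the paper: both compute the leading polar expansion of $Xu$ near $v$ by writing $X = X(v) + O(r)$ and applying it to the vertex expansion (\ref{neumannexpansion}), obtaining the same leading angular factors $\cos(\theta-\alpha)$ (case (2)) and $\cos((\nu-1)\theta+\alpha)$ (case (1)), then reading off the $\alpha$-ranges from the zeros of these factors on $[0,\beta]$. The only cosmetic difference is that the paper carries out the differentiation in polar coordinates while you compute $\partial_x u$, $\partial_y u$ in Cartesian form before dotting with $X(v)$; your discussion of why $\nu>2$ forces the remainder to be subdominant is slightly more explicit than the paper's but otherwise identical in content.
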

\begin{proof}
     We may, without loss of generality, rescale the vector field $X$ such that \[X(x,y)=\Big(\cos\alpha +O(\sqrt{x^2+y^2})\Big)\partial_x+\Big(\sin\alpha +O(\sqrt{x^2+y^2})\Big)\partial_y.\] In polar coordinates, the may be rewritten as \[X(r,\theta)=\Big(\cos(\alpha-\theta)+O(r)\Big)\partial_r+\Big(\frac1r\sin(\alpha-\theta)+O(1)\Big)\partial_{\theta}.\]
     Suppose that $v$ has interior angle $\beta<\pi/2$. Since $v$ is acute, $\nu=\pi/\beta>2$.\\
     \indent Suppose that $u(v)=0$ and $a_1(u,v)\neq 0$. Then, using the expansion (\ref{neumannexpansion}), \[u(r,\theta)=a_1r^{\nu}g_{\nu}(0)\cos(\nu\theta)+O(r^{\nu+2}).\] Hence, \[Xu(r,\theta)=\cos(\alpha-\theta+\nu\theta)\nu a_1r^{\nu-1}g_{\nu}(0)+O(r^{\nu}).\] Since $0\leq \theta\leq \beta$, $v$ is an endpoint of an an arc in $\Zcal(Xu)$ if and only if $\alpha\in[\beta-\pi/2,\pi/2]\cup[\beta+\pi/2,3\pi/2]$.\\
     \indent Now suppose that $u(v)\neq 0$, so $a_0(u,v)\neq 0$. Then since $\nu>2$, using expansion (\ref{neumannexpansion}), 
     \[u(r,\theta)=a_0g_0(0)+a_0g_0'(0)r^2+O(r^{\min\{4,\nu\}}),\] so \[Xu(r,\theta)=\cos(\alpha-\theta)\cdot 2a_0g_0'(0)r+O(r^{\min\{4,\nu\}-1}).\] Thus, $v$ is an endpoint of an arc in $\Zcal(Xu)$ if and only if $\alpha\in[\pi/2,\pi/2+\beta]\cup[3\pi/2,3\pi/2+\beta]$.
\end{proof}

\begin{lem}\label{derivativemixedvertex}
     Let $u$, $T$, $v$, $\beta$, $X$, and $\alpha$ be as in Lemma \ref{derivativeneumannvertex} except with $\beta\in(0,\pi)$ and with $u$ satisfying Dirichlet conditions on the positive $x$-axis and Neumann conditions on the other edge adjacent to $v$. Suppose $b_0(u,v)\neq 0$. Then 
     \begin{enumerate}
         \item If $v$ is acute, then $v$ is an endpoint of an arc in $\Zcal(Xu)$ if and only if $\alpha\in[\beta-\pi/2,0]\cup[\beta+\pi/2,\pi]$.
         \item If $v$ is non-acute, then $v$ is an endpoint of an arc in $\Zcal(Xu)$ if and only if $\alpha\in[0,\beta-\pi/2]\cup[\pi,\beta+\pi/2]$.
     \end{enumerate}
     In either case, if $v$ is an endpoint of of an arc in $\Zcal(Xu)$, then it is an endpoint of exactly one such arc. 
\end{lem}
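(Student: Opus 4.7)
The plan is to mirror the proof of Lemma \ref{derivativeneumannvertex}, with the mixed expansion (\ref{mixedexpansion}) in place of (\ref{neumannexpansion}). Because $b_0:=b_0(u,v)\neq 0$, the leading behavior of $u$ near $v$ is
\[u(r,\theta)=b_0\,g_{\nu/2}(0)\,r^{\nu/2}\sin(\nu\theta/2)+o(r^{\nu/2}),\]
with $\nu=\pi/\beta$. After rescaling $X$ exactly as in Lemma \ref{derivativeneumannvertex}, so that $X(v)=\cos\alpha\,\partial_x+\sin\alpha\,\partial_y$, the polar-coordinate expression for $X$ recorded there applies verbatim. Computing $\partial_r u$ and $\partial_\theta u$ from the expansion and applying the identity $\cos A\sin B+\sin A\cos B=\sin(A+B)$, one obtains the leading-order expression
\[Xu(r,\theta)=b_0\,g_{\nu/2}(0)\,\tfrac{\nu}{2}\,r^{\nu/2-1}\sin\!\bigl(\alpha+(\nu/2-1)\theta\bigr)+o(r^{\nu/2-1}).\]

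I would then read off the nodal directions at $v$ from the zeros of the leading angular factor $\sin(\alpha+(\nu/2-1)\theta)$ on $[0,\beta]$. As $\theta$ varies over $[0,\beta]$, the argument sweeps out an interval of length $|(\nu/2-1)\beta|=|\pi/2-\beta|<\pi/2$, so contains at most one multiple of $\pi$; this already gives the ``exactly one arc'' clause. In the acute case ($\beta<\pi/2$, $\nu/2-1>0$), the interval is $[\alpha,\alpha+\pi/2-\beta]$, and the condition $k\pi\in[\alpha,\alpha+\pi/2-\beta]$ rearranges to $\alpha\in[k\pi+\beta-\pi/2,\,k\pi]$; taking $k=0,1$ modulo $2\pi$ produces $[\beta-\pi/2,0]\cup[\beta+\pi/2,\pi]$, as in (1). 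In the obtuse case ($\beta>\pi/2$, $\nu/2-1<0$), the interval is $[\alpha+\pi/2-\beta,\,\alpha]$, and the analogous rearrangement yields $\alpha\in[0,\beta-\pi/2]\cup[\pi,\beta+\pi/2]$, as in (2).

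Finally, whenever the leading angular factor has a simple zero at some $\theta_*\in(0,\beta)$, dividing $Xu$ by $r^{\nu/2-1}$ produces a continuous (in fact real-analytic) function with a nondegenerate zero along the ray $\theta=\theta_*$, so the implicit function theorem provides a unique analytic arc in $\Zcal(Xu)$ with endpoint at $v$ tangent to that ray. Combined with the uniqueness of the admissible $k$, this yields the ``exactly one'' conclusion. I expect the main---though mild---technical nuisance to be the limit cases where $\theta_*\in\{0,\beta\}$, corresponding to the endpoints of the closed intervals in the statement; these would be handled by inspecting the next term of the expansion of $Xu$ along the relevant edge, exactly as in the analogous step for Lemma \ref{derivativeneumannvertex}.
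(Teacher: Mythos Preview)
Your proposal is correct and follows essentially the same approach as the paper, which explicitly omits the proof of this lemma, stating only that it is similar to that of Lemma~\ref{derivativeneumannvertex}. Your computation of the leading term $Xu(r,\theta)=b_0\,\tfrac{\nu}{2}\,r^{\nu/2-1}\sin(\alpha+(\nu/2-1)\theta)+o(r^{\nu/2-1})$ and the subsequent reading-off of the admissible $\alpha$-intervals is exactly the intended argument.
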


\begin{lem}\label{derivativedirichletvertex}
    Let $u$, $T$, $v$, $\beta$, $X$, and $\alpha$ be as in Lemma \ref{derivativeneumannvertex} except with $\beta\in(0,\pi)$ and with $u$ satisfying Dirichlet conditions near $v$. If $c_1(u,v)\neq 0$, then $v$ is an endpoint of an arc in $\Zcal(Xu)$ if and only if $\alpha\in [\beta-\pi,0]\cup [\beta,\pi]$. If $v$ is an endpoint of such an arc, then it is the endpoint of exactly one arc. 
\end{lem}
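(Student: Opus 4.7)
The approach is to parallel the proofs of Lemmas \ref{derivativeneumannvertex} and \ref{derivativemixedvertex}, with the only substantive change being the use of the Dirichlet expansion (\ref{dirichletexpansion}) in place of the Neumann or mixed expansion. After rescaling $X$ so that $X(0) = \cos\alpha\,\partial_x + \sin\alpha\,\partial_y$, the polar expression
\[
X(r,\theta) = \bigl(\cos(\alpha-\theta) + O(r)\bigr)\partial_r + \Bigl(\tfrac{1}{r}\sin(\alpha-\theta) + O(1)\Bigr)\partial_\theta
\]
from the proof of Lemma \ref{derivativeneumannvertex} carries over verbatim.

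Next I would substitute $c_1 = c_1(u,v) \neq 0$ into (\ref{dirichletexpansion}) to get
\[
u(r,\theta) = c_1 r^{\nu} g_\nu(0)\sin(\nu\theta) + O(r^{\nu+2}),
\]
apply $X$ term by term, and combine via the sine addition formula $\cos(A)\sin(B) + \sin(A)\cos(B) = \sin(A+B)$ to obtain the clean leading expression
\[
Xu(r,\theta) = c_1 \nu g_\nu(0) r^{\nu-1} \sin\bigl(\alpha + (\nu-1)\theta\bigr) + O(r^{\nu}).
\]

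The rest of the argument is then bookkeeping on the angular factor. As $\theta$ ranges over $[0,\beta]$, the argument $\alpha+(\nu-1)\theta$ sweeps out an interval of length $(\nu-1)\beta = \pi - \beta$. Since $\beta \in (0,\pi)$ this length is strictly less than $\pi$, so the interval $[\alpha, \alpha+\pi-\beta]$ contains at most one integer multiple of $\pi$. It contains exactly one precisely when $\alpha \in [k\pi + \beta - \pi,\, k\pi]$ for some $k \in \Z$; reducing modulo $2\pi$ gives the stated set $[\beta-\pi,0] \cup [\beta,\pi]$. At that unique $\theta^* \in [0,\beta]$ the zero of the leading sine factor is simple (its $\theta$-derivative is $(\nu-1)\cos(\alpha+(\nu-1)\theta^*) = \pm(\nu-1) \neq 0$), so by the same real-analytic normal-form argument used in the proofs of Lemmas \ref{derivativeneumannvertex} and \ref{derivativemixedvertex}, this simple angular zero extends uniquely to a single analytic arc of $\Zcal(Xu)$ with endpoint at $v$.

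The only mildly delicate point, and thus the main obstacle, is the four boundary values $\alpha \in \{\beta-\pi, 0, \beta, \pi\}$, at which $\theta^* \in \{0,\beta\}$ and the emerging arc of $\Zcal(Xu)$ is tangent to one of the Dirichlet edges adjacent to $v$. This is handled exactly as in the cited lemmas: one expands $Xu$ to one further order in $r$ to confirm that the leading-order tangency produces a genuine arc (rather than an isolated vanishing), so these boundary values are legitimately included in the closed intervals, and the arc is still unique. No new ideas beyond those already present in Section \ref{vertexsection} are required.
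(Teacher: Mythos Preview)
Your proposal is correct and follows exactly the approach the paper intends: the paper omits this proof entirely, stating only that it is ``similar to the first'' (Lemma~\ref{derivativeneumannvertex}), and your computation carries out precisely that analogy with the Dirichlet expansion~(\ref{dirichletexpansion}) in place of the Neumann one. One cosmetic remark: for $\beta\in(\pi/2,\pi)$ the remainder in $u$ is $O(r^{\min\{\nu+2,\,2\nu\}})=O(r^{2\nu})$ rather than $O(r^{\nu+2})$, but since $2\nu-1>\nu-1$ this does not affect your leading-order expression $Xu=c_1\nu g_\nu(0)r^{\nu-1}\sin(\alpha+(\nu-1)\theta)+O(r^{\nu})$ or the conclusion.
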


\section{Proof of Theorem \ref{finite}}\label{finitesection}
In this section we prove Theorem \ref{finite}.
\begin{lem}\label{nodircp}
    Let $u$ be a first mixed eigenfunction on a geodesic polygon $P\subseteq \Mk$ (for any $\kappa\in \Rbb$) satisfying Dirichlet boundary conditions on $D\subseteq \partial P$. Then $u$ has no (non-vertex) critical points in $D$. 
\end{lem}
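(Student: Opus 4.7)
The plan is to reduce the statement to the Hopf boundary point lemma. The first mixed eigenfunction $u$ corresponds to the smallest eigenvalue of a self-adjoint elliptic problem on a connected domain and, being principal, can be taken to be non-negative; the strong maximum principle (or the real-analyticity of $u$ together with unique continuation) then forces $u>0$ in the open set $P$. So for a non-vertex point $p\in D$, the value $u(p)=0$ is a strict interior minimum in the sense that $u(x)>u(p)$ for every $x\in P$ near $p$.

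Next, I would observe that $p$ lies in the interior of a single geodesic edge, so $\partial P$ is a real-analytic arc near $p$ and satisfies an interior ball condition at $p$ (a small geodesic ball in $\Mk$ contained in $P$ and tangent to the edge at $p$; equivalently, in any smooth chart where the metric is expressed as a positive $C^\infty$ multiple of $\geuc$ — for instance the Poincar\'e-type chart (\ref{poincaremetric}) — one gets a genuine Euclidean interior ball). In such a chart the eigenvalue equation $-\Delta_{\kappa}u=\lambda u$ becomes $-a^{ij}\partial_{ij}u+b^i\partial_i u=\lambda u$ with $(a^{ij})$ smooth and uniformly positive definite in a neighborhood of $p$, so Hopf's classical boundary point lemma applies to the operator $L=a^{ij}\partial_{ij}-b^i\partial_i$ acting on the nonpositive function $w:=-u$, which satisfies $Lw=-\lambda u\le 0$ and attains its maximum at $p$. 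The lemma delivers $\partial_\nu w(p)>0$, equivalently $\partial_\nu u(p)<0$, where $\nu$ denotes the outward unit normal to $\partial P$ at $p$.

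Finally, since $u\equiv 0$ along $D$, the tangential derivative of $u$ at $p$ vanishes, so $\nabla u(p)$ is a scalar multiple of $\nu$; combined with $\partial_\nu u(p)\ne 0$ this gives $\nabla u(p)\ne 0$, meaning $p$ is not a critical point of $u$.

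No serious obstacle is expected: the only thing to check beyond a textbook application of Hopf is that the curved setting does not spoil the hypotheses, but because $\Mk$ is real-analytic and the edges of $P$ are geodesics, both smoothness of the boundary and the interior ball condition at non-vertex boundary points are automatic, and in any conformal chart $-\Delta_\kappa$ is a uniformly elliptic operator with smooth coefficients to which the standard Hopf lemma directly applies.
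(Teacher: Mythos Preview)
Your argument is correct and is exactly the paper's approach: positivity of the principal mixed eigenfunction in $P$ together with Hopf's boundary point lemma, applied after writing $-\Delta_\kappa$ as a uniformly elliptic operator in a smooth chart, yields $\partial_\nu u\neq 0$ at every non-vertex Dirichlet point. (One small sign slip: with your conventions $Lw=\lambda u\ge 0$, not $-\lambda u\le 0$; this is in fact the inequality Hopf requires for a boundary maximum, so the conclusion $\partial_\nu w(p)>0$ is unaffected.)
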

\begin{proof}
    Recall that first mixed eigenfunctions do not vanish inside $P$. By viewing the Laplace operator $\Dk$ as an elliptic operator acting on $P\subseteq \Rbb^2$, we may apply Hopf's lemma \cite{hopf} to see that the normal derivative of $u$ at smooth points of $D$ is never equal to zero. Thus, these points are not critical points of $u$.
\end{proof}

\begin{lem}\label{noaccummixed}
    Let $u$ be a first mixed eigenfunction on a geodesic polygon $P\subseteq \Mk$ (for any $\kappa\in \Rbb$). Then no mixed or Dirichlet vertex of $P$ is an accumulation point of $\crit(u)$.
\end{lem}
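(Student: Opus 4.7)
The plan is to use the series expansions \eqref{mixedexpansion} and \eqref{dirichletexpansion} of $u$ near a mixed (resp. Dirichlet) vertex $v$ together with Lemma \ref{coeffnonzero}, which guarantees that the leading coefficient $b_0(u,v)$ (resp. $c_1(u,v)$) is non-zero. Since the Riemannian metric in the Poincar\'e disk model $\DDk$ is conformal to $\geuc$ with a smooth positive factor, $v$ is an accumulation point of $\crit(u)$ if and only if the Euclidean gradient of $u$ vanishes along a sequence approaching $v$, so it suffices to bound $|\nabla u|_{\geuc}$ below on a punctured neighborhood of $v$.

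Working in the sector $S_{\epsilon}$ of Section \ref{vertexsection} with $v$ at the origin and writing $\nu=\pi/\beta$, consider first a mixed vertex. By \eqref{mixedexpansion} and the normalization $g_{(n+1/2)\nu}(0)=1$, the leading term is $b_0 r^{\nu/2}\sin(\nu\theta/2)(1+O(r^2))$. Its Euclidean polar gradient is
\[\nabla u = b_0\tfrac{\nu}{2}r^{\nu/2-1}\bigl[\sin(\nu\theta/2)\hat r+\cos(\nu\theta/2)\hat\theta\bigr]+O(r^{\nu/2+1})+O(r^{3\nu/2-1}),\]
the last two error terms coming respectively from the $O(r^2)$ correction in $g_{\nu/2}$ and from the $n=1$ summand in \eqref{mixedexpansion}. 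Since $\sin^2+\cos^2=1$, the principal part has exact Euclidean norm $|b_0|(\nu/2)\,r^{\nu/2-1}$, and both error contributions are $o(r^{\nu/2-1})$ as $r\to 0^+$ because $\nu>0$. Hence there exists $r_0>0$ with $|\nabla u|_{\geuc}\geq \tfrac{1}{2}|b_0|(\nu/2)r^{\nu/2-1}>0$ for $0<r<r_0$, so $v$ is not an accumulation point of $\crit(u)$.

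For a Dirichlet vertex, the analogous calculation using \eqref{dirichletexpansion} shows that the leading term $c_1 r^{\nu}\sin(\nu\theta)(1+O(r^2))$ produces a principal gradient of exact norm $|c_1|\nu\,r^{\nu-1}$, with error terms $O(r^{\nu+1})+O(r^{2\nu-1})$ that are $o(r^{\nu-1})$; this yields $|\nabla u|_{\geuc}\geq \tfrac{1}{2}|c_1|\nu\, r^{\nu-1}>0$ in a punctured neighborhood.

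The only real content is the bookkeeping to see that each higher-order term in \eqref{mixedexpansion} and \eqref{dirichletexpansion} contributes a gradient of strictly higher order in $r$ than the principal term, which is immediate from termwise differentiation since the angular factor of the principal term has $\sin^2+\cos^2\equiv 1$ and therefore does not conspire to vanish with its derivative. I do not foresee a genuine obstacle here.
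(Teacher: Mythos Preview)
Your proposal is correct and follows essentially the same approach as the paper. The paper's proof is a one-line citation of Lemmas \ref{mixedarcsatvertex}, \ref{dirarcsatvertex}, and \ref{coeffnonzero}; you invoke Lemma \ref{coeffnonzero} and then explicitly carry out the gradient estimate from the expansions \eqref{mixedexpansion} and \eqref{dirichletexpansion}, which is precisely the content packaged into Lemmas \ref{mixedarcsatvertex} and \ref{dirarcsatvertex}.
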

\begin{proof}
    This is an immediate consequence of Lemmas \ref{mixedarcsatvertex}, \ref{dirarcsatvertex}, and \ref{coeffnonzero}.
\end{proof}

Before proving Theorem \ref{finite}, we reduce the problem to showing that no edge is contained in the critical set of a first mixed or second Neumann eigenfunction.

\begin{prop}\label{nocritarcs}
    Let $P\subseteq \Mk$ be a simply connected geodesic polygon for some $\kappa\in \Rbb$. Let $D$ be a connected union of edges of $P$. Let $u$ be either a second Neumann eigenfunction for $P$ or a first mixed eigenfunction for $P$ with Dirichlet conditions on $D$ and Neumann conditions on $\partial P\setminus D$. Then $\crit(u)$ cannot contain an arc intersecting (the interior of) $P$. 
\end{prop}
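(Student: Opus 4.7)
The plan is to argue by contradiction, mirroring the strategy of Judge-Mondal \cite{remarksoncritset}. Suppose $\alpha$ is a connected component of $\crit(u)$ that, by Proposition \ref{critset}, is a properly embedded real-analytic arc meeting the interior of $P$. Because $\nabla u \equiv 0$ on $\alpha$ and $u$ is real-analytic, $u$ is constant on $\alpha$, say $u \equiv c$. First one observes that $c \neq 0$: for a first mixed eigenfunction this is automatic since such $u$ does not vanish inside $P$, while for a second Neumann eigenfunction, $c=0$ would make every point of $\alpha$ a nodal critical point, contradicting the discreteness in Lemma \ref{nodalcritical}. The central tool is that for every Killing field $X$ on $\Mk$, the function $Xu$ is an eigenfunction of $\Dk$ with the same eigenvalue $\lambda$ as $u$, and since $\nabla u \equiv 0$ on $\alpha$, each such $Xu$ vanishes identically on $\alpha$.

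I would first handle the case that $\alpha$ is a loop, which by simple connectedness of $P$ bounds a topological disk $\Omega \subsetneq P$. At each $p \in \Omega$, the evaluation map from the three-dimensional space of Killing fields to $T_p\Mk$ is surjective, so $Xu \equiv 0$ on $\Omega$ for every Killing field $X$ would force $\nabla u \equiv 0$ on $\Omega$, and hence $u$ constant by unique continuation, which is impossible. Thus some $X_0$ makes $X_0 u|_\Omega$ a nonzero Dirichlet eigenfunction of $\Omega$ with eigenvalue $\lambda$, so $\lambda \geq \lambda_1^{\mathrm{Dir}}(\Omega) > \lambda_1^{\mathrm{Dir}}(P)$ by strict domain monotonicity. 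In the first mixed case this contradicts the elementary bound $\lambda \leq \lambda_1^{\mathrm{Dir}}(P)$, and in the second Neumann case it contradicts the strict Friedlander-type inequality $\mu_2 < \lambda_1^{\mathrm{Dir}}(P)$, which in negative curvature can be obtained by a test-function construction in the spirit of Lemma \ref{mixedineq}.

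For the non-loop case, $\alpha$ has two endpoints on $\partial P$. Lemma \ref{nodircp} rules out smooth Dirichlet endpoints. At a smooth Neumann endpoint $q$, reflecting $u$ across the adjacent edge produces an analytic eigenfunction on a larger domain, and Proposition \ref{critset} applied there forces $\alpha$ to meet the edge either tangentially or perpendicularly at $q$; the extended critical arc then continues into the reflected polygon. Iterating such reflections, the extended arc must either eventually close into a loop in the developed configuration (reducing to the loop argument just described) or terminate at a vertex of some reflected copy. Vertex endpoints are then excluded using the expansions of Section \ref{vertexsection} together with Lemmas \ref{arcsnearvertex}, \ref{noaccumatvertex}, \ref{mixedarcsatvertex}, \ref{dirarcsatvertex}, and \ref{coeffnonzero}, which collectively forbid $\crit(u)$ from accumulating at any vertex of $P$.

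The main obstacle is the non-loop case: one must guarantee that the reflection-and-development procedure terminates in a useful configuration, and one must handle the borderline vertex angles (notably $\pi/2$) where Lemma \ref{noaccumatvertex} alone is inconclusive. The loop case itself is essentially a one-line Dirichlet eigenvalue comparison once the Killing-field machinery is in place; the delicacy lies in carrying the comparison through reflections and in closing the argument at vertices of exceptional angle.
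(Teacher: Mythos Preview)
Your non-loop case is the genuine gap: the reflection-and-development scheme is only sketched, termination is not established, and the exceptional vertex angles remain open. The paper bypasses all of this with a much simpler observation that you already have the ingredients for but do not use.

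You showed $u \equiv c \neq 0$ on $\gamma$; in particular $\partial_{\nu} u \equiv 0$ along $\gamma$, so $u$ \emph{itself} satisfies Neumann conditions on $\gamma$. Endpoints of $\gamma$ cannot lie at vertices (Lemmas \ref{noaccumatvertex}, \ref{mixedarcsatvertex}, \ref{dirarcsatvertex}) or in $D$ (Lemma \ref{nodircp}), so if $\gamma$ is not a loop its endpoints lie in interiors of Neumann edges. Since $P$ is simply connected and $D$ is connected, $\gamma \cup (\partial P \setminus D)$ then contains a loop bounding a disk $\Omega \subsetneq P$. On $\Omega$, $u$ is a nonconstant Neumann eigenfunction with positive eigenvalue that does not change sign (trivially in the mixed case since $u$ has fixed sign on all of $P$; in the Neumann case $\gamma$ avoids $\Zcal(u)$ and one takes the component of $P \setminus \gamma$ on the appropriate side). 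This is impossible, since such an eigenfunction must be $L^2$-orthogonal to constants.

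No Killing fields, no Dirichlet eigenvalue comparison, no Friedlander-type inequality (which you only asserted in negative curvature, whereas the proposition is stated for all $\kappa \in \Rbb$), and no reflections. Your loop argument via $Xu$ is valid but circuitous, and it leaves you stranded precisely in the non-loop case, where the direct argument with $u$ closes immediately once you include the Neumann boundary pieces in the loop.
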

\begin{proof}
    Suppose that such an arc exists. Let $\gamma$ a connected component of this arc contained in (the interior of) $P$. By Lemmas \ref{noaccumatvertex}, \ref{mixedarcsatvertex}, and \ref{dirarcsatvertex}, $\gamma$ cannot have an endpoint at a vertex of $P$. If $u$ is a first mixed eigenfunction, then $\gamma$ also cannot have an endpoint in $D$ by Lemma \ref{nodircp}. If $u$ is a second Neumann eigenfunction, then $\gamma$ cannot intersect $\Zcal(u)$ or else $u$ would vanish on $\gamma$ and contradict Lemma \ref{nodalcritical}. Thus, in either case, $\gamma$ is either a loop, or $\gamma\cup (\partial P\setminus D)$ contains a loop since $P$ is simply connected and $D$ is connected. This loop bounds some topological disk $\Omega$. Then $u$ does not change signs in $\Omega$ and satisfies Neumann boundary conditions in $\Omega$. However, a non-constant Neumann eigenfunction must change signs in order to be orthogonal to constant functions, a contradiction.
\end{proof}

\begin{proof}[Proof of Theorem \ref{finite}]
    We first restrict to the negative curvature case. Because the metrics on $\Mk$ and $M_{\kappa'}$ differ by a scalar multiple for $\kappa,\kappa'<0$, it suffices to prove the theorem in the special case that $P\subseteq \Mm$. Suppose that $u$ is either a first mixed or second Neumann eigenfunction on $P$. If $u$ is a first mixed eigenfunction, suppose that the corresponding Dirichlet region is a connected union of edges.\\
    \indent Suppose toward a contradiction that $\crit(u)$ is infinite. By Lemma \ref{noaccummixed}, no mixed or Dirichlet vertex of $P$ is an accumulation point of $\crit(u)$. If $\crit(u)$ has an interior accumulation point, then the real-analyticity of $|\Gk u|_{\kappa}^2$ would imply that $\crit(u)$ contains an arc intersecting $P$, which would contradict Proposition \ref{nocritarcs}. By Lemma \ref{noaccumatvertex}, if a Neumann vertex is an accumulation point of $\crit(u)$, then this vertex has angle equal to either $\pi/2$ or $3\pi/2$, and near this vertex, the critical set is contained in exactly one of its adjacent edges. By analyticity, this edge is entirely contained in $\crit(u)$, so both vertices adjacent to this edge have interior angle in $\{\pi/2,3\pi/2\}$, and both edges adjacent to this edge are contained in the Neumann region.\\
    \indent By applying a hyperbolic isometry, we may suppose that the edge $e$ of $P$ that is contained in $\crit(u)$ is contained in the $y$-axis in the upper half-plane model of $\Mm$. Let $X=x\partial_x+y\partial_y$ denote the loxodromic Killing field that is tangent to the $y$-axis. Then $Xu$ vanishes identically on $e$, implying that $\partial_y (Xu)\equiv 0$ on $e$. Since $Xu$ satisfies Neumann boundary conditions on $e$, we also have $\partial_x(Xu)\equiv 0$ on $e$. Thus, $e$ is contained in the nodal critical set of $Xu$. Since $Xu$ satisfies the eigenfunction equation, this implies that $Xu\equiv 0$ on $P$ by Lemma \ref{nodalcritical}. Working in polar coordinates on the upper half-plane, this implies that $u(r,\theta)$ depends only on $\theta$.\footnote{Recall from Section \ref{killingfields} that the vector field $X$ is tangent to curves in $\Mm$ equidistant to the $y$-axis.} Viewed as a function of $\theta$, $u$ then is a solution to the ordinary differential equation \[\mu u=-\sin^2(\theta)u''\] where $\mu$ is the eigenvalue of $u$.\\
    \indent We claim that $P$ cannot have a Neumann edge that is not contained in either the $y$-axis or a geodesic represented by a semi-circle centered at the origin. Indeed, suppose to the contrary that $P$ has such an edge $e'$. Then the outward normal vector to $e'$ is linearly independent to $\partial_r$ everywhere on $e'$. Since $u$ satisfies Neumann conditions on $e'$ and $\partial_r u$ vanishes on $e'$, this implies that $u$ is a constant function, a contradiction.\\
    \indent Since $P$ cannot have such a Neumann edge, we have a contradiction in the case that $u$ is a second Neumann eigenfunction since no such polygon $P$ exists.\\
    \indent If $u$ is a first mixed eigenfunction, then its Dirichlet region is contained in a disjoint union of curves equidistant to the $y$-axis. These curves are not geodesics, so we have another contradiction.\\
    \indent We now prove the hemispherical case, which is not very different from the case of negative curvature. By the same argument as above, if $P$ is a simply connected spherical polygon and $u$ is a first mixed (with $D$ connected) or second Neumann eigenfunction with infinitely many critical points, then $P$ has an edge $e\subseteq \crit(u)$ whose adjacent vertices are Neumann vertices with interior angle equal to either $\pi/2$ or $3\pi/2$. If $L_e$ is a Killing field tangent to $e$, then $L_eu$ vanishes identically in $P$, so $u$ is a function only of the distance to $e$ and is therefore constant on latitudes equidistant to the geodesic containing $e$. Again, $P$ can have no edge that is not contained in the geodesic containing $e$ or a geodesic orthogonal to this geodesic. Thus, if $u$ is a first mixed eigenfunction, then $D$ is contained in a finite union of latitudes a positive distance to the geodesic containing $e$, so $D$ is not a finite union of geodesics, giving a contradiction. If $u$ is a second Neumann eigenfunction, then the only possibilities are that $P$ is an isosceles triangle with two right-angled vertices or that $P$ is the union of multiple such triangles whose closures intersect only in the geodesic containing $e$. In case of the latter, the intersection of $P$ with the geodesic containing $e$ is contained in $\crit(u)$, contradicting Proposition \ref{nocritarcs}.
\end{proof}

\section{Paths of triangles and Riemannian metrics: continuity of the spectrum and stability of critical points}\label{perttheory}

The remaining main results of the paper are proved by considering paths of triangles with accompanying paths of first mixed and second Neumann eigenfunctions that limit to a triangle for which we already know useful information. In this section, we define these paths and derive some properties of the behavior of critical points of eigenfunctions on domains in these paths.\\
\indent For $\kappa\in \Rbb\setminus \{0\}$, fix a geodesic triangle $T_{\kappa}\subseteq \Mk$. A convenient choice of coordinates for our path is given by the Klein model, whose metric is given by (\ref{kleinmetric}). Viewing $T_{\kappa}$ as being a subset $T\subseteq \Rbb^2$ endowed with this metric, if $\kappa<0$ (resp. $\kappa>0$), then for $\tau\in [\kappa,0]$ (resp. $\tau\in[0,\kappa]$), define geodesic triangles $T_{\tau}$ by endowing the Klein metric (\ref{kleinmetric}) with curvature $\tau$ onto the set $T\subseteq \Rbb^2$. We further assume that 
\begin{itemize}
    \item In the Neumann case, the largest vertex of $\Tk$ is located at the origin.
    \item In the mixed case with one Dirichlet edge, the Neumann vertex is located at the origin.
\end{itemize}
Since the Klein metric is conformal at the origin, this will ensure that the hypotheses in Theorems \ref{mainthm} and \ref{mixed} are satisfied for every triangle along the path. For the mixed case with two Dirichlet edges, we do not make use of a path of metrics, so for the remainder of this section, every reference to the mixed problem refers to the problem with one Dirichlet edge. To avoid notational burdens, we re-parameterize this path by $[0,1]\ni t\mapsto T_t$ such that $T_0$ has curvature equal to $0$ and $T_1=T_{\kappa}$. Say that triangle $T_t$ has curvature $\kappa(t)$.\\
\indent Fix either mixed or Neumann boundary conditions on $T$. By standard results in linear perturbation theory (see, e.g., \cite{kato}), there exist countably infinitely many real-analytic maps $\mu_j:[0,1]\to [0,\infty)$ such that $\mu_j(t)$ is an eigenvalue of $-\Delta_{\kappa(t)}$ on $T_{t}$ and such that every such eigenvalue is the value of one of these functions. Note however that these paths of eigenvalues may cross, so we may have $\mu_j(t)<\mu_k(t)$ with $k<j$ for some $t$, $j$, and $k$. There also exist correspondng real-analytic paths $u_j:[0,1]\to H^1(T)$ of Laplace eigenfunctions.\\
\indent If we are working with mixed boundary conditions, then the first mixed eigenvalue is always a simple eigenvalue, so we may relabel the $\mu_j$ and $u_j$ such that $\mu_1(t)$ is a first mixed eigenfunction for $T_t$ with first mixed eigenfunction $u_1(t)$ for all $t$.\\
\indent In the Neumann case, a complication in the construction of our paths of triangles is that, \textit{a priori}, these paths of eigenvalues may cross over each other (representing spectral multiplicities), complicating the construction of a continuous path of second Neumann eigenfunctions to accompany the path of triangles. The next two lemmas resolve this issue.
\begin{lem}\label{finitelymanysecond}
    Suppose that $T_1$ is not acute and that $\kappa<0$. Then for all but at most finitely many $t\in[0,1]$, the triangle $T_t$ has simple second Neumann eigenvalue. 
\end{lem}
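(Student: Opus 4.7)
The plan is to combine the analytic perturbation structure of the paths $\mu_j$ with Corollary \ref{atmosttwo}---which caps the second Neumann eigenspace of a negatively curved triangle at dimension $2$---and the fact (due to Atar--Burdzy and Siudeja, already cited after Corollary \ref{simple}) that non-equilateral Euclidean triangles have simple second Neumann eigenvalue. Let $\nu_2(t)$ denote the second Neumann eigenvalue of $T_t$; the goal is to show that the set $S=\{t\in[0,1]:\nu_2(t)\text{ has multiplicity at least }2\}$ is finite.

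First I would verify that $T_0$ has simple $\nu_2(0)$. Since $T_1$ is non-acute, its largest interior angle is at least $\pi/2$ and, by our choice of coordinates, sits at the origin; since the Klein model preserves angles at the origin, $T_0$ also has this angle, and is in particular non-equilateral, so $\nu_2(0)$ is simple by \cite{atarburdzy} and \cite{siudeja}. Next, using continuity of $\nu_2$ together with a uniform bound on the number of eigenvalues below any fixed constant, I would isolate a finite collection $\mathcal{R}=\{\mu_{j_1},\dots,\mu_{j_N}\}$ of the analytic paths that ever take positive values near the range of $\nu_2$. For any two paths in $\mathcal{R}$ that are not identically equal, real-analyticity forces their coincidence set to be finite, so the lemma reduces to ruling out the case of two identically equal paths in $\mathcal{R}$.

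The main obstacle is this last case. Suppose $\mu_j\equiv\mu_k$ for two distinct members of $\mathcal{R}$, and set $E=\{t\in[0,1]:\mu_j(t)=\nu_2(t)\}$. Since $\nu_2(t)$ is the minimum over the positive values in $\mathcal{R}$, $E$ is an intersection of sets of the form $\{t:\mu_j(t)\leq\mu_l(t)\}$, each of which is a finite union of closed intervals by real-analyticity of $\mu_l-\mu_j$. To conclude that $E$ is finite, I must exclude a non-degenerate interval $I\subseteq E$. Simplicity of $\nu_2(0)$ excludes $0$ from being in $I$, so $I$ has a left endpoint $a>0$; for $t$ slightly less than $a$, the value $\nu_2(t)$ must be realized by some third path $\mu_l\in\mathcal{R}\setminus\{\mu_j,\mu_k\}$ with $\mu_l(t)<\mu_j(t)$, and continuity then yields $\mu_l(a)=\mu_j(a)=\mu_k(a)=\nu_2(a)$. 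Thus the second Neumann eigenspace of $T_a$ has dimension at least $3$, contradicting Corollary \ref{atmosttwo} since $\kappa(a)<0$. Hence $E$ is finite for every identically-equal pair in $\mathcal{R}$, and $S$ is a finite union of finite sets.
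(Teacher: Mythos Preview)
Your argument is correct and follows essentially the same route as the paper's: both use Atar--Burdzy simplicity at $t=0$, reduce via analyticity to the case of an identically coinciding pair of eigenvalue branches, and then locate a point $a\in(0,1]$ where a third branch meets this pair at $\nu_2(a)$, yielding multiplicity at least $3$ in contradiction with Corollary~\ref{atmosttwo}. Your version is simply more explicit about isolating the finite family $\mathcal{R}$ and about why the set $E$ must contain a nondegenerate interval with left endpoint $a>0$; the paper compresses these steps into the single sentence that some branch $\mu_j$ has multiplicity at least $2$ for all $t$ and hence must cross another branch while equaling $\nu_2$.
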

\begin{proof}
    Since $T_1$ is not acute, $T_0$ is also not acute. By the simplicity result of Atar and Burdzy \cite{atarburdzy}, the triangle $T_0$ has simple second Neumann eigenvalue. By the real-analyticity of the eigenvalue branches, if there were infinitely many $t$ such that $T_t$ has a multiple second Neumann eigenvalue, then there would exist $j$ such that the eigenvalue $\mu_j(t)$ has multiplicity at least $2$ for all $t\in [0,1]$. Since the second Neumann eigenvalue of $T_0$ is not equal to $\mu_j(0)$, there must exist $t_0$ in $(0,1)$ such that $\mu_j(t_0)=\mu_i(t_0)$ for some $i\neq j$ and such that $\mu_j(t_0)$ is the second Neumann eigenvalue of $T_{t_0}$. Thus $T_{t_0}$ has a second Neumann eigenvalue of multiplicity equal to at least $3$, contradicting Corollary \ref{atmosttwo}.
\end{proof}
That there are only finitely many points on the path such that there is a spectral multiplicity, we may find a continuous path of second Neumann eigenfunctions. This is similar to the result of Section 12 of \cite{judgemondal}.
\begin{coro}\label{ctspath}
    There exists a reparameterization $[0,1]\ni s\mapsto T_s$ of $t\mapsto T_t$ and a function $[0,1]\ni s\mapsto \kappa(s)$ such that $T_s\subseteq M_{\kappa(s)}$ as well as a continuous path \[[0,1]\ni s\mapsto u_s\in H^1(T)\] of second Neumann eigenfunctions for $T_s$. 
\end{coro}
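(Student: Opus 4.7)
The plan is to combine the real-analytic perturbation branches $\mu_j(t), u_j(t)$ from the preamble with Lemma \ref{finitelymanysecond} and Corollary \ref{atmosttwo}, and then handle the finitely many ``bad'' times by inserting constant-$\kappa$ pauses into the parameterization during which the eigenfunction is rotated within a $2$-dimensional eigenspace. Concretely, let $E(t)$ denote the second Neumann eigenvalue of $T_t$. Standard min-max arguments show $E$ is continuous on $[0,1]$, and Lemma \ref{finitelymanysecond} produces finitely many times $0<t_1<\cdots<t_N<1$ where $E(t)$ is not simple; by Corollary \ref{atmosttwo}, at each $t_i$ the eigenvalue $E(t_i)$ has multiplicity exactly two.

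On each open interval $(t_{i-1},t_i)$ (with the convention $t_0=0$, $t_{N+1}=1$), the real-analyticity of the branches $\mu_j$ and the simplicity of $E$ on that interval imply that there is a unique index $j(i)$ with $\mu_{j(i)}(t)=E(t)$ throughout the interval. By the real-analyticity of $u_{j(i)}$ and after fixing an $L^2$ normalization together with a sign convention, $t\mapsto u_{j(i)}(t)$ is a continuous (in fact analytic) $H^1$-valued path of second Neumann eigenfunctions on $[t_{i-1},t_i]$, extending by analyticity to the endpoints. In particular, the two limiting functions $u_{j(i)}(t_i)$ and $u_{j(i+1)}(t_i)$ are both second Neumann eigenfunctions of $T_{t_i}$ with the same eigenvalue $E(t_i)$.

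To splice these analytic segments together, I reparameterize by inflating each $t_i$ into a short closed interval $[s_i^-,s_i^+]$ of pause, keeping $\kappa(s)$ constant equal to $\kappa(t_i)$ (and hence $T_s=T_{t_i}$) on that interval, and setting $\kappa$ to follow the original $t\mapsto \kappa(t)$ on the complementary intervals. On $[s_i^-,s_i^+]$ I define $u_s$ by a continuous rotation inside the $2$-dimensional second Neumann eigenspace $V_i$ of $T_{t_i}$ connecting $u_{j(i)}(t_i)$ to $u_{j(i+1)}(t_i)$: since both are unit-norm eigenfunctions in $V_i$, writing them in an orthonormal basis of $V_i$ as $(\cos\alpha_i,\sin\alpha_i)$ and $(\cos\beta_i,\sin\beta_i)$, I linearly interpolate the angle from $\alpha_i$ to $\beta_i$ over $[s_i^-,s_i^+]$. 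This produces a continuous $H^1$-valued path of unit-norm second Neumann eigenfunctions at the fixed triangle $T_{t_i}$, matching the analytic pieces at both endpoints of the pause.

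The main obstacle is ensuring that the two limiting eigenfunctions $u_{j(i)}(t_i)$ and $u_{j(i+1)}(t_i)$ really do live in (and generically span) the correct $2$-dimensional eigenspace, and that sign and normalization choices can be propagated consistently across all $N$ crossings; both follow from the analyticity of the perturbation branches and Corollary \ref{atmosttwo}, after possibly flipping signs on finitely many segments so that adjacent pieces glue through the interpolation without discontinuity. After concatenating the analytic segments with the rotational interpolations and rescaling $s$ to land in $[0,1]$, the resulting map $s\mapsto(\kappa(s),u_s)$ has the properties claimed.
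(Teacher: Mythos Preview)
Your proposal is correct and follows essentially the same approach as the paper: use Lemma~\ref{finitelymanysecond} to reduce to finitely many multiplicity points, run the analytic perturbation branches on the simplicity intervals, and at each bad point connect the two limiting eigenfunctions by a path inside the (at most two-dimensional, by Corollary~\ref{atmosttwo}) second Neumann eigenspace, then concatenate. Your version is in fact more explicit than the paper's about the reparameterization (inflating each $t_i$ to a pause interval) and about the connecting path (angular interpolation in an orthonormal basis of $V_i$), whereas the paper simply asserts that such a connecting path exists within the eigenspace.
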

\begin{proof}
    By Lemma \ref{finitelymanysecond}, the set of $t$ for which $T_t$ has a simple second Neumann eigenvalue is a finite union of open intervals, and on the closure of each of these intervals, standard perturbation theory gives the continuous paths we desire.\\
    \indent Suppose that for $t_0\in (0,1]$, triangle $T_{t_0}$ has a multiple second Neumann eigenvalue. By the previous paragraph, $T_{t_0}$ is the endpoint of two distinct open intervals on which our path of eigenfunctions is already defined. Within the second Neumann eigenspace, there exists a continuous path of (non-zero) eigenfunctions connecting these two eigenfunctions. Concatenating all of these paths together gives the desired path. 
\end{proof}

As allowed by Corollary \ref{ctspath}, for the remainder of the paper, we suppose without loss of generality that the path of triangles $T_t$ has an accompanying path $u_t$ of second Neumann (or first mixed) eigenfunctions. For the remainder of this section, we study the behavior of these paths of eigenfunctions. 

\begin{defn}
    Suppose that eigenfunction $u_t$ has a non-vertex critical point $p\in \overline{T}_t$. Let $T\subseteq \Rbb^2$ be the set in $\Rbb^2$ representing $T_t$. We will say that $p$ is \textit{stable under perturbation} if for any neighborhood $U$ of $p$ in $\overline{T}$, there exists $\epsilon>0$ such that if $|t-s|<\epsilon$, then $u_t$ has a critical point in $U$. 
\end{defn}

The first result on the stability of critical points is standard (see Lemma 10.1 \cite{judgemondal}), and we omit the proof. Recall that a critical point $p$ of a smooth function $u$ is called \textit{nondegenerate} if the Hessian of $u$ has non-vanishing determinant at $p$. Otherwise, we call $p$ a \textit{degenerate} critical point.
\begin{lem}\label{nondegenstable}
    Nondegenerate critical points in $T_t$ are stable under perturbation. 
\end{lem}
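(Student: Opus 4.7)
The plan is to apply the implicit function theorem to the gradient of the one-parameter family of eigenfunctions, after reducing to a fixed planar domain. Since each $T_s$ is represented by the same subset $T \subseteq \Rbb^2$ and the Riemannian gradient $\Gk u_s$ equals the inverse metric tensor applied to the Euclidean differential $d u_s$, a point of $T$ is a critical point of $u_s$ if and only if $d u_s$ vanishes there, independent of $s$. Moreover, by the reflection construction recalled in Section \ref{nodalsection}, each $u_s$ extends real-analytically across the interior of every edge of $T_s$ (evenly across Neumann edges, oddly across Dirichlet edges), so the non-vertex critical point $p$ lies in the interior of an open planar set $U$ on which the extensions of all nearby $u_s$ are jointly smooth.

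On any open subinterval of $[0,1]$ on which the perturbation theory supplies a real-analytic family $s \mapsto u_s \in H^1$, interior elliptic regularity, applied to the analytic family of metrics $\{g_{\kappa(s)}\}$, promotes this to a real-analytic, hence $C^2_{\mathrm{loc}}$-continuous, family in $s$ on $U$. I would then define $F(q,s) = d u_s(q) \in \Rbb^2$ on $U \times I$ for a small interval $I$ around $t$. By hypothesis, $F(p,t) = 0$, and $\partial_q F(p,t)$ equals the Euclidean Hessian of $u_t$ at $p$, which is invertible precisely because $p$ is nondegenerate. The implicit function theorem then yields a $C^1$ arc $s \mapsto p(s)$ of zeros of $F$ through $(p,t)$, producing critical points of $u_s$ inside any prescribed neighborhood of $p$ for all $s$ close to $t$.

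The remaining issue is the finitely many crossing values of $s$ furnished by Lemma \ref{finitelymanysecond}, at which the path is only continuous, so the implicit function theorem does not apply directly. At such a value, I would replace it with a Brouwer-degree argument: nondegeneracy isolates $p$ as a zero of $d u_t$, so there exists a small closed disk $\overline{B} \subseteq U$ containing $p$ with $d u_t$ nowhere zero on $\partial B$, and the degree of $d u_t \colon \partial B \to \Rbb^2 \setminus \{0\}$ is $\pm 1$. Continuity of $s \mapsto u_s$ in $C^1_{\mathrm{loc}}$ across the crossing, a standard consequence of $H^1$-continuity combined with interior elliptic regularity, ensures that for $s$ near $t$ the restriction $d u_s \vert_{\partial B}$ is nonvanishing and homotopic through nonvanishing maps to $d u_t \vert_{\partial B}$. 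Degree invariance then forces a zero of $d u_s$ inside $B$, yielding the desired critical point. The only genuine obstacle in the argument is verifying the $C^1_{\mathrm{loc}}$ continuity of the eigenfunction path at these crossing values; everything else is the textbook implicit-function-theorem picture.
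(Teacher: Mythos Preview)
Your proof is correct and follows the standard implicit-function-theorem approach; the paper in fact omits the proof entirely, citing it as standard (Lemma 10.1 of \cite{judgemondal}). Two minor remarks: since the lemma concerns critical points in the open triangle $T_t$, the reflection extension across edges is unnecessary here (though harmless); and your Brouwer-degree treatment of the crossing values is more careful than anything the paper indicates---it is a genuine subtlety that the concatenated path of Corollary~\ref{ctspath} is only continuous at those junctions, and your argument handles it cleanly.
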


\begin{lem}\label{stablecp}
    Let $X_t$ denote an elliptic or loxodromic Killing field on $T_t$. If $u_t$ has a non-vertex critical point $p\in \partial T_t$ such that $p$ is an endpoint of exactly one arc in $\Zcal(X_tu_t)\cap T_t$ and such that the edge containing $p$ is not contained in $\Zcal(X_tu_t)$, then $p$ is stable under perturbation.
\end{lem}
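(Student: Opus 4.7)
The plan is to reduce stability to a sign-change argument for the tangential derivative of $u_t$ along the edge containing $p$. Since Lemma \ref{nodircp} implies first mixed eigenfunctions have no critical points on Dirichlet edges, the edge $e\subset \partial T_t$ containing $p$ must be a Neumann edge. Reflecting $u_t$ across $e$ produces a real-analytic eigenfunction $\overline{u}_t$ on a neighborhood $V$ of $p$ in $\Mk$; since $X_t$ is globally defined on $\Mk$, the function $X_t\overline{u}_t$ is also real-analytic on $V$, and its nodal set near $p$ is governed by Lemma \ref{nodalcritical}.

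The key structural step is to show that the hypothesis forces $X_tu_t|_e$ to vanish to odd order at $p$. If $\nabla(X_t\overline{u}_t)(p) = 0$, Lemma \ref{nodalcritical} would produce $n\geq 2$ analytic arcs through $p$ meeting at equal angles; since none of these arcs can lie along $e$ by the assumption $e\not\subset \Zcal(X_tu_t)$, at least two of the $2n$ emanating rays enter $T_t$, contradicting the ``exactly one arc'' hypothesis. Hence $\nabla(X_t\overline{u}_t)(p)\neq 0$ and the nodal set near $p$ is a single analytic arc $\alpha$. A Taylor expansion of $X_tu_t$ at $p$---writing $\alpha$ as $x = g(y)$ when its tangent is transverse to $e$, and as $y = f(x)$ when it is tangent to $e$---shows that exactly one arc enters $T_t$ precisely when $\alpha$ has odd leading order in the tangential coordinate, which is equivalent to the restriction $X_tu_t|_e$ vanishing at $p$ to odd order.

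Along the Neumann edge $e$, the relation $\partial_\nu u_t\equiv 0$ gives the factorization $X_tu_t = (X_t)_\tau \cdot \partial_\tau u_t$ along $e$, where $\tau$ denotes arclength. Tracking the orders of vanishing of each factor at $p$---using the Killing equation $\nabla_iX_j + \nabla_jX_i = 0$ together with the form of the metric in Fermi coordinates along $e$ on the constant-curvature surface $\Mk$ to pin down the order of $(X_t)_\tau$---converts the odd-order vanishing of the product into an odd-order zero of $\partial_\tau u_t$ at $p$, so $\partial_\tau u_t$ changes sign on $e$ across $p$. By the continuity of the real-analytic family $u_s$ on compact subsets of the open edge (cf.\ Corollary \ref{ctspath}), the sign change persists for $\partial_\tau u_s$ near $p$ for all $s$ sufficiently close to $t$, and the intermediate value theorem produces a zero $p_s$ of $\partial_\tau u_s$ on $e$ in any prescribed neighborhood of $p$. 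Since $p_s$ lies on a Neumann edge, it is automatically a critical point of $u_s$, proving stability.

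The main obstacle I anticipate is the parity-tracking in the third paragraph when $X_t$ is normal to $e$ at $p$ (so that $(X_t)_\tau(p)=0$): one has to verify, using the constant-curvature structure of $\Mk$, that the order of $(X_t)_\tau$ at $p$ has the correct parity, so that the odd-order vanishing of $X_tu_t|_e$ is actually borne by the factor $\partial_\tau u_t$ rather than absorbed by the Killing factor. This is a finite and geometric check, but it is the place where $\kappa\neq 0$ enters the argument in a nontrivial way.
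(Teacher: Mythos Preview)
Your approach is correct and takes a genuinely different route from the paper's. The paper argues directly with the nodal arc: it chooses a continuous family $s\mapsto X_s$ of Killing fields near $X_t$, reflects $u_t$ across $e$, notes that near $p$ the set $\Zcal(X_t\overline{u}_t)$ is a single smooth arc crossing $e$, and then uses continuity to conclude that for $s$ close to $t$ the set $\Zcal(X_s\overline{u}_s)$ is likewise a smooth arc crossing $e$ near $p$; since $X_s$ is not normal to $e$, the crossing point is a critical point of $u_s$. Your route---extracting an odd-order zero of $X_tu_t|_e$ from the ``exactly one arc'' hypothesis, factoring $X_tu_t|_e=(X_t)_\tau\cdot\partial_\tau u_t$, and then invoking the intermediate value theorem for $\partial_\tau u_s$---is equally valid and has the small advantage that it never needs to vary the Killing field with $s$.

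Your flagged obstacle, however, is a non-issue and you should simply remove it. A Killing field is either everywhere or nowhere orthogonal to a given geodesic (this is precisely the fact invoked in the proof of Lemma~\ref{nowhereortho}). Hence if $(X_t)_\tau(p)=0$, then $(X_t)_\tau\equiv 0$ on all of $e$, so $X_tu_t=(X_t)_\tau\cdot\partial_\tau u_t\equiv 0$ on $e$, contradicting the hypothesis $e\not\subset\Zcal(X_tu_t)$. Thus $(X_t)_\tau(p)\neq 0$, the Killing factor contributes nothing to the order of vanishing, and the odd order of $X_tu_t|_e$ at $p$ passes directly to $\partial_\tau u_t$. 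No parity-tracking through the Killing equation or Fermi coordinates is needed, and the constant-curvature structure plays no special role at this step.
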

\begin{proof}
    By the constructions in Section \ref{killingfields}, there exists a real-analytic family of Killing fields $(t-\epsilon,t+\epsilon)\ni s\mapsto X_s$ that limit to $X_t$ as $s\to t$. Extending $u_t$ to a neighborhood of $p$ via reflection over the edge containing $p$, Lemma \ref{nodalcritical} shows that, near $p$, the set $\Zcal(X_t u_t)$ is a simple arc that intersects both $T$ and the complement of the closure of $T$. By the continuity in $H^1$ of $s\mapsto u_s$ and by extending the eigenfunction via reflection to a neighborhood of $p$ in $\Rbb^2$, we see that there is a neighborhood $U$ of $p$ such that for $s$ near $t$, the zero level set $\Zcal(X_su_s)\cap U$ is also a simple arc intersecting $\partial T_s$ transversely. Since the edge containing $p$ is not contained in $\Zcal(X_tu_t)$, for $s$ sufficiently close to $t$, $X_s$ is linearly independent to the normal vector to the edge of $T_s$ that $U$ intersects. Thus, this intersection is a critical point of $u_s$. 
\end{proof}

The remaining results of this section use the vertex expansions (\ref{neumannexpansion}) and (\ref{mixedexpansion}) to understand the behavior of critical points near vertices as we vary the eigenfunction $u_t$. By the continuity of $t\mapsto u_t$ in $H^1(T)$, for a fixed vertex $v$ of $T$, the coefficients $a_j(u_t,v)$ and $c_j(u_t,v)$ in (\ref{neumannexpansion}) and (\ref{mixedexpansion}), respectively, vary continuously with $t$ as well. Lemmas \ref{convergethenzero} and \ref{convtosamevertex} are proved nearly identically to Lemma 9.3 and Lemma 9.2, respectively, of \cite{judgemondal}, so we omit the proofs. Lemma \ref{sequencetomixed} is new and proved similarly to the other two results, but we prove it for completeness. 

\begin{lem}\label{convergethenzero}
   Let $v$ be a Neumann vertex of $T$. Suppose that there exists a sequence $t_n\in [0,1]$ converging to some $t_0$ such that each $u_{t_n}$ has a non-vertex critical point $p_n$ such that $p_n\to v$ in $T$ as $n\to\infty$. If $T_{t_0}$ has an acute angle at $v$, then the eigenfunction $u_{t_0}$ vanishes at $v$. If $T_{t_0}$ has an obtuse angle at $v$, then the coefficient $a_1(u_{t_0},v)$ in (\ref{neumannexpansion}) vanishes. 
\end{lem}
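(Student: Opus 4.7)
The plan is to argue by contradiction. Assume that $a_0(u_{t_0},v)\neq 0$ in the acute case, or $a_1(u_{t_0},v)\neq 0$ in the obtuse case. I will produce a fixed neighborhood $U$ of $v$ and an $\epsilon>0$ such that $u_t$ has no critical points in $U\cap \overline{T}_t$ whenever $|t-t_0|<\epsilon$, which contradicts the hypothesis that $p_n\in\crit(u_{t_n})$ with $p_n\to v$ and $t_n\to t_0$.

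The first step is to establish continuous $t$-dependence of the coefficients $a_0(u_t,v)$ and $a_1(u_t,v)$. The continuity of $t\mapsto u_t$ in $H^1(T)$ combined with elliptic regularity (after reflecting each $u_t$ across the Neumann edges adjacent to $v$ so as to extend it analytically past the interiors of those edges) upgrades to $C^k$ convergence on compact subsets of a neighborhood of $v$ that exclude $v$ itself. Integrating $u_t$ along a small circle of radius $r_0$ about $v$ against $\cos(n\nu(t)\theta)$ and dividing by the normalizing factor $r_0^{n\nu(t)}g_{n\nu(t)}(r_0^2)$ recovers $a_n(u_t,v)$. Since the angle $\beta(t)$ at $v$, and hence $\nu(t)=\pi/\beta(t)$, varies continuously in $t$, and since the Frobenius functions $g_{n\nu}$ depend continuously on $\kappa(t)$ via standard ODE theory, we obtain $a_n(u_t,v)\to a_n(u_{t_0},v)$ for $n=0,1$. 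In particular, the relevant coefficient is bounded away from zero for $t$ in a small interval around $t_0$.

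In geodesic polar coordinates centered at $v$ in the Poincar\'e disk model of $M_{\kappa(t)}$, differentiating the expansion (\ref{neumannexpansion}) term by term gives
\[
\partial_r u_t(r,\theta)=2a_0(u_t,v)g_0'(0)\,r+a_1(u_t,v)\nu r^{\nu-1}g_\nu(0)\cos(\nu\theta)+R_1(r,\theta;t),
\]
\[
\frac{1}{r}\partial_\theta u_t(r,\theta)=-a_1(u_t,v)\nu r^{\nu-1}g_\nu(0)\sin(\nu\theta)+R_2(r,\theta;t),
\]
where $R_1=o(r)+o(r^{\nu-1})$ and $R_2=o(r^{\nu-1})$, uniformly in $\theta$ and in $t$ near $t_0$. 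In the acute case $\nu(t)>2$ for $t$ near $t_0$, so the power $r$ is dominant; since $a_0(u_{t_0},v)\neq 0$ and $g_0'(0)<0$ by normalization, we get $|\partial_r u_t|\geq c\,r$ on a fixed punctured disk, precluding critical points. In the obtuse case $1<\nu(t)<2$, so $r^{\nu-1}$ is dominant, and the leading vector
\[
a_1(u_t,v)\nu r^{\nu-1}g_\nu(0)\bigl(\cos(\nu\theta),-\sin(\nu\theta)\bigr)
\]
has Euclidean norm equal to $|a_1(u_t,v)|\nu r^{\nu-1}|g_\nu(0)|$; when $|a_1(u_{t_0},v)|$ is bounded away from $0$, this gives $|\nabla u_t|\geq c\,r^{\nu-1}$ on a uniform punctured neighborhood of $v$, again ruling out critical points. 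Either conclusion contradicts the convergence $p_n\to v$.

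The main obstacle is uniformity of the remainders $R_i$ in $t$: while the vertex expansion for each fixed $t$ is classical, the Frobenius series depends on the varying parameter $\kappa(t)$. This is handled by noting that the indicial equation and the recursion defining the Taylor coefficients of $g_{n\nu}$ depend polynomially on $\kappa$, so on any fixed compact range of $t$ near $t_0$ the series converges uniformly on a fixed geodesic disk about $v$, and the $o$-estimates above inherit this uniformity. Once this ODE input is established, the remainder of the argument is a direct inspection of the leading-order terms, mirroring the proof of Lemma~9.3 of \cite{judgemondal}.
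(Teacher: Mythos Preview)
Your proof is correct and follows essentially the same approach as the paper, which omits the argument and refers to Lemma~9.3 of \cite{judgemondal}. Your contrapositive framing---assuming the relevant coefficient is nonzero and bounding $|\nabla u_t|$ from below on a fixed punctured neighborhood of $v$, uniformly in $t$---is logically equivalent to the direct approach (exemplified in the paper's proof of Lemma~\ref{sequencetomixed}) of evaluating $\partial_r u_{t_n}$ at $p_n$, dividing by the leading power of $r_n$, and passing to the limit to force the coefficient to zero; your explicit attention to the $t$-uniformity of the Frobenius remainders is the one point the paper leaves implicit.
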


\begin{lem}\label{convtosamevertex}
    Suppose that $u_t$ is a second Neumann eigenfunction for each $t$. Suppose that there exists a sequence $t_n\in [0,1]$ converging to some $t_0$ such that each $u_{t_n}$ has two distinct non-vertex critical points $p_n$, each contained in distinct edges of $T_{t_n}$. Then $p_n$ and $q_n$ do not both converge to the same vertex of $T_{t_0}$.
\end{lem}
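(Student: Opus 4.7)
I argue by contradiction, assuming $p_n \in e_1$ and $q_n \in e_2$ are critical points of $u_{t_n}$ lying on two distinct edges of $T_{t_n}$ adjacent to a common vertex $v$ of $T_{t_0}$, and both converging to $v$. The strategy is to use the two critical-point equations in tandem to force the simultaneous vanishing of the leading Fourier coefficients $a_0(u_{t_0}, v)$ and $a_1(u_{t_0}, v)$ in expansion (\ref{neumannexpansion}), contradicting Corollary \ref{cantbothvanish}. Lemma \ref{convergethenzero}, applied to $p_n$ alone, already supplies one of these vanishings: $a_0$ when $v$ is acute in $T_{t_0}$, and $a_1$ when $v$ is obtuse. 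The role of $q_n$ is to supply the complementary vanishing.

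After an isometry, I place $v$ at the origin of the Poincar\'e disk $\DDk$, with $e_1$ along the positive $x$-axis and $e_2$ along the ray $\theta = \beta_n$, where $\beta_n$ is the interior angle of $T_{t_n}$ at $v$ and $\nu_n = \pi/\beta_n$. Neumann boundary conditions make $\partial_\theta u_{t_n}$ vanish on each edge, so the critical-point conditions at $p_n = (r_n, 0)$ and $q_n = (s_n, \beta_n)$ reduce to the scalar equations $\partial_r u_{t_n}(r_n, 0) = 0$ and $\partial_r u_{t_n}(s_n, \beta_n) = 0$. Differentiating (\ref{neumannexpansion}) term by term and using $\cos(\nu_n \beta_n) = -1$, these become, after dividing by $r_n$ and $s_n$ respectively,
\begin{align*}
0 &= 2 a_0^{(n)} g_0'(0) + \nu_n a_1^{(n)} g_{\nu_n}(0)\, r_n^{\nu_n - 2} + R_n(r_n), \\
0 &= 2 a_0^{(n)} g_0'(0) - \nu_n a_1^{(n)} g_{\nu_n}(0)\, s_n^{\nu_n - 2} + R_n(s_n),
\end{align*}
where $a_j^{(n)} := a_j(u_{t_n}, v)$ and $R_n(r)$ collects strictly higher-order contributions. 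By the $H^1$-continuity of $t \mapsto u_t$, the coefficients $a_j^{(n)}$ are uniformly bounded and converge to $a_j(u_{t_0}, v)$.

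In the acute case ($\nu_n > 2$), subtracting the two equations and dividing by the positive quantity $r_n^{\nu_n - 2} + s_n^{\nu_n - 2}$ isolates $\nu_n a_1^{(n)} g_{\nu_n}(0)$ on one side; the remaining contributions from $R_n$ involve ratios of the form $(r_n^j \pm s_n^j)/(r_n^{\nu_n - 2} + s_n^{\nu_n - 2})$ with $j \in \{2, \nu_n, 2\nu_n - 2, \dots\}$, each of which tends to $0$ once one observes that the individual equations already force $a_0^{(n)} = O(r_n^{\nu_n - 2}) + O(s_n^{\nu_n - 2})$. This yields $a_1(u_{t_0}, v) = 0$, which together with $a_0(u_{t_0}, v) = 0$ from Lemma \ref{convergethenzero} contradicts Corollary \ref{cantbothvanish}. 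The obtuse case is analogous: one instead normalizes by $r^{\nu_n - 1}$ to make the $a_1$ term dominant, adds rather than subtracts the two equations, divides by $r_n^{2 - \nu_n} + s_n^{2 - \nu_n}$, and deduces $a_0(u_{t_0}, v) = 0$. The borderline case $\nu_n = 2$ collapses to a non-singular $2 \times 2$ linear system in $(a_0^{(n)}, a_1^{(n)})$ whose only limit is $(0, 0)$.

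The principal technical obstacle is controlling $R_n$ in the acute regime when $\nu_n > 4$, where the $a_0$-driven $r^2$ correction is a priori larger than the $a_1$ leading term $r^{\nu_n - 2}$; the resolution is that $a_0^{(n)}$ itself decays at rate $r_n^{\nu_n - 2}$, so this apparently dangerous correction effectively scales as $r_n^{\nu_n}$, which is still smaller than $r_n^{\nu_n - 2}$ after dividing by the normalizing factor.
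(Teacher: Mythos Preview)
Your proof is correct and follows essentially the same route as the paper's (which defers to Lemma~9.2 of \cite{judgemondal}): place the vertex at the origin, use the radial critical-point equations $\partial_r u_{t_n}=0$ on each adjacent edge, and exploit the sign flip $\cos(\nu_n\beta_n)=-1$ so that adding and subtracting the two equations isolates the $a_0$- and $a_1$-contributions respectively, forcing $a_0(u_{t_0},v)=a_1(u_{t_0},v)=0$ and contradicting Corollary~\ref{cantbothvanish}. Your explicit handling of the $\nu_n>4$ subtlety---bootstrapping $a_0^{(n)}=O(r_n^{\nu_n-2})$ from a single equation so that the $a_0$-driven $r^2$ correction becomes effectively $O(r^{\nu_n})$---is exactly the point that needs care, and you treat it correctly; the obtuse and right-angle cases are likewise sound.
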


\begin{lem}\label{sequencetomixed}
    Suppose that $u_t$ is a first mixed eigenfunction for each $t$. Suppose that there exists a sequence $t_n\in [0,1]$ converging to some $t_0$ such that each $u_{t_n}$ has a non-vertex critical point $p_n$ contained in a Neumann edge. Then $p_n$ does not converge to a mixed vertex of $T_{t_0}$.
\end{lem}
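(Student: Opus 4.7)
The plan is to argue by contradiction in the spirit of Lemmas \ref{convergethenzero} and \ref{convtosamevertex}, replacing the Neumann-vertex expansion (\ref{neumannexpansion}) by the mixed-vertex expansion (\ref{mixedexpansion}). Suppose for contradiction that $p_n\to v$, where $v$ is a mixed vertex of $T_{t_0}$. For $n$ large, $p_n$ lies on an edge of $T_{t_n}$ adjacent to $v$, and since $p_n$ sits on a Neumann edge while $v$ is adjacent to exactly one Neumann edge, we may assume $p_n$ lies on this Neumann edge $e$. Place $v$ at the origin of the Poincar\'e disk model of $M_{\kappa(t_n)}$ via an isometry, with the Dirichlet edge of $T_{t_n}$ along the positive $x$-axis and $e$ along the ray $\{\theta=\beta(t_n)\}$, where $\beta(t_n)$ is the interior angle of $T_{t_n}$ at $v$.

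In these coordinates, the expansion (\ref{mixedexpansion}) together with the identity $\sin((j+\tfrac12)\nu_n\beta(t_n))=(-1)^j$ (where $\nu_n=\pi/\beta(t_n)$) yields, along $e$,
\[\partial_ru_{t_n}(r,\beta(t_n))=b_0(u_{t_n},v)\cdot\tfrac{\nu_n}{2}\,r^{\nu_n/2-1}+O\big(r^{\nu_n/2-1+\alpha}\big)\]
for some $\alpha>0$. By Lemma \ref{coeffnonzero}, $b_0(u_{t_0},v)\neq 0$. The Frobenius coefficient $b_0(u_t,v)$ can be extracted by integrating $u_t(r,\theta)$ against $\sin((\nu_t/2)\theta)$ over $\theta\in[0,\beta(t)]$ at a fixed small $r>0$ and dividing by the explicit prefactor $(\beta(t)/2)\,r^{\nu_t/2}g_{\nu_t/2}(r^2)$; together with the continuity of $t\mapsto u_t$ in $H^1(T)$ this forces $b_0(u_{t_n},v)\to b_0(u_{t_0},v)\neq 0$, while $\nu_n\to\nu_0\in(1,\infty)$ since $\beta(t_n)\to\beta(t_0)\in(0,\pi)$. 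In particular, the leading coefficient is bounded away from zero as $n\to\infty$.

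The main obstacle is to make the error term $O(r^{\nu_n/2-1+\alpha})$ uniform in $n$. To handle this, one reflects $u_{t_n}$ across the Neumann edge $e$ to obtain a real-analytic eigenfunction $\overline u_{t_n}$ on a fixed Euclidean disk $B$ around $v$. Standard linear perturbation theory and interior elliptic regularity, combined with boundary regularity at the Dirichlet edge (obtained via odd reflection), give convergence of $\overline u_{t_n}$ to $\overline u_{t_0}$ in $C^k(B\setminus\{v\})$ for every $k$. Via the Frobenius structure of (\ref{mixedexpansion}), this $C^k$-convergence promotes the pointwise expansion above to a uniform remainder bound of the stated form. We conclude that there exist $r_0>0$ and $N$ such that $\partial_ru_{t_n}(\cdot,\beta(t_n))$ does not vanish on $(0,r_0)$ for all $n\geq N$, contradicting the assumption that $p_n\to v$.
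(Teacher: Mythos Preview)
Your argument is correct and follows essentially the same route as the paper: contradiction via the mixed-vertex expansion (\ref{mixedexpansion}) together with continuity of $b_0(u_t,v)$ and Lemma \ref{coeffnonzero}. The paper streamlines the final step by evaluating $\partial_r u_{t_n}$ directly at the critical point $p_n$, dividing by $r_n^{\nu_n/2-1}$, and letting $n\to\infty$ to force $b_0(u_{t_n},v)\to 0$, which sidesteps the need to establish a uniform-in-$n$ interval on which the derivative is nonvanishing.
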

\begin{proof}
    Suppose to the contrary that such a sequence exists. We show that if $p_n$ converges to a mixed vertex $v$ of $T_{t_0}$, then the coefficient $b_0(u_{t_0},v)$ of (\ref{mixedexpansion}) vanishes, contradicting Lemma \ref{coeffnonzero}.\\
    \indent We now work in the Poincar\'e disk model in order to use the expansion (\ref{mixedexpansion}) with the vertex at the origin as in Section \ref{vertexsection}. In this model, the angle of the vertex changes with $n$, so we denote this angle by $\beta_n$. Similarly, we define $\nu_n=\pi/\beta_n$. We abbreviate the coefficients by $b_0(t_n):=b_0(u_{t_n},v)$. Suppose that for each $n$, $p_n=(r_n,\beta_n)$ is a critical point of $u_n$ contained in the Neumann edge adjacent to $v$. Since these are critical points, the expansion (\ref{mixedexpansion}) gives 
    \[0=\partial_r u_{t_n}(p_n)=\frac12\nu_nb_0(t_n)r_n^{\nu_n/2-1}g_{\nu_n/2}(0)+O(r_n^{3\nu_n/2-1})+O(r_n^{\nu_n/2+1})\] as $n\to\infty$. Dividing both sides by $r_n^{\nu_n/2-1}$ gives \[0=\frac12\nu_nb_0(t_n)g_{\nu_n/2}(0)+O(r_n^{\nu_n})+O(r_n^2).\] Since $\nu_n$ is bounded below by a positive constant and $g_{\nu_n/2}(0)=1$, it follows that $b_0(t_n)\to0$ as $n\to\infty$. By the continuity of the coefficients along the path of triangles, we get $b_0(u_{t_0},v)=0$.
\end{proof}

\section{Critical points that imply the existence of other critical points}\label{implicationsection}

Continuing our study of the stability properties of critical points initiated in Section \ref{perttheory}, we now seek to understand when the existence of certain critical points imply the existence of other, often more stable, critical points. We do so primarily by studying the zero-level sets of Killing fields applied to eigenfunctions. Throughout the section, let $T\subseteq \Mk$ be an open geodesic triangle for any $\kappa\in \Rbb$. Unless otherwise stated, we will take $u$ to be a first mixed or second Neumann eigenfunction of $T$. When $u$ is a second Neumann eigenfunction, we suppose that $\kappa<0$.

\begin{lem}\label{arcsnearcp}
    Suppose that $p$ is a critical point of $u$ contained in a Neumann edge $e$. If $L_e$ is a Killing field tangent to $e$, then $p$ is the endpoint of at least one arc of $\Zcal(L_eu)\cap T$.
\end{lem}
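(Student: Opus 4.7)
The plan is to extend $u$ across $e$ by reflection and to analyze the zero set of $L_e u$ in a neighborhood of $p$ using Lemma \ref{nodalcritical}.

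First I would verify that $p \in \Zcal(L_e u)$. Because $L_e$ is everywhere tangent to $e$, at each point of $e$ the value of $L_e u$ is a nonzero scalar multiple of the tangential derivative of $u$ along $e$. Since $p$ is a critical point of $u$, this tangential derivative vanishes at $p$, so $L_e u(p)=0$.

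Next I would extend $u$ across $e$. Let $\phi_e$ be the isometric reflection of $\Mk$ fixing pointwise the geodesic $\gamma$ containing $e$. The Neumann condition on $e$ yields, by even reflection, a real-analytic eigenfunction $\tilde u$ on a neighborhood of the interior of $e$. The key observation is that $\phi_e$ commutes with the one-parameter group $\{\psi_t\}$ generated by $L_e$: the conjugate family $\phi_e \psi_t \phi_e^{-1}$ is also a family of loxodromic isometries with axis $\gamma$ and agrees with $\psi_t$ on $\gamma$ (where $\phi_e$ acts as the identity) with matching normal derivatives, so by the rigidity of isometries the two coincide. Hence $\phi_{e*}L_e=L_e$, which implies $L_e\tilde u$ is even across $e$, and in particular $\partial_\nu(L_e\tilde u)=0$ on $e$. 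This can also be checked directly in the upper-half-plane model of $\Mm$, where $L_e=x\partial_x+y\partial_y$ for $e$ on the $y$-axis and $\phi_e(x,y)=(-x,y)$.

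I would then split on whether $\nabla(L_e\tilde u)(p)$ vanishes. If it does not vanish, then because the normal component $\partial_\nu(L_e\tilde u)(p)$ is zero, the gradient is purely tangential to $e$. The implicit function theorem then shows that $\Zcal(L_e\tilde u)$ near $p$ is a single smooth arc whose tangent at $p$ is perpendicular to $e$; it crosses $e$ transversely, and its portion in $T$ is an arc of $\Zcal(L_e u)\cap T$ with endpoint $p$. If instead $\nabla(L_e\tilde u)(p)=0$, then $p$ is a nodal critical point of $L_e\tilde u$, and Lemma \ref{nodalcritical} provides $n\geq 2$ analytic arcs through $p$ meeting at the $2n$ equal angles $\pi/n$.

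In this degenerate case I would conclude using the $\phi_e$-symmetry of $\Zcal(L_e\tilde u)$. Any arc through $p$ whose tangent direction lies along $e$ is mapped by $\phi_e$ to an arc with the same tangent at $p$; by the distinct-tangent clause of Lemma \ref{nodalcritical}, it coincides with its own reflection, hence is $\phi_e$-invariant. A $\phi_e$-invariant analytic arc tangent to $e$ at $p$ must be locally contained in the fixed set $e$ of $\phi_e$, since otherwise it would lie on one side of $e$ near $p$ and its reflection would lie on the other. Thus at most one of the $n$ arcs is tangent to $e$, leaving at least $n-1\geq 1$ arcs transverse to $e$, each of which by symmetry contributes one half-arc into $T$ with $p$ as endpoint. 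The main subtlety is precisely this symmetry argument in the degenerate case — ruling out a configuration in which every arc at $p$ fails to enter $T$ — and the distinct-tangents plus $\phi_e$-invariance observation is what allows us to resolve it.
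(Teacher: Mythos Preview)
Your proof is correct and shares the paper's overall strategy: extend $u$ evenly across $e$, note that $L_e$ commutes with the reflection $\phi_e$ so that $L_e\tilde u$ is even, and then read off the conclusion from the local nodal structure at $p$. The one substantive difference is in how the possibility that the only nodal arc through $p$ lies in $e$ is eliminated. The paper dispatches this in one line by invoking Theorem~\ref{finite}: if such an arc lay in $e$, analyticity would force $L_e u\equiv 0$ on $e$, hence $e\subseteq\crit(u)$, contradicting finiteness of the critical set; the even symmetry then forces the remaining arc to meet $T$. You instead split on whether $\nabla(L_e\tilde u)(p)$ vanishes, using the evenness to pin the tangent perpendicular to $e$ in the nondegenerate case, and the equiangular structure of Lemma~\ref{nodalcritical} together with the $\phi_e$-symmetry in the degenerate case to show at most one of the $n\geq 2$ arcs can lie along $e$. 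Your route is a bit longer but more self-contained---it does not rely on Theorem~\ref{finite}, which is itself a nontrivial result---while the paper's route is terser by leaning on machinery already in hand.
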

\begin{proof}
    Note that $L_e$ is invariant under reflection about the geodesic containing $e$. Since we can extend $u$ analytically to be even with respect to this reflection, $L_eu$ is also even with respect to this reflection. Since $L_eu$ is a Laplace eigenfunction, Lemma \ref{nodalcritical} implies that $p$ is contained in an arc in $\Zcal(L_eu)$. By Theorem \ref{finite}, this arc does not lie in $e$. Since this arc is reflection invariant, it must intersect $T$. 
\end{proof}

\begin{lem}\label{degenimpliesnondegen}
    Suppose that $u$ is a second Neumann eigenfunction. If $u$ has a degenerate critical point in an edge $e$ of $T$, then $u$ has another critical point in a different edge of $T$, and this critical point is stable under perturbation.
\end{lem}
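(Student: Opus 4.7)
My plan is to exploit the loxodromic Killing field $L_e$ tangent to $e$, study the nodal set of the eigenfunction $L_e u$, and trace its arcs from $p$ out to $\partial T \setminus e$ to locate another critical point. First I would set up coordinates near $p$ with $e$ along the $x$-axis; the even extension of $u$ across $e$ forces $\Hess u(p)$ to be diagonal, so degeneracy splits into Case A ($\partial_{xx}u(p)=0$) and Case B ($\partial_{yy}u(p)=0$). A short calculation gives $\nabla(L_e u)(p) = (|L_e(p)|\,\partial_{xx}u(p),\, 0)$, so $p$ is a nodal critical point of $L_e u$ in Case A and a regular zero with gradient tangent to $e$ in Case B. By Lemma \ref{nodalcritical} and the reflection symmetry of $L_e u$ across $e$, the nodal picture at $p$ consists of $2n$ equally spaced rays; I would rule out the symmetric configuration in which two rays lie along $e$ (which would force $e \subseteq \Zcal(L_e u)$, giving $\nabla u \equiv 0$ on $e$ and contradicting Theorem \ref{finite}), so at least one arc of $\Zcal(L_e u) \cap T$ emanates from $p$ (in fact at least two in Case A).

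Next, I would trace such an arc $\alpha$ using Proposition \ref{possiblecomponents}: it terminates at some $q \in \partial T$. Lemma \ref{neumannloop}(2) forbids both endpoints of $\alpha$ from lying in $e$, so $q$ lies in the closure of a different edge $e'$. Assuming $q$ is not a vertex, the Neumann condition on $e'$ makes $\nabla u(q)$ tangent to $e'$, while $L_e u(q) = \langle L_e(q), \nabla u(q) \rangle = 0$ forces $\nabla u(q) \perp L_e(q)$. By Lemma \ref{nowhereortho}, provided the shared vertex of $e$ and $e'$ does not have angle $\pi/2$, $L_e$ is not normal to $e'$ at $q$, so $\nabla u(q) = 0$; that is, $q$ is the desired critical point of $u$ on a different edge.

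For stability, if $q$ is non-degenerate then Lemma \ref{nondegenstable} applies directly. Otherwise, I would invoke Lemma \ref{stablecp} by verifying that $q$ is the endpoint of exactly one arc of $\Zcal(L_e u) \cap T$ (or of $\Zcal(L_{e'}u)\cap T$) and that the edge containing $q$ is not contained in that nodal set, both extracted from explicit Hessian computations at $q$. The main obstacles I anticipate are: (i) the right-angle special cases, in which $L_e$ is normal to $e'$ everywhere and the non-orthogonality argument fails, requiring a substitute choice of Killing field (e.g., $L_{e'}$ or an elliptic field centered at a suitable point, together with an analysis via Lemma \ref{nowhereortho}); (ii) excluding the possibility that $\alpha$ terminates at the vertex opposite to $e$, which requires combining the vertex expansions of Section \ref{vertexsection} with Lemma \ref{noaccumatvertex}; and (iii) implementing the stability step without circular reasoning when $q$ itself happens to be degenerate, which I expect to address by an inductive argument exploiting the finiteness of $\crit(u)$ from Theorem \ref{finite}.
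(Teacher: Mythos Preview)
Your overall strategy---trace nodal arcs of a Killing derivative of $u$ from $p$ out to another edge---is the right one, and your Case~A analysis is essentially the paper's argument for the tangential-degeneracy case. The gap is in Case~B. Using $L_e$ alone, you correctly obtain exactly one arc of $\Zcal(L_eu)\cap T$ leaving $p$, but nothing in your list of tools excludes that single arc from terminating at the vertex opposite $e$: Lemma~\ref{derivativeneumannvertex} merely gives angle conditions under which that vertex \emph{can} be such an endpoint, and those conditions may well be satisfied. The counting device (``at most one arc can reach the opposite vertex, so some other arc must land in an edge interior'') requires at least two arcs and is therefore available only in Case~A. Your obstacle (ii) is thus not a technicality but the heart of the matter, and the vertex expansions will not resolve it.

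The paper handles Case~B by dropping $L_e$ and using instead an elliptic Killing field $R$ centered at a vertex $v$ adjacent to $e$. Because $R$ is orthogonal to both edges through $v$, those two edges already lie in $\Zcal(Ru)$; the same Taylor computation shows $Ru=O(2)$ at $p$, so at least two nodal arcs of $Ru$ pass through $p$, one of which is $e$ itself. The remaining arc in $T$ can end neither on $e$ nor on the other edge through $v$ (else a loop forms, violating Lemma~\ref{neumannloop}), so it lands in the interior of the edge opposite $v$. Lemma~\ref{nowhereortho} applied to $R$ makes that endpoint a critical point of $u$, and Lemma~\ref{stablecp} gives stability directly. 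This single change of Killing field simultaneously dissolves your obstacles (i), (ii), and (iii): there is no right-angle exception for $R$, the opposite-vertex issue never arises, and the endpoint is automatically a degree-one endpoint, so no inductive stability argument is needed.
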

\begin{proof}
    Since $\kappa<0$, it suffices to consider the case $\kappa=-1$, and we work in the upper half-plane. Suppose that $e$ is contained in the $y$-axis and that $p=(0,1)$. Since $u$ extends to be even about the $y$-axis, the Taylor expansion of $u$ about $p$ is of the form 
    \begin{equation}\label{taylorexp}
        u(x,y)=u(p)+ax^2+b(y-1)^2+O(3),
    \end{equation}
    where if $d((x,y),(0,1))$ is the geodesic distance from $(x,y)$ to $(0,1)$, we denote \[O(k):=O(d((x,y),(0,1))^k).\] Then $p$ is a degenerate critical point if and only if either $a$ or $b$ equals $0$.\\
    \indent Suppose that $b=0$. As in Section \ref{killingfields}, $L_e=x\partial_x+y\partial_y$ is a Killing field tangent to $e$. We may rewrite this as $L_e=x\partial_x+[(y-1)+1]\partial_y$. Since $b=0$, this gives $L_eu(x,y)=O(2)$. Since $L_eu$ satisfies the eigenfunction equation, Lemma \ref{nodalcritical} implies that $\Zcal(Xu)$ has at least two arcs meeting transversely at $p$. By Theorem \ref{finite}, neither of these arcs is contained in the $y$-axis. Thus, at least two arcs in $\Zcal(Xu)\cap T$ emanate from $p$. These arcs may not intersect each other away from $p$ and may not have another endpoint in the closure of $e$ by Lemma \ref{neumannloop}. By Lemma \ref{derivativeneumannvertex}, at most one of these arcs can have the vertex opposite to $p$ as an endpoint. Thus, one of these arcs has a degree one vertex in the interior of an edge not equal to $e$. This endpoint is a critical point of $u$ that, by Lemma \ref{stablecp}, is stable under perturbation.\\
    \indent Now suppose that $a=0$. Let $R$ be an elliptic Killing field centered at a vertex $v$ adjacent to $e$. Near $p$, after rescaling if necessary, $R$ may be expressed as \[R=(1+O(1))\partial_x+O(1)\partial_y\] (note that in this nonstandard notation, $O(1)=O(d((x,y),(0,1))$). Then we have $Ru(x,y)=O(2)$. Once again, Lemma \ref{nodalcritical} implies that at least two arcs in $\Zcal(Ru)$ intersect transversely at $p$. Since $R$ is orthogonal to two edges of $T$, one of these arcs is contained in the $y$-axis, and the other edge adjacent to $v$ is also contained in $\Zcal(Ru)$. By Lemma \ref{neumannloop}, the arc in $\Zcal(Ru)\cap T$ with an endpoint at $p$ must have its other endpoint in the interior of the edge opposite to $v$. This point is a critical point that, by Lemma \ref{stablecp}, is stable under perturbation. 
 \end{proof}

\begin{lem}\label{Nintimpliesext}
    If $u$ is a second Neumann eigenfunction with an interior critical point, then $u$ has at least one critical point in the interior of each edge of $T$. 
\end{lem}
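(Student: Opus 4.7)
The plan is: for each edge $e$ of $T$, find a critical point of $u$ in the interior of $e$ by analyzing the zero set of an appropriate Killing-field derivative of $u$ that is forced to vanish at $p$.

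First I would introduce the loxodromic Killing field $L=L_e$ tangent to $e$. Since $\nabla u(p)=0$, we have $Lu(p)=0$, and $Lu$ is an eigenfunction with the same eigenvalue $\mu$. By Lemma \ref{nodalcritical} some real-analytic arc of $\Zcal(Lu)$ passes through $p$, and by Lemma \ref{neumannloop}(2) no arc of $\Zcal(Lu)\cap T$ is a loop or has both endpoints in $\overline{e}$. Tracing the arc through $p$ in both directions inside $T$ yields two endpoints in $\partial T$, at most one of which lies in $\overline{e}$. If one endpoint does lie in the interior of $e$, then by tangency of $L$ to $e$ together with the Neumann condition $\partial_\nu u\equiv 0$, that endpoint is a critical point of $u$ on $e$, and we are done.

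To force an arc to actually reach the interior of $e$, I would supplement this with the elliptic Killing field $R$ centered at the vertex $v$ opposite to $e$. Then $Ru(p)=0$ (still because $\nabla u(p)=0$), and because $R$ is orthogonal to every geodesic through $v$ --- in particular to each of the two edges $e_j,e_k$ adjacent to $v$ --- the Neumann condition forces $Ru\equiv 0$ on $e_j\cup e_k$. By Lemma \ref{nowhereortho}, $R$ is nowhere orthogonal to $e$ (since $v\notin e$), so zeros of $Ru$ in the interior of $e$ are precisely critical points of $u|_e$. Let $\Gamma$ denote the connected component of $\Zcal(Ru)\cap\overline{T}$ containing $p$. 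If $\Gamma$ is disjoint from $\overline{e_j}\cup\overline{e_k}$, then by Proposition \ref{possiblecomponents} every endpoint of every arc of $\Gamma$ lies in $\partial T\setminus(\overline{e_j}\cup\overline{e_k})$, which is exactly the interior of $e$, and we obtain the desired critical point.

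The main obstacle is the remaining case, in which $\Gamma$ meets $\overline{e_j}\cup\overline{e_k}$ and therefore, by connectedness of the edges and the fact that $Ru$ vanishes identically on them, contains $e_j$ or $e_k$ in its entirety. Here I would exploit the antisymmetry of $Ru$ under reflection across $e_j$ and across $e_k$ (since $u$ extends evenly while $R$ conjugates to $-R$ under reflection through its center $v$) to conclude that arcs of $\Zcal(Ru)\cap T$ meeting $e_j$ do so transversely at nodal critical points of $Ru$ on $e_j$. Combining Lemma \ref{derivativeneumannvertex}, Corollary \ref{cantbothvanish}, and the loop-and-endpoint restriction from Lemma \ref{neumannloop} applied back to $Lu$, I would argue that the branching arcs emerging from $e_j\cup e_k$ into $T$ cannot all terminate at vertices of $T$ or loop back to $\overline{e_j}\cup\overline{e_k}$, so at least one must exit into the interior of $e$, yielding a critical point of $u$ there. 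The careful vertex-bookkeeping at each corner of $T$, using the expansion (\ref{neumannexpansion}), is where I expect most of the work to lie.
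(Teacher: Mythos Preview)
Your choice of the elliptic Killing field $R$ centered at the vertex $v$ opposite to $e$ is exactly the paper's approach (which simply follows the second half of the proof of Lemma~\ref{degenimpliesnondegen}; see also Lemma~7.1 of \cite{judgemondal}). The detour through $L_e$ is unnecessary, and your ``remaining case'' is left as an unfinished sketch whose proposed vertex bookkeeping would be substantial. The gap is that you have not noticed the direct way to dispose of that case.

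The point you are missing is that the \emph{proof} of Lemma~\ref{neumannloop}(2) applies to $\phi=Ru$ even though the \emph{statement} is phrased for $\phi=Xu$ with $X$ tangent to an edge. What that proof actually uses is that the boundary term $\int_{\partial\Omega}\phi\,\partial_\nu\phi$ vanishes, and this holds whenever $\phi=0$ on $\partial\Omega\cap\partial T$; here $Ru\equiv 0$ on both edges $e_j,e_k$ adjacent to $v$. So if the arc of $\Zcal(Ru)$ through $p$ were a loop, or had both endpoints in $\overline{e_j}\cup\overline{e_k}$, it would (together with pieces of $e_j\cup e_k$) bound a region $\Omega\subsetneq T$ disjoint from the interior of $e$; then $\psi=Ru\cdot\chi_\Omega$ is a valid test function for the mixed problem with Dirichlet set $e_j\cup e_k$ and Neumann edge $e$, with Rayleigh quotient equal to $\mu$, contradicting Lemma~\ref{mixedineq} exactly as in Lemma~\ref{neumannloop}. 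Hence the arc through $p$ must land in the interior of $e$, and Lemma~\ref{nowhereortho} makes that endpoint a critical point of $u$. No reflection-antisymmetry, branching, or vertex analysis is required.
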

\begin{proof}
    This is similar to the last part of the proof of Lemma \ref{degenimpliesnondegen}, and we omit the proof. See also Lemma 7.1 of \cite{judgemondal}.
\end{proof}

\begin{lem}\label{mixedintimpliesext}
    Suppose that $u$ is a first mixed eigenfunction satisfying Dirichlet conditions on exactly one edge of $T$. If $u$ has an interior critical point, then $u$ has at least one critical point on each Neumann edge. 
\end{lem}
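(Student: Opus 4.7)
The plan is to adapt the proof of Lemma \ref{Nintimpliesext} to the mixed setting. Fix a Neumann edge $e_1$ on which I seek a critical point; the case of $e_2$ is symmetric. Write $e_2$ for the other Neumann edge and $e_D$ for the Dirichlet edge, and let $w = e_2 \cap e_D$ be the mixed vertex opposite $e_1$. Let $R$ be an elliptic Killing field centered at $w$. Since $R$ is orthogonal to every geodesic through $w$, it is normal to both $e_2$ and $e_D$. The Neumann condition on $e_2$ then forces $Ru \equiv 0$ on $e_2$. On $e_D$, where $u = 0$, we have $Ru = \pm |R|\,\partial_\nu u$, and Hopf's lemma forces $\partial_\nu u \neq 0$ on the smooth part of $e_D$, so $Ru$ vanishes on $e_D$ only at $w$. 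On $e_1$, Neumann gives $Ru = \langle R, T_{e_1}\rangle\,\partial_{T_{e_1}} u$, and Lemma \ref{nowhereortho} (using $w \notin e_1$) ensures $\langle R, T_{e_1}\rangle$ is nowhere zero on $e_1$. Hence producing a critical point of $u$ on $e_1$ is equivalent to producing a non-vertex zero of $Ru$ on $e_1$.

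Since $Ru(p) = 0$, Lemma \ref{nodalcritical} provides a nodal arc $\alpha$ of $Ru$ through $p$, and by Proposition \ref{possiblecomponents} $\alpha$ is either a loop or has two endpoints in $\partial T$. I will show $\alpha$ must terminate in the interior of $e_1$. To rule out loops and arcs with both endpoints in $\overline{e_2}$, I adapt the variational argument of Lemma \ref{neumannloop}(1) to the present setting: such a configuration would, together possibly with a segment of $e_2$, enclose a disk $\Omega \subsetneq T$ on whose entire boundary $Ru$ vanishes. Then $Ru|_\Omega \in H^1_0(\Omega)$ is nonzero (by unique continuation, since $Ru \not\equiv 0$ on $T$) and integration by parts gives Rayleigh quotient $\lambda$, so $\lambda_1^D(\Omega) \leq \lambda$. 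But strict Dirichlet domain monotonicity combined with $\lambda \leq \lambda_1^D(T)$ yields $\lambda_1^D(\Omega) > \lambda_1^D(T) \geq \lambda$, a contradiction. Further, $\alpha$ cannot terminate in the interior of $e_D$, since $Ru \neq 0$ there. The remaining possible terminations of $\alpha$ therefore lie in the interior of $e_1$ or at the vertices $v = e_1 \cap e_2$ or $v_1 = e_1 \cap e_D$.

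Finally, I exclude configurations in which both endpoints of $\alpha$ are among $\{v, v_1\}$ using Lemmas \ref{derivativeneumannvertex}, \ref{derivativemixedvertex}, and \ref{coeffnonzero}, which pin down the number and angular directions of arcs of $\Zcal(Ru)$ emanating from each vertex into $T$. A short direction-counting argument, based on the angular position of $R$ at $v$ and $v_1$ relative to the edges adjacent to each vertex, rules out the degenerate configuration where $\alpha$ connects $v$ to $v_1$ through $p$. Consequently $\alpha$ has a non-vertex endpoint on $e_1$, providing the desired critical point. The main obstacle is the variational adaptation of Lemma \ref{neumannloop}(1) to the elliptic field $R$ (which is not tangent to any edge but instead satisfies a Dirichlet-type condition on $e_2$), together with the combinatorial vertex case analysis that excludes the vertex-to-vertex arc configuration.
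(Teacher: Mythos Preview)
Your approach is essentially the paper's: center an elliptic Killing field $R$ at the mixed vertex $w=e_2\cap e_D$ opposite the target Neumann edge $e_1$, use $Ru\equiv 0$ on $e_2$ and $Ru\neq 0$ on the interior of $e_D$, then trace a nodal arc of $Ru$ from the interior critical point and argue it must land in the interior of $e_1$. Your variational replacement for Lemma~\ref{neumannloop}(1) is fine (and in fact cleaner, since $R$ is not tangent to $e_2$ so the paper's citation of that lemma is itself a mild abuse; the underlying test-function argument is identical).

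There is one imprecision in your endgame. After ruling out loops and arcs with both endpoints in $\overline{e_2}$, you only know that \emph{at least one} endpoint of $\alpha$ lies outside $\overline{e_2}$, hence in $(\text{int }e_1)\cup\{v_1\}$. The other endpoint could still be in the interior of $e_2$ or at $w$. So it is not enough to ``exclude configurations in which both endpoints of $\alpha$ are among $\{v,v_1\}$''; you must exclude $v_1$ as an endpoint outright. The paper does exactly this in one line: since $R$ is orthogonal at $v_1$ to the geodesic $e_D$ through $w$, the angle $\alpha$ in Lemma~\ref{derivativemixedvertex} equals $\pi/2$, which lies in neither of the intervals listed there (for either acute or non-acute $v_1$), so $v_1$ is not an endpoint of any arc of $\Zcal(Ru)\cap T$. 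With that single observation---which your invocation of Lemma~\ref{derivativemixedvertex} would yield---the argument closes immediately, and no analysis at $v$ is needed at all since $v\in\overline{e_2}$.
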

\begin{proof}
    Let $v$ be a mixed vertex of $T$. Let $R$ denote an elliptic vector field centered at $v$. Then $Ru$ vanishes identically on the Neumann edge adjacent to $v$ and does not vanish anywhere in the interior of the Dirichlet edge by Lemma \ref{nodircp}. By Lemma \ref{derivativemixedvertex}, the other mixed vertex of $T$ is not an endpoint of an arc in $\Zcal(Ru)$. If $u$ has an interior critical point, then by Lemma \ref{nodalcritical}, an arc in $\Zcal(Ru)$ intersects $T$. By Lemma \ref{neumannloop}, this arc is not a loop, and at most one endpoint of this arc is in the closure of the Neumann edge adjacent to $v$. Thus, this arc has an endpoint in the edge opposite to $v$, and by Lemma \ref{nowhereortho}, this endpoint is a critical point of $u$. A symmetric argument yields another critical point on the other Neumann edge of $T$.
\end{proof}

The following should be compared with Proposition 8.1 \cite{judgemondal}. Judge and Mondal prove a similar statement in the case that the triangle is acute, but their proof does not carry over to the obtuse case.

\begin{lem}\label{morse}
    Suppose that $u$ is a second Neumann eigenfunction and that $T$ is obtuse. If $u$ has exactly one non-vertex critical point, then either every vertex of $T$ is a local extremum of $u$, or $u$ vanishes at an acute vertex of $T$. 
\end{lem}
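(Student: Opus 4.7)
The plan is to argue by contradiction. Suppose $u$ has exactly one non-vertex critical point $p$, yet neither conclusion holds: some vertex fails to be a local extremum and $u$ does not vanish at any acute vertex. Let $v_1$ denote the obtuse vertex of $T$ and $v_2,v_3$ the other two vertices; both non-obtuse angles are strictly acute because the angle sum in $M_\kappa$ with $\kappa<0$ is strictly less than $\pi$. Lemma~\ref{vertexmax} at $v_2$ and $v_3$ combined with the hypothesis $u(v_i)\neq 0$ forces $v_2,v_3$ to be local extrema, so the vertex that fails to be a local extremum must be $v_1$. Lemma~\ref{vertexmax} at $v_1$ together with Corollary~\ref{cantbothvanish} then gives $a_1(u,v_1)\neq 0$, with the coefficient $a_0(u,v_1)$ possibly zero (these are the two obtuse sub-cases I will need to keep separate).

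I next locate $p$. By Lemma~\ref{Nintimpliesext}, an interior critical point of a second Neumann eigenfunction forces a critical point on every edge, contradicting uniqueness; hence $p$ lies in the relative interior of some edge $e$. Let $L_e$ be a loxodromic Killing field whose axis contains $e$, and let $\gamma$ be an arc of $\Zcal(L_eu)\cap T$ with endpoint at $p$, furnished by Lemma~\ref{arcsnearcp}. By Lemma~\ref{neumannloop} $\gamma$ is not a loop and its other endpoint does not lie in $\bar e$. If that endpoint lay in the relative interior of a different edge $e'$, then since the vertex shared by $e$ and $e'$ is strictly acute—so $e$ and $e'$ are not orthogonal—Lemma~\ref{nowhereortho} guarantees that $L_e$ has nonzero tangential component along $e'$ there; combined with the Neumann condition on $e'$, the point would be a second non-vertex critical point of $u$, contradicting uniqueness. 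Hence $\gamma$ terminates at the vertex $w$ of $T$ opposite $e$.

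The final step is to invoke Lemma~\ref{derivativeneumannvertex} at $w$ to reach a contradiction. That lemma characterizes when $w$ can be an endpoint of an arc of $\Zcal(L_eu)\cap T$ in terms of the angle $\alpha$ that $L_e$ makes at $w$ with a prescribed edge direction, and furnishes distinct allowed angular ranges in three situations: (a) the obtuse vertex $v_1$ with $u(v_1)=0$ and $a_1\neq 0$; (b) the obtuse vertex $v_1$ with $u(v_1)\neq 0$; (c) an acute vertex with $u(w)\neq 0$ (relevant when $w\in\{v_2,v_3\}$, where the allowed window is narrow, of width $\beta_w<\pi/2$). The direction of $L_e$ at $w$ is determined geometrically by the position of the axis of $L_e$ relative to the two edges adjacent to $w$. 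Working in the Poincar\'e disk model with $w$ at the origin and one adjacent edge along the positive $x$-axis, the axis of $L_e$ is an explicit geodesic and $L_e(w)$ is tangent to the equidistant curve through the origin. The plan is to check that in each of the possible cases ($p\in e_1$, $e_2$, or $e_3$), this direction falls outside the allowed angular range at $w$, so $w$ cannot in fact be an endpoint of such an arc—contradicting the previous paragraph.

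The main obstacle is this angular verification. Computing the direction of a loxodromic Killing field at a point off its axis requires explicit parametrization of the equidistant curves in the disk model, and in the case $w=v_1$ the two sub-cases $a_0(v_1)=0$ vs.\ $a_0(v_1)\neq 0$ invoke genuinely different parts of Lemma~\ref{derivativeneumannvertex}. This is precisely where the argument of \cite[Proposition~8.1]{judgemondal} for acute triangles does not carry over mechanically: the obtuse-vertex angular windows are shifted in a way that requires fresh bookkeeping. I anticipate that the allowed angles in the obtuse case are complementary to the actual direction of $L_e$ at $v_1$ in a way that can be verified by a direct sign computation; once this incompatibility is established, the contradiction is complete.
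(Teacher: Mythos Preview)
Your proposal is incomplete at exactly the point you flag: the angular verification in step~7 is not carried out, and it is less routine than you suggest. Two concrete obstacles. First, Lemma~\ref{derivativeneumannvertex} is stated and proved only for \emph{acute} vertices (note the hypothesis ``near an acute vertex $v$''), so when $w=v_1$ you cannot invoke it directly; the obtuse-vertex analogue must be rederived from the expansion~\eqref{neumannexpansion}, and since $1<\nu<2$ the leading non-constant term of $u$ is $a_1r^\nu\cos(\nu\theta)$ regardless of whether $a_0$ vanishes, so your two obtuse sub-cases (a) and (b) actually collapse into one. Second, the direction of $L_e$ at $w$ is tangent to the equidistant curve through $w$, which in negative curvature is not simply the direction of $e$; that hyperbolic correction must be controlled before you can conclude that $\alpha$ misses the allowed window. (In the Euclidean limit the computation does go through---one finds $\alpha=\pi-\beta_j$ for the relevant opposite angle $\beta_j$, and this falls outside the window precisely because $\beta_1>\pi/2$---so the strategy is not hopeless, just unfinished.) A minor slip in step~5: the vertex shared by $e$ and $e'$ can be the obtuse $v_1$, not only an acute vertex; the conclusion that $e,e'$ are not orthogonal still holds because $T$ has no right angle, but your stated justification is wrong.

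The paper sidesteps all of this with a one-paragraph parity argument on $\partial T$. Assuming $u$ does not vanish at either acute vertex, Lemma~\ref{vertexmax} makes both acute vertices local extrema of $u$; by Lemmas~\ref{Nintimpliesext} and~\ref{degenimpliesnondegen} the unique critical point $p$ lies on an edge and is nondegenerate, hence a local extremum of $u|_{\partial T}$. But $u|_{\partial T}$ is a continuous function on a topological circle with finitely many critical points (Theorem~\ref{finite}), so it has an \emph{even} number of local extrema; with three already accounted for, the obtuse vertex must be a fourth, and the vertex expansion then forces $v_1$ to be a local extremum of $u$ itself. No Killing fields or angular bookkeeping are needed.
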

\begin{proof}
    Suppose that $u$ has exactly one critical point $p$ and does not vanish at either acute vertex of $T$. By Lemma \ref{vertexmax}, each acute vertex of $T$ is a local extremum of $T$. By Lemma \ref{Nintimpliesext}, $p$ lies in the interior of an edge $e$ of $T$, and by Lemma \ref{degenimpliesnondegen}, $p$ is non-degenerate. From the Taylor expansion (\ref{taylorexp}), one can see that $p$ is therefore a local extremum of the restriction $u|_e$ of $u$ to $e$.\\
    \indent Since $\partial T$ is homeomorphic to a circle and $u|_{\partial T}$ has finitely many critical points (by Theorem \ref{finite}), $u|_{\partial T}$ has an even number of local extrema. By the Neumann boundary condition, each local extremum of $u|_e$ is either a critical point of $u$ or a vertex of $T$. Since the only critical point of $u$ is a local extremum of $u|_e$, the obtuse vertex must also be a local extremum of $u|_e$. One can check from the expansion (\ref{neumannexpansion}) that the obtuse vertex is therefore a local extremum of $u$ itself.  
\end{proof}

\begin{lem}\label{zerocoeffthencp}
    Let $u$ be a second Neumann eigenfunction, and suppose that $v$ is a vertex of $T$. If $a_1(u,v)=0$ in expansion (\ref{neumannexpansion}), then $u$ has a critical point on the edge of $T$ opposite to $v$.
\end{lem}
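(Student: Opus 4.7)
\medskip
\noindent\textit{Proof plan.}
The plan is to apply an elliptic Killing field $R$ centered at $v$, show that $Ru$ vanishes on both edges of $T$ adjacent to $v$, and then track a nodal arc of $Ru$ emanating from $v$ into $T$---which I claim must terminate in the interior of the opposite edge $e_3$, producing the desired critical point of $u$.

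First I would set up coordinates by placing $v$ at the origin of the Poincar\'e disk model $\DDk$ as in Section \ref{vertexsection}, so that (up to rescaling) $R=\partial_\theta$ in polar coordinates and the two edges $e_1,e_2$ adjacent to $v$ lie along $\theta=0$ and $\theta=\beta$. Since $R$ is perpendicular to $e_1,e_2$ (both being geodesic rays from $v$), the Neumann condition satisfied by $u$ gives $Ru=\partial_\theta u\equiv 0$ on $e_1\cup e_2$.

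Next I would invoke Corollary \ref{cantbothvanish} to conclude $a_0(u,v)\neq 0$, and rule out the degenerate case that $a_n(u,v)=0$ for every $n\geq 1$: such vanishing would make $u$ radial about $v$ throughout $T$ by real-analytic continuation, and then the Neumann condition on $e_3$ would force $u$ to be constant, contradicting $\mu>0$. Thus there is a smallest integer $m\geq 2$ with $a_m\neq 0$, and differentiating (\ref{neumannexpansion}) in $\theta$ gives
\[
Ru(r,\theta)=-m\nu\, a_m\, r^{m\nu}\,g_{m\nu}(r^2)\sin(m\nu\theta)+O\bigl(r^{\min((m+1)\nu,\,m\nu+2)}\bigr).
\]
The leading angular factor vanishes at $\theta=j\beta/m$ for $j=1,\ldots,m-1$, so applying the implicit function theorem to $r^{-m\nu}Ru$ at each such $(0,j\beta/m)$ yields at least one real-analytic nodal arc $\alpha$ of $Ru$ emanating from $v$ into $T$.

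I would then extend $\alpha$ to its maximal continuation in $\overline{T}$, treating $Ru$ as an eigenfunction on the Neumann reflection extension of $T$. By Proposition \ref{possiblecomponents}, $\alpha$ is either a loop at $v$ or an immersed arc with second endpoint $p\in\partial T$. The favorable case is $p$ in the interior of $e_3$: then $Ru(p)=0$, the Neumann relation $\partial_\nu u(p)=0$, and the linear independence of $R(p)$ and $\nu(p)$ (Lemma \ref{nowhereortho}, since $v$ does not lie on the geodesic carrying $e_3$) together force $\nabla u(p)=0$, yielding the critical point.

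The main obstacle is ruling out the remaining terminations---$p\in (e_1\cup e_2)\setminus\{v\}$ (including the vertices $v_1,v_2$ of $e_3$), or $\alpha$ a loop at $v$. In each case, $\alpha$ together with a (possibly empty) subsegment of $e_1\cup e_2$ cobounds a proper subdomain $\Omega\subsetneq T$ whose entire boundary lies in $\Zcal(Ru)$. Letting $\Lambda$ denote first Dirichlet eigenvalue and passing to a nodal component $\Omega'\subseteq\Omega$ of $Ru$, the restriction $Ru|_{\Omega'}$ is a Dirichlet eigenfunction of $\Omega'$ with eigenvalue $\mu$, so $\mu\geq\Lambda(\Omega')$, and strict domain monotonicity yields $\Lambda(\Omega')>\Lambda(T)$. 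On the other hand, Lemma \ref{mixedineq} together with the strict inequality $\lambda<\Lambda(T)$ (a first Dirichlet eigenfunction has non-vanishing normal derivative on every edge by Hopf's lemma, hence cannot realize the mixed minimum) gives $\mu\leq\lambda<\Lambda(T)$. Chaining, $\mu\leq\lambda<\Lambda(T)<\Lambda(\Omega')\leq\mu$---a contradiction. Hence $\alpha$ must terminate in the interior of $e_3$, as required.
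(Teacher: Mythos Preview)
Your proof is correct and follows essentially the same strategy as the paper: apply the elliptic Killing field $R=\partial_\theta$ centered at $v$, observe that $Ru$ vanishes on both adjacent edges, use $a_1=0$ (with some $a_m\neq 0$, $m\geq 2$) to extract a nodal arc of $Ru$ entering $T$ from $v$, and argue that this arc must terminate in the interior of the opposite edge, where Lemma \ref{nowhereortho} converts the zero of $Ru$ into a critical point of $u$. The only difference is in ruling out the bad terminations: the paper invokes Lemma \ref{neumannloop} directly, while you unfold that step into an explicit Dirichlet-eigenvalue chain $\mu\leq\lambda<\Lambda(T)<\Lambda(\Omega')\leq\mu$; both arguments ultimately rest on Lemma \ref{mixedineq}.
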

\begin{proof}
    Let $m$ be the smallest positive integer such that $a_m(u,v)=0$. Then
    \[u(r,\theta)=a_0g_0(r^2)+a_mr^{m\nu}g_{m\nu}(0)\cos(m\nu\theta)+O(r^{\min\{(m+1)\nu,m\nu+2\}}).\] Using polar coordinates in the Poincar\'e disk, $R:=\partial_{\theta}$ is an elliptic Killing field centered at the origin. Using the Neumann boundary conditions, the edges of $T$ adjacent to $v$ are contained in $\Zcal(Ru)$. Moreover, 
    \[Ru(r,\theta)=-a_mm\nu r^{m\nu}g_{m\nu}(0)\sin(m\nu\theta)+O(r^{\min\{(m+1)\nu,m\nu+2\}}).\] Since $m\geq 2$, $v$ is an endpoint of an arc in $\Zcal(Ru)\cap T$. By Lemma \ref{neumannloop}, the other endpoint of this arc is not contained in the closure of either edge adjacent to $v$. Thus, the other endpoint of this arc lies in the interior of the edge opposite to $v$. By Lemma \ref{nowhereortho}, this endpoint is thus a critical point of $u$. 
\end{proof}

\begin{lem}\label{zeroatacute}
    Suppose that $u$ is a second Neumann eigenfunction and that $T$ is not acute. If $u$ vanishes at an acute vertex $v$ of $T$, then $u$ has at least one non-vertex critical point.
\end{lem}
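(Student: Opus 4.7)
The plan is to first pin down the topological structure of the nodal set of $u$, then extract the desired critical point either directly from that structure or via a contradiction argument.

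Since $u(v)=0$ at the acute vertex $v$, the coefficient $a_{0}(u,v)$ in (\ref{neumannexpansion}) vanishes, so by Corollary \ref{cantbothvanish} we must have $a_{1}(u,v)\neq 0$. Lemma \ref{arcsnearvertex} (acute case with $a_{0}=0$, $a_{1}\neq 0$) then gives a unique nodal arc $\gamma$ emanating from $v$, and by Corollary \ref{nodalsimplearc} this is the entire nodal set $\Zcal(u)\cap\overline{T}$, with the other endpoint $q$ lying in the interior of the edge $e$ opposite to $v$.

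The first case is when $\gamma$ meets $e$ at $q$ at an angle different from $\pi/2$. Extending $u$ by even reflection across the Neumann edge $e$ produces a real-analytic eigenfunction $\overline{u}$ whose nodal set near $q$ contains both $\gamma$ and its reflection $\sigma(\gamma)$, which have distinct tangent directions at $q$. This nodal configuration can occur only if $q$ is a nodal critical point of $\overline{u}$ (Lemma \ref{nodalcritical}), whence $\nabla\overline{u}(q)=0$ and hence $\nabla u(q)=0$, giving the desired non-vertex critical point. So we may assume from now on that $\gamma$ meets $e$ orthogonally at $q$.

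Now suppose for contradiction that $u$ has no non-vertex critical points. Then $u|_{e_{i}}$ is monotonic on each edge, so the local extrema of $u|_{\partial T}$ lie only at vertices. The function $u|_{\partial T}$ is not constant (otherwise, the Cauchy data $u|_{\partial T}=\text{const}$, $\partial_{\nu}u=0$ would force $u$ constant by unique continuation), so it has an even and positive number of local extrema. Near $v$, the expansion $u\sim a_{1}r^{\nu}\cos(\nu\theta)$ with $\cos 0=1$ and $\cos(\nu\beta)=\cos\pi=-1$ shows that $u$ has opposite signs on the two edges adjacent to $v$; hence $v$ is not a local extremum of $u|_{\partial T}$, so the two extrema are located at the non-acute vertex $v_{*}$ and at the other acute vertex $v'$. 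Under the no-critical-point assumption, the global max and min of $u$ on $\overline{T}$ are forced to vertices (an interior extremum would be a critical point; an edge-interior extremum combined with Neumann would also be critical), so we may take $v_{*}$ as the global maximum and $v'$ as the global minimum. By Lemma \ref{zerocoeffthencp}, $a_{1}(u,v_{*})\neq 0$, else $u$ would have a critical point on the edge opposite $v_{*}$. If $v_{*}$ is strictly obtuse, Lemma \ref{vertexmax} says $v_{*}$ is a local extremum only when $a_{1}(u,v_{*})=0$; this contradicts $v_{*}$ being the global maximum and completes the proof in the strictly obtuse case.

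The main obstacle is the right-angled case $\beta_{*}=\pi/2$, where Lemma \ref{vertexmax} does not apply directly: the expansion (\ref{neumannexpansion}) at $v_{*}$ gives $u-u(v_{*})=(a_{0}g_{0}'(0)+a_{1}\cos 2\theta)r^{2}+O(r^{4})$, so $v_{*}$ may be a local maximum of $u$ even with $a_{1}(u,v_{*})\neq 0$, provided $|a_{1}|\leq |a_{0}g_{0}'(0)|$. To handle this, reflect $T$ across $e$ to obtain a hyperbolic triangle $\Omega$ with vertex angles $\beta,2\beta',\beta$, in which $v_{*}$ becomes a smooth boundary point of the edge $e_{1}\cup\sigma(e_{1})$. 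Writing the extended eigenfunction $\overline{u}$ in Cartesian coordinates at $v_{*}$ yields $\overline{u}\approx a_{0}+(a_{0}g_{0}'(0)+a_{1})x^{2}+(a_{0}g_{0}'(0)-a_{1})y^{2}$ and hence $\nabla\overline{u}(v_{*})=0$, so $v_{*}$ is a non-vertex critical point of $\overline{u}$ on $\Omega$. Using the two-nodal-domain structure of $\overline{u}$ on $\Omega$ together with orthogonality to constants, one verifies via Courant that $\overline{u}$ is a second Neumann eigenfunction of $\Omega$ (or, if not, one argues using the first mixed eigenfunction inequality coming from Lemma \ref{mixedineq}); applying an analogue of Lemma \ref{Nintimpliesext} to this interior-or-edge critical point at $v_{*}$ then produces a critical point on the edge $e_{2}$ of $\Omega$, which is also an edge of $T$, giving the desired contradiction.
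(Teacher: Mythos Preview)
Your treatment of the obtuse case is correct and essentially matches the paper's argument: if there are no non-vertex critical points, the obtuse vertex $v_{*}$ must be a local extremum of $u$, forcing $a_{1}(u,v_{*})=0$ via Lemma~\ref{vertexmax}, and Lemma~\ref{zerocoeffthencp} then supplies a critical point. (Your preliminary case split on whether the nodal arc meets $e$ orthogonally is harmless but unnecessary; the contradiction argument works regardless.)

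The right-triangle case, however, has a genuine gap. After reflecting across $e$ to form $\Omega$, you need $\overline{u}$ to be a \emph{second} Neumann eigenfunction of $\Omega$ in order to invoke Lemma~\ref{neumannloop} (or any analogue of Lemma~\ref{Nintimpliesext}) there. The Neumann spectrum of $\Omega$ splits by parity into the Neumann spectrum of $T$ and the mixed spectrum of $T$ with Dirichlet on the single edge $e$, so $\mu_{2}(\Omega)=\min\bigl(\mu_{2}(T),\lambda_{1}^{e}(T)\bigr)$. Courant's theorem only tells you $\overline{u}$ has at most two nodal domains, not that its eigenvalue equals $\mu_{2}(\Omega)$; and Lemma~\ref{mixedineq} compares $\mu_{2}(T)$ with the mixed problem having Dirichlet on \emph{two} edges, not one, so it does not yield $\mu_{2}(T)\le\lambda_{1}^{e}(T)$. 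Without that inequality $\overline{u}$ may well be a higher Neumann eigenfunction of $\Omega$, and then none of the nodal-arc lemmas of Section~\ref{nodalderivsection} are available on $\Omega$. Even granting the second-eigenfunction status, the appeal to ``an analogue of Lemma~\ref{Nintimpliesext}'' is too vague: $v_{*}$ is an \emph{edge} critical point of $\overline{u}$, so you would still have to run an argument of the type in Lemma~\ref{rightifanythenhyp} on $\Omega$.

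The paper handles the right-triangle case directly on $T$, with no reflection. Take $L$ loxodromic with axis containing the leg of $T$ adjacent to $v$; then $L$ is orthogonal to the other leg, so $Lu$ vanishes identically on that leg and satisfies Neumann conditions on the leg through $v$. Since $u(v)=0$ and $a_{1}(u,v)\neq 0$, Lemma~\ref{derivativeneumannvertex} produces an arc of $\Zcal(Lu)\cap T$ emanating from $v$; Lemma~\ref{neumannloop} forces its other endpoint onto the interior of the hypotenuse, where Lemma~\ref{nowhereortho} identifies it as a critical point of $u$.
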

\begin{proof}
    As in the proof of Lemma \ref{morse}, the restriction $u|_{\partial T}$ has an even number of local extrema. Since $\overline{T}$ is compact and $u$ is continuous, this number is at least two. If $u$ vanishes at an acute vertex, then either $u$ has a non-vertex critical point or both of the other vertices of $T$ are local extrema of $u$. In the first case we are done.\\
    \indent If $T$ is obtuse and the obtuse vertex $v'$ of $T$ is a local extremum of $u$, then by Lemma \ref{vertexmax}, we have $a_1(u,v')=0$, so Lemma \ref{zerocoeffthencp} implies that $u$ has a non-vertex critical point.\\
    \indent Suppose that $T$ is a right triangle. Let $L$ be a loxodromic Killing field whose axis contains the leg of $T$ adjacent to $v$. Then $L$ is orthogonal to the other leg of $T$. Thus, the leg opposite to $v$ is contained in $\Zcal(Lu)$, and $Lu$ satisfies Neumann conditions on the leg adjacent to $v$. By Lemma \ref{derivativeneumannvertex}, $v$ is an endpoint of an arc in $\Zcal(Lu)\cap T$. By Lemma \ref{neumannloop}, the other endpoint of this arc must be in the interior of the hypotenuse of $T$, and by Lemma \ref{nowhereortho}, this endpoint is a critical point of $u$. 
\end{proof}

\begin{lem}\label{rightifanythenhyp}
    If $u$ is a second Neumann eigenfunction and $T$ is a right triangle, and $u$ has a critical point, then $u$ has at least one critical point on the hypotenuse of $T$.
\end{lem}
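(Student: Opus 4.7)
The plan is to induct on the location of a given critical point. If $u$ has an interior critical point then Lemma~\ref{Nintimpliesext} immediately supplies a critical point on each edge, including the hypotenuse, and we are done. Assuming instead that all critical points of $u$ lie on $\partial T$ and that none lies on the hypotenuse, I may take without loss of generality a critical point $p$ in the interior of a leg $AB$, where $A$ denotes the right-angle vertex. The strategy is to produce a critical point on $BC$ using a loxodromic Killing field whose axis is the leg containing $p$.

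Let $L$ be a loxodromic Killing field with axis $AB$. Because $AB\perp AC$ at $A$, the always-or-nowhere dichotomy used in the proof of Lemma~\ref{nowhereortho} gives that $L$ is orthogonal to $AC$ at every point of $AC$, so $AC\subseteq \Zcal(Lu)$; the usual reflection symmetry gives $\partial_\nu(Lu)\equiv 0$ on $AB$. By Lemma~\ref{arcsnearcp}, $p$ is the endpoint of an arc $\alpha\subseteq \Zcal(Lu)\cap T$, whose other endpoint $q$ lies in $\partial T$. The goal is to rule out every location for $q$ except the interior of $BC$; then Lemma~\ref{nowhereortho} applied to the geodesics $AB$ and $BC$ (which meet non-orthogonally since $\beta_B<\pi/2$) together with the Neumann condition on $BC$ forces $\nabla u(q)=0$, producing the desired critical point.

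The case $q\in\mathrm{int}(AB)$ is excluded by Lemma~\ref{neumannloop} combined with Lemma~\ref{mixedineq}, exactly as in the proof of Lemma~\ref{zeroatacute}: the forced conclusion that $Lu\equiv 0$ on $AC\cup BC$ would either make $Lu$ a first mixed eigenfunction vanishing at $p$ on its Neumann edge (impossible by definite sign) or force $Lu\equiv 0$ on $T$, in which case $u$ would depend only on the distance to $AB$ and violate the Neumann condition on $BC$ (as in the proof of Theorem~\ref{finite}). The cases $q\in\mathrm{int}(AC)$, $q=A$, and $q=B$ are all handled by a single test-function argument: in each case $\alpha$ together with a portion of $\partial T$ bounds a region $\Omega'\subsetneq T$, and I set $\psi = Lu\cdot\chi_{\Omega'}$. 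This $\psi$ lies in $H^1(T)$ because $Lu=0$ on both $\alpha$ and the portion of $\partial\Omega'$ lying in $AC$; integration by parts (using $\partial_\nu(Lu)\equiv 0$ on the portion of $\partial\Omega'$ lying in $AB$) yields
\[
\int_T |\Gm\psi|^2_{-1} = \mu_2 \int_T \psi^2.
\]
Thus $\psi$ achieves Rayleigh quotient $\mu_2$ for a mixed problem on $T$ whose Dirichlet set strictly contains $AC\cup BC$; by domain monotonicity of mixed eigenvalues this problem's first eigenvalue is at least $\lambda_1^{AB}$ (the first mixed eigenvalue with Neumann on all of $AB$ and Dirichlet elsewhere), while Lemma~\ref{mixedineq} gives $\mu_2\le \lambda_1^{AB}$, forcing equality throughout. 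Hence $\psi$ is a first mixed eigenfunction, which must have definite sign and be nonzero in the interior of $T$; this contradicts $\psi\equiv 0$ on the nonempty open set $T\setminus\overline{\Omega'}$. Finally, $q=C$ is excluded directly from expansion~(\ref{neumannexpansion}) at $C$: since $CA$ is the perpendicular from $C$ to the axis $AB$, the equidistant curve to $AB$ through $C$ meets $CA$ orthogonally, so in polar coordinates at $C$ with $\theta=0$ along $CA$ the field $L$ makes angle $\alpha_C=\pi/2$, and the same leading-order calculation that underlies Lemma~\ref{derivativeneumannvertex} shows that the $r$-expansion of $Lu$ has no angular zeros inside the sector $(0,\beta_C)$, whether $a_0(u,C)=0$ or $a_0(u,C)\neq 0$. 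The main obstacle is the test-function step: one must carefully identify which portions of $\partial\Omega'$ carry Dirichlet versus Neumann data for $\psi$, verify that every boundary contribution in the integration-by-parts vanishes, and then combine Lemma~\ref{mixedineq} with domain monotonicity to promote the inequality into an equality that forces the first-eigenfunction contradiction.
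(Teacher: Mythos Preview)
Your argument is correct and follows the same route as the paper: reduce to a critical point $p$ on a leg, take a loxodromic Killing field $L$ with axis that leg, and trace an arc in $\Zcal(Lu)$ from $p$ to the hypotenuse. The paper's proof is considerably shorter because it makes one observation you are circling around but never use: since $AC\subseteq\Zcal(Lu)$, any arc in $\Zcal(Lu)$ ending anywhere on $\overline{AC}$ (including at $A$ or at $C$) can be concatenated with a portion of $AC$ to produce an arc with both endpoints in the closed edge $\overline{AB}$, and Lemma~\ref{neumannloop} forbids this in a single stroke. Your separate test-function treatments of $q\in\mathrm{int}(AC)$, $q=A$, $q=B$ are just re-derivations of that lemma's proof, and your vertex-expansion argument at $C$ is correct but unnecessary---the concatenation trick covers it too. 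The exclusion clause of Lemma~\ref{neumannloop} (namely $Lu\equiv 0$ on $AC\cup BC$) is dispatched more quickly than you do it: $Lu\equiv 0$ on $BC$ together with Lemma~\ref{nowhereortho} and the Neumann condition would make every point of $BC$ a critical point of $u$, contradicting Theorem~\ref{finite}.
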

\begin{proof}
    By Lemma \ref{Nintimpliesext}, if $u$ has an interior critical point, then we are done. Suppose that $u$ has a critical point in a leg of $T$. Let $L$ be a loxodromic Killing field whose axis contains this leg. Then $Lu$ satisfies Neumann conditions on this leg and Dirichlet conditions on the other leg of $T$. By Lemma \ref{arcsnearcp}, the critical point is an endpoint of an arc in $\Zcal(Lu)$. By Lemma \ref{neumannloop}, the other endpoint of this arc must be in the interior of the hypotenuse. By Lemma \ref{nowhereortho}, this endpoint is a critical point of $u$. 
\end{proof}

Combining Lemmas \ref{zeroatacute} and \ref{rightifanythenhyp}, we get

\begin{coro}\label{rightexactlyone}
    Let $T$ be a right triangle with a second Neumann eigenfunction $u$. If $u$ vanishes at an acute vertex $v$ of $T$, then $u$ has a critical point on the hypotenuse of $T$.
\end{coro}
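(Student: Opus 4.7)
The plan is to simply chain together Lemmas \ref{zeroatacute} and \ref{rightifanythenhyp}, since the hypotheses of the corollary exactly match the hypothesis of the first and the conclusion of the first feeds into the hypothesis of the second.

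First, since $T$ is a right triangle, it has a vertex of angle $\pi/2$, so $T$ is in particular not acute. Combined with the hypothesis that $u$ vanishes at an acute vertex $v$, Lemma \ref{zeroatacute} applies directly, and it guarantees the existence of at least one non-vertex critical point of $u$ somewhere in $\overline{T}$.

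Second, now that we know $u$ has a (non-vertex) critical point and $T$ is a right triangle, the hypotheses of Lemma \ref{rightifanythenhyp} are met. That lemma immediately produces a critical point on the hypotenuse of $T$, which is the desired conclusion.

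There is no real obstacle here; the only thing worth double-checking is that the notion of ``critical point'' used in both lemmas agrees (non-vertex critical point in $\overline{T}$, as set in Section \ref{nodalsection}), which it does. Thus the corollary follows by the two-step deduction above.
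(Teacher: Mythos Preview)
Your proof is correct and is exactly the argument the paper intends: the text immediately preceding the corollary states ``Combining Lemmas \ref{zeroatacute} and \ref{rightifanythenhyp}, we get'' and gives no further proof. Your two-step chaining of these lemmas is precisely that combination.
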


\begin{lem}\label{twoimpliestwoedges}
    Suppose that $u$ is a second Neumann eigenfunction. If $u$ has at least two critical points, then $u$ has critical points in at least two distinct edges of $T$.
\end{lem}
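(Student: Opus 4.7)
I argue by contradiction. Suppose $u$ has at least two critical points but all of them lie in a single edge $e$ of $T$. By Lemma \ref{Nintimpliesext} none of these critical points are interior (otherwise every edge would contain one), and by Lemma \ref{degenimpliesnondegen} every critical point in $e$ must be non-degenerate. Fix two distinct critical points $p_1, p_2 \in e$ and let $L_e$ be a loxodromic Killing field with axis containing $e$. By Lemma \ref{arcsnearcp}, each $p_i$ is an endpoint of an arc $\alpha_i \subseteq \Zcal(L_e u) \cap T$.

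The first substantive step is to locate the other endpoints of the $\alpha_i$. By Proposition \ref{possiblecomponents} each $\alpha_i$ terminates on $\partial T$. Lemma \ref{neumannloop} rules out both endpoints lying in $\overline{e}$ unless $L_e u$ vanishes identically on $\partial T \setminus e$; by Lemma \ref{nowhereortho} this would force \emph{both} edges adjacent to $e$ to meet $e$ orthogonally, which is impossible in $\kappa<0$ by the angle-sum constraint for hyperbolic triangles. If the other endpoint of $\alpha_i$ lay in the interior of an adjacent edge $e'$ not perpendicular to $e$, Lemma \ref{nowhereortho} would make that endpoint a critical point of $u$ outside $e$, again a contradiction. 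Hence each $\alpha_i$ terminates either at the vertex $v_*$ opposite to $e$ or on an edge perpendicular to $e$ (which requires an endpoint of $e$ to be a right-angled vertex of $T$).

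Next, because $L_e$ is tangent to $e$ and $u$ is Neumann, $L_e u$ vanishes on $e$ precisely at critical points of $u$; in particular $L_e u$ has constant sign on the open segment $(p_1,p_2)\subseteq e$. Let $\Omega$ be the connected component of $T\setminus \Zcal(L_e u)$ whose boundary contains $(p_1,p_2)$. By the preceding analysis, the portions of $\partial\Omega$ lying in $\partial T\setminus e$ are contained in edges perpendicular to $e$, on which $L_e u\equiv 0$. The test function $\phi:=L_e u\cdot \chi_{\Omega}$ (extended by zero) is continuous across the nodal arcs bounding $\Omega$ in $T$, and its trace vanishes on $D:=\partial T\setminus e$, so $\phi\in H^1_D(T)$. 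Integrating by parts over $\Omega$, every boundary contribution dies: $L_e u=0$ on the nodal arcs and on perpendicular edges, and $\partial_\nu L_e u = 0$ on $(p_1,p_2)\subseteq e$ because $L_e u$ extends evenly across $e$. This gives
\[
\int_T |\Gk \phi|_\kappa^2 \;=\; \mu\int_T \phi^2.
\]

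By the variational characterization \eqref{mixedminmax} and Lemma \ref{mixedineq}, $\phi$ realizes the minimum, so it is a first mixed eigenfunction on $T$ with Neumann on $e$ and Dirichlet on $D$. But first mixed eigenfunctions are strictly of one sign in $T$, whereas $\phi\equiv 0$ on the nonempty open set $T\setminus\overline{\Omega}$ (we cannot have $\Omega=T$, since $\alpha_1,\alpha_2$ are not entire edges), a contradiction. The main obstacle is the case analysis of where the arcs $\alpha_i$ may terminate; it is resolved cleanly because on every admissible terminating edge $L_e u$ vanishes identically, so the Dirichlet data required for the test function is automatic and the Rayleigh quotient computation is unaffected by the presence of right angles.
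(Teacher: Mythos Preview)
Your overall strategy is sound and ultimately reaches the same contradiction as the paper, but it is considerably more involved than necessary and contains an insufficiently justified step. The sentence ``By the preceding analysis, the portions of $\partial\Omega$ lying in $\partial T\setminus e$ are contained in edges perpendicular to $e$'' does not follow merely from knowing where $\alpha_1,\alpha_2$ terminate. A nodal domain of $L_e u$ can touch a non-perpendicular edge $e'$ (where $L_e u$ has constant nonzero sign) simply because $\Omega\subseteq T$ and $e'\subseteq\partial T$; to exclude this you must argue that $\alpha_1,\alpha_2$ (together with any perpendicular edge) actually separate a neighborhood of $(p_1,p_2)$ from $e'$ inside $T$. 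This separation argument is missing, though it can be supplied with a short Jordan-curve style argument once one notes (via Lemma~\ref{neumannloop}) that the arcs cannot cross in $T$.

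The paper's proof avoids this detour entirely. It observes that if the two arcs $\alpha_1,\alpha_2$ emanating from $p_1,p_2$ fail to land in the interior of another edge, then one can produce a connected piece of $\Zcal(L_e u)$ with both endpoints in $\overline{e}$ (either because both arcs meet at $v_*$, so their concatenation runs from $p_1$ to $p_2$; or because an arc lands on a perpendicular edge $e''\subseteq\Zcal(L_e u)$ and can be continued along $e''$ to the vertex $e\cap e''\in\overline{e}$). Lemma~\ref{neumannloop} then gives the contradiction directly, and the endpoint on a non-perpendicular edge is a critical point by Lemma~\ref{nowhereortho}. No nodal-domain or test-function construction is needed, so the invocation of Lemma~\ref{degenimpliesnondegen} is also unnecessary.
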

\begin{proof}
    If $u$ has an interior critical point, then we are done by Lemma \ref{Nintimpliesext}. Otherwise, suppose that $u$ has two critical points in the same edge $e$ of $T$. Let $L$ be a loxodromic Killing field whose axis contains $e$. By Lemma \ref{arcsnearcp}, each critical point in $e$ is an endpoint of an arc in $\Zcal(Lu)$. By Lemma \ref{neumannloop}, these arcs cannot intersect each other or themselves. Thus, at least one of these arcs has an endpoint in the interior of another edge of $T$. By Lemma \ref{nowhereortho}, this endpoint is a critical point of $u$. 
\end{proof}

\section{Proof of Theorem \ref{mixed}}\label{mixedsection}
We now prove Theorem \ref{mixed}, beginning with the case where the Dirichlet region equals two edges of $T$. The proof in this case applies equally as well for positive or negative curvature triangles, so we give the proof only for negatively curved triangles. By rescaling the metric, it suffices to prove the theorem for hyperbolic triangles. Let $T\subseteq \Mm$ be a geodesic triangle, and let $D\subseteq T$ equal the union of two edges of $T$ such that each mixed vertex has interior angle at most $\pi/2$. Let $u$ be a first mixed eigenfunction. In this case, the proof is nearly identical to the Euclidean case (see Proposition 7.1 of \cite{me}). 
\begin{proof}[Proof of Theorem \ref{mixed}, Part 1]
    \indent Let $L$ be the loxodromic vector field whose axis contains the shortest geodesic segment $\gamma$ joining the Dirichlet vertex to the geodesic containing the Neumann edge. This axis is orthogonal to the geodesic containing the Neumann edge, so the Neumann edge is contained in $\Zcal(Lu)$. Assume for a moment that both mixed vertices are acute. Then $T$ contains the segment $\gamma$, and it follows from Lemma \ref{derivativedirichletvertex} that no arc in $\Zcal(Lu)$ has an endpoint at the Dirichlet vertex. By Lemma \ref{derivativemixedvertex}, the only arc in $\Zcal(Lu)$ with an endpoint at a mixed vertex is the Neumann edge. By Lemma \ref{nodircp}, no arc in $\Zcal(Lu)$ has an endpoint in the interior of a Dirichlet edge. Thus, replacing $L$ with $-L$ if necessary, we have $Lu>0$ in $T$. If $T$ has a right-angled mixed vertex, then the leg of $T$ orthogonal to the Neumann edge is contained in $\Zcal(Lu)$. In this case, it follows from Lemmas \ref{nodircp} and \ref{neumannloop} that $\Zcal(Lu)$ does not intersect $T$. \\
   \indent Let $L_e$ be a loxodromic Killing field whose axis is the Neumann edge $e$. Each critical point of $u$ on the Neumann edge is an endoint of an arc in $\Zcal(L_eu)$ that intersects $T$, and by Lemma \ref{neumannloop}, these arcs do not intersect each other. By Lemma \ref{derivativedirichletvertex}, only one of these arcs can have an endpoint at the Dirichlet vertex. By Lemma \ref{derivativemixedvertex}, neither mixed vertex is an endpoint of an arc in $\Zcal(L_eu)$. By Lemma \ref{nodircp}, arcs in $\Zcal(L_eu)$ emanating from critical points in the Neumann edge do not have endpoints in the interiors of the Dirichlet edges. Thus, $u$ has at most one critical point in the Neumann edge. Since the restriction of $u$ to the Neumann edge must have a maximum and $u$ is Neumann on this edge, $u$ must have a critical point in the Neumann edge.
\end{proof}

We now prove the first half of the theorem. Let $T_{-1}\subseteq\Mm$ be a geodesic triangle with $D$ a single edge of $T_{-1}$ such that the vertex opposite to $D_1$ has angle less than $\pi/2$. As discussed in Section \ref{geometry}, we may represent $T_{-1}$ as a Euclidean triangle in the unit disk with the Klein metric. Moreover, we may suppose that the Neumann vertex of $T_{-1}$ is located at the origin. Let $T$ be the fixed subset of $\Rbb^2$ that represents $T_{-1}$ in this metric. For $\kappa\in [-1,0]$, let $\Tk$ be the geodesic triangle in $\Mk$ obtained by imposing the Klein metric (\ref{kleinmetric}) with curvature $\kappa$ onto $T$. Since the Klein metric is conformal at the origin, $\Tk$ has an acute-angled Neumann vertex for all $\kappa\in [-1,0]$. As discussed in Section \ref{perttheory}, the simplicity of the first mixed eigenvalue allows us to associate to this path of triangles an analytic path of first mixed eigenfunctions $u_{\kappa}\geq0$ and eigenvalues $\lambda_{\kappa}$. In what follows, let $e$ and $e'$ denote the Neumann edges of $T$, and let $e''$ denote the Dirichlet edge. \\
\indent Our proof of the first half of Theorem \ref{mixed} relies on the fact that the Euclidean version is already known to hold by Proposition 7.2 of \cite{me}. We are currently unable to give a direct proof as in \cite{me} because the hyperbolic version of the boundary integral formula in Lemma 4.2 of \cite{me} does not take such a simple form as in the Euclidean case. Instead, we prove that critical points on first mixed eigenfunctions of hyperbolic triangles have enough stability with respect to $\kappa$ such that the Euclidean case implies the hyperbolic case of the theorem.

\begin{prop}\label{equaledges}
   Suppose that $T$ has an acute Neumann vertex, and suppose that for some $\kappa\in[-1,0]$, $u_{\kappa}$ has a critical point in a Neumann edge. Then $u_{\kappa}$ has a critical point in a Neumann edge that is stable under perturbation.  
\end{prop}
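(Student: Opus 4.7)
The plan is to apply Lemma \ref{stablecp} with the loxodromic Killing field $X = L_e$ whose axis contains the edge $e$ of $T$ on which the given critical point $p$ lies. First one checks that $e \not\subseteq \Zcal(L_e u_\kappa)$: otherwise $L_e u_\kappa|_e \equiv 0$ would force $u_\kappa$ to be constant along $e$, hence identically zero on $e$ (since $u_\kappa$ vanishes at the mixed vertex endpoint of $e$), contradicting the positivity of $u_\kappa$ in $T$ via Hopf's lemma. By Lemma \ref{arcsnearcp}, $p$ is the endpoint of at least one arc in $\Zcal(L_e u_\kappa) \cap T$, so by Lemma \ref{stablecp} it suffices to show that $p$ is the endpoint of \emph{exactly one} such arc.

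To do this, expand $u_\kappa$ near $p$ in Fermi coordinates $(x,s)$ relative to $e$ (with $x$ the signed normal distance to $e$ and $s$ the signed arc length along $e$). The Neumann reflection across $e$ makes $u_\kappa$ even in $x$, yielding
\[
u_\kappa = u_\kappa(p) + a x^2 + b s^2 + O(3).
\]
Since $L_e$ is tangent to $e$ with constant hyperbolic norm there, its Fermi expansion has the form $L_e = (1 + x\,\alpha)\,\partial_s + x\,\beta\,\partial_x$ for analytic $\alpha,\beta$. A direct computation then shows that the linear-order part of $L_e u_\kappa$ at $p$ is $2bs$. Hence whenever $b \neq 0$, the implicit function theorem yields a single smooth arc of $\Zcal(L_e u_\kappa)$ through $p$ transverse to $e$, and Lemma \ref{stablecp} gives stability of $p$.

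The remaining case $b = 0$ makes $p$ a nodal critical point of $L_e u_\kappa$, and Lemma \ref{nodalcritical} produces several arcs of $\Zcal(L_e u_\kappa)$ meeting at $p$ at equal angles. Lemma \ref{neumannloop} rules out any such arc returning to $e$ or forming a loop. Evaluating the angular conditions of Lemmas \ref{derivativeneumannvertex} and \ref{derivativemixedvertex} at the two vertices adjacent to $e$—using that $L_e$ is tangent to $e$ at each of them—shows that neither vertex is an endpoint of such an arc. Moreover, any point of $\Zcal(L_e u_\kappa)$ in the interior of the Dirichlet edge $e''$ lies where $L_e$ is tangent to $e''$, and at such points $u_\kappa$ is not critical (by Hopf's lemma, $\nabla u_\kappa$ is a nonzero vector normal to $e''$ there). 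Consequently, at least one arc emanating from $p$ must terminate in the interior of the other Neumann edge $e'$, at some point $q$. Since the Neumann vertex angle is acute, $e$ and $e'$ are not orthogonal; Lemma \ref{nowhereortho} then says $L_e$ is nowhere orthogonal to $e'$. Combined with the Neumann condition $\nabla u_\kappa(q) \parallel e'$, the relation $L_e u_\kappa(q) = 0$ forces $\nabla u_\kappa(q) = 0$, so $q$ is a critical point of $u_\kappa$ on the Neumann edge $e'$.

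One then iterates at $q$: either $q$ satisfies the non-degeneracy analogue $b \neq 0$ in its own local expansion (hence is stable by the second paragraph), or the same argument produces yet another critical point on a Neumann edge. By Theorem \ref{finite}, $u_\kappa$ has only finitely many critical points, so the process must terminate at a stable one. The main obstacle I foresee is ensuring the iteration does not cycle through a fixed finite collection of degenerate critical points without ever reaching a non-degenerate one. I anticipate handling this by tracking, within the planar graph $(\Zcal(L_e u_\kappa) \cup \Zcal(L_{e'} u_\kappa)) \cap \overline{T}$, the number of arc endpoints contributed by each edge and invoking Lemma \ref{neumannloop} across both Killing fields to rule out the relevant closed configurations, thereby forcing strict progress at each iteration.
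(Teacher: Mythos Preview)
Your $b\neq 0$ case is correct and in fact more direct than what the paper does there.  The $b=0$ case, however, has genuine gaps, and your proposed iteration is both unnecessary and incomplete.

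The sentence about the Dirichlet edge is a non sequitur: you correctly note that $L_e u_\kappa$ vanishes on $e''$ only where $L_e$ is tangent to $e''$, but then observe that $u_\kappa$ is not critical at such points.  That is irrelevant---the question is whether an \emph{arc} of $\Zcal(L_e u_\kappa)$ can terminate there, not whether $u_\kappa$ is critical there.  You also omit the mixed vertex opposite $e$ entirely; by Lemma~\ref{derivativemixedvertex} at most one arc can end there, and this must enter the count.

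More importantly, once you have located a critical point $q\in e'$ as the endpoint of an arc of $\Zcal(L_e u_\kappa)$, there is no reason to switch to $L_{e'}$ and iterate.  Lemma~\ref{stablecp} allows \emph{any} Killing field $X$, so you may apply it at $q$ with $X=L_e$ itself: the Neumann vertex is acute, hence $L_e$ is nowhere orthogonal to $e'$ (Lemma~\ref{nowhereortho}), so $e'\not\subseteq\Zcal(L_e u_\kappa)$; and if $q$ is a degree-one endpoint of the arc you already found, $q$ is stable.  No iteration is needed.

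The paper's route to a degree-one endpoint on $e'$ is to always produce \emph{two} arcs of $\Zcal(L_e u_\kappa)\cap T$ rooted in $e$.  In your $b=0$ case this is automatic.  In your $b\neq 0$ case the paper makes the observation you are missing: if every critical point on $e$ is a local extremum of $u_\kappa|_e$, there must be at least two of them, since the acute Neumann vertex is a local maximum of $u_\kappa|_e$ while $u_\kappa$ vanishes at the mixed endpoint of $e$, and a single interior extremum cannot interpolate between these.  With two arcs in hand, Lemma~\ref{neumannloop} sends both away from $\overline e$ and at most one to the opposite mixed vertex, forcing a degree-one endpoint in the interior of $e'$.  Your proposed cycle-breaking argument in the combined graph $\Zcal(L_e u)\cup\Zcal(L_{e'} u)$ is therefore unnecessary, and as you yourself note, you have not actually carried it out.
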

\begin{proof}
    Suppose that $e$ is a Neumann edge of $T_{\kappa}$ containing a critical point of $u_{\kappa}$. Then by Lemma \ref{arcsnearcp}, each critical point on $e$ is an endpoint of at least one arc in $\Zcal (L_eu)$. If $e$ contains a critical point that is not a local extremum of $u|_e$, then by the Taylor expansion (\ref{taylorexp}), this critical point is the endpoint of at least two arcs in $\Zcal(L_eu)$. Otherwise, since the Neumann vertex is a local extremum of $u_{\kappa}$ and $u_{\kappa}$ vanishes at the other vertex of $e$, there are at least two critical points on $e$. Thus, in either case, there exist at least two arcs in $\Zcal(L_eu)\cap T$ with endpoints in $e$.\\
    \indent By Lemma \ref{neumannloop}, none of these arcs have another endpoint in the closure of $e$. By \ref{derivativeneumannvertex}, at most one of these arcs has an endpoint at the mixed vertex opposite to $e$. Thus, at least one of these arcs has a degree one endpoint in the interior of the other Neumann edge. By Lemma \ref{stablecp}, this endpoint is a critical point of $u_{\kappa}$ that is stable under perturbation. 
\end{proof}

\begin{proof}[Proof of Theorem \ref{mixed}, Part 2]
    As mentioned above, we begin by showing that in the case where $T$ has a single Dirichlet edge and Neumann vertex of angle less than $\pi/2$, the critical set is empty. By Lemmas \ref{nodircp} and \ref{mixedintimpliesext}, it suffices to prove that $u_{-1}$ has no critical points in the Neumann edges of $T_{-1}$. Let $S$ be the set of $\kappa\in [-1,0]$ such that $u_{\kappa}$ has a critical point on either $e$ or $e'$. We will show that $S$ is both open and closed in $[-1,0]$. Since $0\notin S$ by Proposition 7.2 of \cite{me}, this will imply that $S=\emptyset$.\\
    \indent The second statement in Proposition \ref{equaledges} implies that $S$ is open. To see that $S$ is closed, let $\{\kappa_n\}$ be a sequence in $S$ that converges to some $\kappa\in [-1,0]$. By re-labeling the edges and passing to a subsequence if necessary, we may suppose that, for each $n$, $u_{\kappa_n}$ has a critical point $p_n$ in $e$ that converges to some point $p$ in the closure of $e$. By Lemma \ref{convergethenzero}, if $p_n$ converges to the Neumann vertex, then $u_{\kappa}$ vanishes at the Neumann vertex, contradicting that $u_{\kappa}>0$ away from the Dirichlet edge. By Lemma \ref{sequencetomixed}, $p_n$ does not converge to a mixed vertex. Thus, $p$ is a point in the interior of $e$. By continuity, $p$ is a critical point of $u_{\kappa}$, so $\kappa\in S$. We have now established that $u_{-1}$ has no non-vertex critical points. Since the Neumann vertex is a local maximum of $u_{-1}$ by Lemma \ref{vertexmax}, it is also the global maximum.\\
    \indent Now let $L$ be a loxodromic Killing field perpendicular to the Dirichlet edge. Then $Lu_{-1}$ cannot vanish on $e''$ by Lemma \ref{nodircp}. Since $L$ is linearly independent to the outward normal derivative on $e$ and $e'$ and $u_{-1}$ has no critical points on these edges, $Lu_{-1}$ does not vanish on these edges. By Lemma \ref{derivativemixedvertex}, no mixed vertex is an endpoint of an arc in $\Zcal(Lu_{-1})$. By Lemma \ref{neumannloop}, therefore, $Lu$ does not vanish in $T$.\\
    \indent Finally, consider the case in which $T$ has a right angled Neumann vertex. Let $T'$ be the union of $T$ with its reflection over one of its Neumann edges. The extension of a first mixed eigenfunction of $T$ to be an even function on $T'$ yields a first mixed eigenfunction of $T'$ with Dirichlet conditions on two edges. Since the mixed vertices of $T'$ are both acute, the result follows from Part 1 of the proof of the theorem. 
\end{proof}

\section{Proof of Theorem \ref{mainthm}}\label{neumannsection}
We now have all the tools needed to prove Theorem \ref{mainthm}. The overall structure of the proof in the obtuse case is nearly identical to that given in \cite{erratum}. The only significant difference is the path of triangles used. Since we limit to an obtuse triangle rather than a right triangle, the argument is somewhat simpler than in the Euclidean case. For the case of right triangles, the argument is new as far as we are aware, though it is simpler than the obtuse case.\\
\indent As in Part 2 of the proof of Theorem \ref{mixed}, we can take a path of negatively curved obtuse (resp. right) triangles that limit to a Euclidean obtuse (resp. right) triangle by fixing a Euclidean triangle $T$ (with edges $e$, $e'$, and $e''$) in the unit disk in $\Rbb^2$ with an obtuse (resp. right) angle at the origin and then varying the metric on $T$ to get a path of obtuse (resp. right) triangles $\Tk$ with curvature $\kappa\in[-1,0]$. Again, we need only prove the theorem for $\kappa=-1$ since it holds in other negative curvatures by simply rescaling the metric. By Corollary \ref{ctspath}, we may re-parameterize this path to $[0,1]\mapsto T_t$ such that there exists accompanying continuous, piecewise real-analytic paths $\mu_t$ and $u_t$ of second Neumann eigenvalues and eigenfunctions of the Laplace operator $-\Delta_{\kappa(t)}$ for $T_t$ and such that $\kappa(0)=0$ and $\kappa(1)=-1$. By Theorem 4.1 of \cite{erratum}, $u_0$ has no critical points, and it takes its global extrema at the acute vertices of $T_0$.

\begin{lem}\label{exactlyone}
    Let $T$ be a constant negative curvature obtuse triangle with a second Neumann eigenfunction $u$. If $u$ has exactly one critical point, then it is stable under perturbation, and either 
    \begin{enumerate}
        \item the coefficient $a_1(u,v)$ in expansion (\ref{neumannexpansion}) at the obtuse vertex $v$ equals zero, or
        \item the eigenfunction $u$ vanishes at an acute vertex of $T$.
    \end{enumerate}    
\end{lem}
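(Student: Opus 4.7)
The plan is to assemble the lemma from the toolkit built up in Sections \ref{vertexsection} and \ref{implicationsection}: the main content is already present and the proof is essentially a bookkeeping exercise.

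First I would locate the single critical point $p$. Lemma \ref{Nintimpliesext} forbids $p$ from lying in the interior of $T$, for then $u$ would have at least one critical point in the interior of every edge, yielding at least four critical points. Hence $p$ must lie in the interior of some edge $e$ of $T$.

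Next I would establish stability. If $p$ were a degenerate critical point of $u$, then Lemma \ref{degenimpliesnondegen} would produce a second critical point in an edge different from $e$, contradicting uniqueness. Therefore $p$ is non-degenerate, and Lemma \ref{nondegenstable} immediately gives that $p$ is stable under perturbation.

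Finally I would derive the dichotomy. Lemma \ref{morse}, applied to the obtuse triangle $T$ and the second Neumann eigenfunction $u$ having exactly one non-vertex critical point, yields two alternatives: either $u$ vanishes at an acute vertex (which is conclusion (2)), or every vertex of $T$ is a local extremum of $u$. In the latter case the obtuse vertex $v$ of $T$ has angle $\beta\in(\pi/2,\pi)$ and is a local extremum, so the characterization in Lemma \ref{vertexmax} for obtuse vertices forces $a_0(u,v)\neq 0$ and $a_1(u,v)=0$, which is conclusion (1).

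There is no real obstacle here: every ingredient---the ruling out of interior critical points, the non-degeneracy criterion for stability, the Morse-theoretic constraint on obtuse triangles, and the extremum-coefficient dictionary at obtuse vertices---has already been proved. The only point that deserves care is checking that the hypotheses of Lemma \ref{morse} and Lemma \ref{vertexmax} match our setting (in particular that the obtuse vertex is not itself to be counted as a critical point, per the convention adopted in Section \ref{vertexsection}), so the single-critical-point hypothesis really feeds Lemma \ref{morse} verbatim.
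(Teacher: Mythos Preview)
Your proposal is correct and follows essentially the same route as the paper's own proof, which simply cites Lemmas \ref{nondegenstable} and \ref{degenimpliesnondegen} for stability and then invokes Lemmas \ref{morse} and \ref{vertexmax} for the dichotomy. You are in fact slightly more careful than the paper in explicitly invoking Lemma \ref{Nintimpliesext} to rule out an interior location for the unique critical point before applying Lemma \ref{degenimpliesnondegen}, which is stated only for edge critical points.
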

\begin{proof}
    The first statement follows from Lemmas \ref{nondegenstable} and \ref{degenimpliesnondegen}. By Lemma \ref{morse}, if $u$ has exactly one critical point, then either each vertex of $T$ is a local extremum of $u$ or $u$ vanishes at an acute vertex. Lemma \ref{vertexmax} implies that $a_1=0$ if the obtuse vertex is a local extremum of $u$.
\end{proof}

For each $t$, let $N(t)$ denote the number of critical points of $u_{t}$.

\begin{proof}[Proof of Theorem \ref{mainthm}]
    Let $S\subseteq [0,1]$ be the set of $t$ such that $N(t)\geq 1$. We show that $S$ is both open and closed in $[0,1]$. Since $0\notin S$ by Theorem 4.1 of \cite{erratum}, this implies that $S=\emptyset$. By Lemmas \ref{nondegenstable}, \ref{stablecp}, and \ref{degenimpliesnondegen}, $S$ is open.\\
    \indent To show that $S$ is closed, let $\{t_n\}$ be a sequence of points in $S$ converging to a point $t\in[0,1]$.\\
    \indent We first show that $S$ is closed if $T_1$ is a right triangle. By Lemma \ref{rightifanythenhyp}, for each $n$, $u_{t_n}$ has a critical point $p_n$ on the hypotenuse of $T$. By passing to a subsequence if necessary, we may suppose that $\{p_n\}$ converges to some point $p$ in the closure of the hypotenuse. If $p$ is in the interior of the hypotenuse, then $p$ is a critical point of $u_{t}$, so $t\in S$. If $p$ is an acute vertex of $T$, then $u_{t}$ vanishes at this vertex by Lemma \ref{convergethenzero}. Then by Corollary \ref{rightexactlyone}, $u_{t}$ has a critical point in the interior of the hypotenuse, so $t\in S$. This completes the case of right triangles. \\
    \indent Now suppose that $T$ is obtuse. By extracting a subsequence if necessary, we may suppose that either $N(t_n)=1$ for all $n$ or $N(t_n)>1$ for all $n$.\\
    \indent First suppose that $N(t_n)=1$ for all $n$. By Lemma \ref{exactlyone}, we may pass to another subsequence such that either the coefficient $a_1$ for $u_{t_n}$ at the obtuse vertex of $T_{t_n}$ vanishes for all $n$ or $u_{t_n}$ vanishes at an acute vertex for all $n$. In the first case, by continuity, the coefficient $a_1$ for $u_{t}$ vanishes at the obtuse vertex of $T_t$. By Lemma \ref{zerocoeffthencp}, $u_{t}$ has a critical point on the longest edge of $T_t$, so $t\in S$. In the second case, by continuity, $u_{t}$ vanishes at an acute vertex. By Lemma \ref{morse}, $u_{t}$ has a local extremum that is not equal to this acute vertex. If one of these local extrema is the obtuse vertex, then Lemmas \ref{vertexmax} and \ref{zerocoeffthencp} imply that $t\in S$. If one of these local extrema is not the obtuse vertex, then $u$ has a non-vertex local extremum that is therefore a critical point, so $t\in S$.\\
    \indent Now suppose that $N(t_n)>1$ for all $n$. By Lemma \ref{twoimpliestwoedges}, for each $n$, there exist critical points $p_n\in e$ and $q_n\in e'$ of $u_{t_n}$ for some pair of distinct edges $e$ and $e'$. Passing to a subsequence if necessary, we may suppose that $\{p_n\}$ converges to a point $p$ in the closure of $e$ and that $\{q_n\}$ converges to a point $q$ in the closure of $e'$. If either of $p$ or $q$ is not equal to a vertex, then this point is a critical point of $u_{t}$ by continuity, and $t\in S$. By Lemma \ref{convtosamevertex}, $p\neq q$. By Lemma \ref{convergethenzero}, if $p\neq q$ are both acute vertices, then $u_{t}$ vanishes at both of these vertices by Lemma \ref{convergethenzero}, contradicting Lemma \ref{nodalsimplearc}. Thus, if $p$ and $q$ are both vertices of $T_t$, then one of them is the obtuse vertex. In this case, Lemma \ref{convergethenzero} implies that $a_1=0$ at the obtuse vertex for $u_{t}$, and Lemma \ref{zerocoeffthencp} in turn implies that $u_{t}$ has a critical point, so $t\in S$. We have therefore established that second Neumann eigenvalues of obtuse hyperbolic triangles have no non-vertex critical points. \\
    \indent Now, note that by Lemma \ref{zeroatacute}, if $u$ is a second Neumann eigenfunction of a non-acute hyperbolic triangle $T$, then $u$ cannot vanish at an acute vertex of $T$. Let $e$ be the longest edge of $T$, and let $L_e$ be a loxodromic Killing field whose axis contains $e$. By Lemma \ref{derivativeneumannvertex}, $\Zcal(L_eu)$ does not contain an arc with an endpoint at either acute vertex. Since $u$ has no critical points, $\Zcal(L_eu)$ also does not have an endpoint in the interior of any edge of $T$. By Lemma \ref{neumannloop}, therefore, $L_eu$ does not vanish in $T$. Replacing $L_e$ by $-L_e$ if necessary, we have $L_eu>0$ in $T$.
\end{proof}

\begin{proof}[Proof of Corollary \ref{simple}]
    Fix an acute vertex $v$ of $T$. By the argument in the last paragraph of the proof of Theorem \ref{mainthm}, the map sending a second Neumann eigenfunction of $T$ to its value at $v$ is an injective linear map from the second Neumann eigenspace to $\Cbb$. The corollary follows from this. 
\end{proof}

\end{document}